\DeclareMathOperator{\Graph}{Graph}
\newcommand{\innerP}[2]{\left\langle {#1},{#2} \right\rangle}
\newcommand{\dist}{\mathrm{dist}}
\newcommand{\dom}{\mathrm{dom}}
\newcommand{\epi}{\mathrm{epi}}
\newcommand{\prox}{\mathrm{prox}}
\newcommand{\Liminf}{\mathop{\lim\inf}}
\newcommand{\One}{\mathbf{1}}
\newcommand{\Prob}[1]{\mathbb{P}\left[ #1 \right]}
\newcommand{\StatementNum}[1]{{\upshape(\romannumeral #1 )}}
\newcommand{\Vxy}{\begin{bmatrix} x\\y \end{bmatrix}}
\newtheorem{assumption}{Assumption}
\theoremstyle{thmstyleone}%
\newtheorem{theorem}{Theorem}[section]
\newtheorem{lemma}[theorem]{Lemma}
\newtheorem{corollary}[theorem]{Corollary}
\newtheorem{proposition}[theorem]{Proposition}%
\theoremstyle{thmstyletwo}%
\newtheorem{remark}{Remark}%
\theoremstyle{thmstylethree}%
\newtheorem{definition}{Definition}%
\begin{document}

\title{An equivalent reformulation and multi-proximity gradient algorithms for a class of nonsmooth fractional programming}


\author[1]{\fnm{Junpeng} \sur{Zhou}}\email{zhoujp5@mail2.sysu.edu.cn}
\equalcont{These authors contributed equally to this work.}

\author[2]{\fnm{Na} \sur{Zhang}}\email{nzhsysu@gmail.com}
\equalcont{These authors contributed equally to this work.}

\author*[3]{\fnm{Qia} \sur{Li}}\email{liqia@mail.sysu.edu.cn}

\affil*[1]{\orgdiv{School of Computer Sciences and Engineering}, \orgname{Sun Yat-sen University}, \orgaddress{ \city{Guangzhou}, \postcode{510006}, \country{China}}}

\affil[2]{\orgdiv{Department of Applied Mathematics, College of Mathematics and Informatics}, \orgname{South China Agricultural University}, \orgaddress{ \city{Guangzhou}, \postcode{510642},  \country{China}}}

\affil[3]{\orgdiv{School of Computer Sciences and Engineering, Guangdong Province Key Laboratory of Computational Science}, \orgname{Sun Yat-sen University}, \orgaddress{ \city{Guangzhou}, \postcode{510006},  \country{China}}}



\abstract{In this paper, we consider a class of structured fractional programs, where the numerator part is the sum of a block-separable (possibly nonsmooth nonconvex) function and a locally Lipschitz differentiable (possibly nonconvex) function, while the denominator is a convex (possibly nonsmooth) function. We first present a novel reformulation for the original problem and show the relationship between optimal solutions, critical points and KL exponents of these two problems. Inspired by the reformulation, we propose a flexible framework of multi-proximity gradient algorithms (MPGA), which computes the proximity operator with respect to the Fenchel conjugate associated with the convex denominator of the original problem rather than evaluating its subgradient as in the existing methods. Also, MPGA employs a nonmonotone linear-search scheme in its gradient descent step, since the smooth part in the numerator of the original problem is not globally Lipschitz differentiable. Based on the framework of MPGA, we develop two specific algorithms, namely, cyclic MPGA and randomized MPGA, and establish their subsequential convergence under mild conditions. Moreover, the sequential convergence of cyclic MPGA with the monotone line-search (CMPGA\_ML) is guaranteed if the extended objective associated with the reformulated problem satisfies the Kurdyka-{\L}ojasiewicz (KL) property and some other mild assumptions. In particular, we prove that the corresponding KL exponents are $\frac{1}{2}$ for several special cases of the fractional programs, and so, CMPGA\_ML exhibits a linear convergence rate. Finally, some preliminary numerical experiments are performed to demonstrate the efficiency of our proposed algorithms.}

\keywords{fractional programming, equivalent reformulation, proximal algorithm, KL exponent}


\pacs[MSC Classification]{90C26, 90C30, 65K05}

\maketitle

\section{Introduction}\label{section:introduction}
In this paper, we consider a class of single-ratio fractional optimization problem
\begin{equation}\label{problem:root}
	\min\; \left\{\frac{f(x)+h(x)}{g(x)}:x\in\Omega \right\},
\end{equation}
where $f,g,h: \mathbb{R}^{n}\to (-\infty,+\infty]$ are proper lower semicontinuous functions and the set $\Omega := \{x\in\mathbb{R}^n:g(x)\neq 0 \}$ is nonempty. Moreover, we assume that $f$ is a block-separable function
\begin{equation*}
	f(x) = \sum_{i=1}^{N} f_i(x_i),
\end{equation*}
where each $x_i$ is a subvector of $x$ with dimension $n_i$, $\{x_i:i=1,2,...,N \} $ forms a partition of the components of $x$, and each $f_i:\mathbb{R}^{n_i}\to (-\infty,+\infty]$ is a proper lower semicontinuous function. Throughout the paper, we make the following blanket assumptions on problem \eqref{problem:root}.
\begin{assumption}\label{assumption0}
	\indent
	\begin{enumerate}[label = {\upshape(\roman*)}]
		\item $f$ is continuous on $\dom(f)$ and each $f_i$ is bounded below;
		\item $h$ is locally Lipschitz differentiable on $\Omega$, i.e., for any $x\in\Omega$, there exist a neighborhood $\mathcal{B}(x)\subseteq\mathbb{R}^n$ and a constant $L_x$ such that $\|\nabla h(u)-\nabla h(v)\|_2\leq L_x\|u-v\|_2$ holds for any $u,v\in\mathcal{B}(x)\cap \Omega$.
		\item $g$ is convex, real-valued and non-negative on $\mathbb{R}^n$.
		\item $f+h$ is non-negative on $\dom(f)$.
	\end{enumerate}
	
\end{assumption}

This class of fractional optimization problems encompasses many important optimization models arising from diverse areas. In this paper, we are particularly interested in the scale invariant sparse signal recovery models, which has recently received considerable attentions \cite{Li-Shen-Zhang-Zhou:2022ACHA,Rahimi-Wang-Dong-Lou:2019SIAM-SC,TaoMin:2022SIAM-SC,Wang-Tao-Nagy-Lou:2021SIAM-ImageScience,Wang-Yan-Rahimi-Lou:2020IEEE,Yin-Esser-Xin:2014CIS, Zeng-Yu-Pong:2021SIAM-OPT}. For example, the $L_1/L_2$ (the quotient of the $\ell_1$ and $\ell_2$ norms) least square model is in the form of
\begin{equation}\label{problem:L1dL2}
	\min\left\{\frac{\|x\|_1}{\|x\|_2}+\frac{\lambda}{2}\|Ax-b\|^2_2:\underline{x}\leq x \leq \overline{x}, x\neq 0, x\in\mathbb{R}^n  \right\},
\end{equation}
where $A\in\mathbb{R}^{m\times n}$, $b\in\mathbb{R}^m\backslash\{0 \}$, $\underline{x},\overline{x}\in\mathbb{R}^n$ and $\lambda >0$. Problem \eqref{problem:L1dL2} is a special case of problem \eqref{problem:root} when $h(x)= \frac{\lambda}{2}\|x\|_2\|Ax - b\|_2^2$, $g(x)=\|x\|_2$, $f_i$ is the sum of $\ell_1$ norm and the indicator function on $\{x_i\in\mathbb{R}^{n_i}: \underline{x}_i\leq x_i \leq \overline{x}_i  \}$ for $i = 1,2,...,N$. One can easily check that problem \eqref{problem:L1dL2} satisfies \Cref{assumption0}. As another example, we refer to the $L_1/S_K$ (the quotient of $\ell_1$ norm and the vector $K$-norm) sparse signal recovery model \cite{Li-Shen-Zhang-Zhou:2022ACHA} which can be formulated into
\begin{equation}\label{problem:L1dSK}
	\min\left\{ \frac{\|x\|_1+\frac{\lambda}{2}\|Ax-b\|_2^2}{\|x\|_{(K)}} : \underline{x}\leq x\leq \overline{x},x\neq 0, x\in\mathbb{R}^n \right\}.
\end{equation}
In the above problem, $\|x\|_{(K)}$ is the vector $K$-norm \cite{Wu-Ding-Sun-Toh:2014SIAMOPT, Gotoh-Takeda-Tono:2018Mathematical_Programming} of $x$, which is defined as the sum of the $K$ largest absolute values of entries in $x$. Clearly, problem \eqref{problem:root} reduces to problem \eqref{problem:L1dSK} when $h(x)=\frac{\lambda}{2}\|Ax -b\|^2_2$, $g(x) = \|x \|_{(K)}$ and $f_i$ is chosen the same as those in the prior example. We can also easily verify that problem \eqref{problem:L1dSK} satisfies \Cref{assumption0}.

For tackling single-ratio fractional optimization problems, one popular class of approaches is the Dinkelbach's method and its variants \cite{Crouzeix-Ferland-Schaible:1985JOTA,Dinkelbach-Werner:MS1967,Ibaraki:1983Mathematical_Programming,Schaible:1976Fractional_Programming}. Given an iterate $x^k$, this class of approaches for problem \eqref{problem:root} typically generates the next iterate by solving the following optimization problem
\begin{equation}\label{eq: z0725 1722}
	\min\{f(x)+h(x)-c_k g(x) :x\in\mathbb{R}^n \},
\end{equation}
where $c_k$ is renewed with $c_k:=\frac{f(x^k)+h(x^k)}{g(x^k)}$.  In general, it is very difficult and expensive to directly address \eqref{eq: z0725 1722} due to the possible non-convexity and non-smoothness. To remedy this issue, when $h$ is globally Lipschitz differentiable, several first-order algorithms \cite{Bot-Dao-Li:2021MathematicsofOperationsResearch,Li-Shen-Zhang-Zhou:2022ACHA,NaZhang-QiaLi:2022SIAM-OPT} are developed for solving problem \eqref{problem:root} by processing $f$, $h$ and $g$ separately. More precisely, in each iteration all these algorithms perform a gradient step with respect to $h$, a subgradient step with respect to $g$ and a proximal step with respect to $f$. The subproblems involved are usually much easier to handle and sometimes even have closed-form solutions, which leads to the high efficiency of these algorithms.
Subsequential convergence of the solution sequence generated by them is established, while sequential convergence is proved under further assumptions which includes the Kurdyka-{\L}ojasiewicz (KL) property of some auxiliary functions.

Differently from the aforementioned work \cite{Bot-Dao-Li:2021MathematicsofOperationsResearch, Li-Shen-Zhang-Zhou:2022ACHA,NaZhang-QiaLi:2022SIAM-OPT}, this paper presents a novel reformulation for problem \eqref{problem:root}, and then develops multi-proximity  algorithms based on this reformulated problem, which is in the form of
\begin{equation}\label{problem:root primal-dual}
	\min\; \left\{\frac{f(x)+h(x)}{\innerP{x}{y}-g^*(y)}:
	(x,y)\in\mathbb{R}^n\times \mathbb{R}^n, \innerP{x}{y}-g^*(y)>0
	\right\}.
\end{equation}
Here, $g^*:\mathbb{R}^n\to(-\infty,+\infty]$ denotes the classical Fenchel conjugate function of $g$. Specifically, the contributions of this paper are summarized as follow:
\begin{enumerate}[leftmargin=0.5cm, itemindent=0cm]
	\item We give an equivalent reformulation \eqref{problem:root primal-dual} for problem \eqref{problem:root} in the sense that both problems have the same optimal value, and $x^{\star}\in\mathbb{R}^n$ is an optimal solution of problem \eqref{problem:root} if and only if $(x^{\star},y^{\star}) $ is an optimal solution of problem \eqref{problem:root primal-dual} for some $y^{\star}\in\mathbb{R}^n $. Moreover, when $g^*$ is continuous on its domain, we show that $x^{\star}$ is a critical point of problem \eqref{problem:root} if and only if $(x^{\star},y^{\star}) $ is a stationary point of problem \eqref{problem:root primal-dual} for some $y^{\star}$. Under further mild conditions, we prove that if the extended objective of problem \eqref{problem:root primal-dual} is a KL function with a certain KL exponent, then the extended objective of problem \eqref{problem:root} is also a KL function with the same KL exponent.
	\item By exploiting the structure of the numerator and denominator, we propose a general framework of multi-proximity gradient algorithms for solving problem \eqref{problem:root primal-dual}. At each iteration, this method first picks a block from $\{x_i: i=1,2,...,N\}$ and $y$, and then solves typically a proximal subproblem associated with the chosen block while fixing the remaining blocks at their last renewed values. Moreover, since $h$ is not globally Lipschitz differentiable, a nonmonotone line-search scheme is incorporated to determine the step size of gradient descent if the block $y$ is not selected. We prove that the MPGA always owns several favorable properties, regardless of the order for updating the blocks.
	\item We investigate the convergence of the MPGA in the case where the update block is chosen cyclically (CMPGA) or randomly (RMPGA). We show that any accumulation point of the solution sequence generated by CMPGA is a critical point of problem \eqref{problem:root}, while that by RMPGA is a critical point almost surely. In addition, we establish the convergence of the whole solution sequence generated by CMPGA with monotone line search (CMPGA\_ML), by further assuming that $f$ is locally Lipschitz continuous, $g^*$ satisfies the calmness condition on its domain, and the extended objective of problem \eqref{problem:root primal-dual} satisfies the KL property. The convergence rate of CMPGA\_ML is also estimated based on the KL exponent of the extended objective of problem \eqref{problem:root primal-dual}. Specifically, we show that, for several special cases of problem \eqref{problem:root}, the KL exponents associated with their reformulated problem \eqref{problem:root primal-dual} are $\frac{1}{2}$, and thus CMPGA\_ML exhibits a linear convergence rate.
\end{enumerate}

The remaining part of this paper is organized as follows. In \Cref{section:Notation_and_preliminaries}, we introduce notation and some preliminaries. \Cref{section: Connections between primal and primal-dual problems} is devoted to a study of connections between problem \eqref{problem:root} and \eqref{problem:root primal-dual}. We propose the multi-proximity gradient algorithm in \Cref{section: MPGA} and analyze its subsequential convergence in Section \ref{section: Subsequential convergence analysis}. In \Cref{section: Global convergence}, the sequential convergence and convergence rate of CMPGA\_ML is established. We present in \Cref{section: Numerical experiments} some numerical results for the $L_1/L_2$ and $L_1/S_K$ signal recovery problems to demonstrate the efficiency of our proposed algorithms. 

\section{Notation and preliminaries}\label{section:Notation_and_preliminaries}

We begin with our preferred notations. We denote the Euclidean space of dimension $n$ and the set of nonnegative integers by $\mathbb{R}^n$ and $\mathbb{N}$. Let $\mathbb{N}_M:= \{1,2,...,M \}$, $\mathbb{N}_M^0 := \{0 \}\cup \mathbb{N}_M$ for a positive integer $M$, and $[x]_+ :=\max \{0,x\}$. For a vector $x\in\mathbb{R}^n$, we use $\mathcal{B}(x)$ to denote an open neighborhood of $x$ in $\mathbb{R}^n$, and use $\mathcal{B}(x,\delta)$ to denote an open ball in $\mathbb{R}^n$ with the center $x$ and the radius $\delta>0$. Given a function $\varphi:\mathbb{R}^n\to(-\infty,+\infty]$ being finite at $x$ and a positive number $\epsilon>0$, we use $\mathcal{B}_{\varphi}^{\epsilon}(x,\delta)$ to denote the set $\mathcal{B}(x,\delta)\cap \{z\in\mathbb{R}^n:\varphi(x)<\varphi(z)<\varphi(x)+\epsilon \}$. Moreover, for a vector $x\in\mathbb{R}^n$, $x_i$ always denotes a subvector of $x$ with cardinality $n_i$ and $\{x_i:i\in\mathbb{N}_N \} $ forms a partition of the components of $x$. We use $\nabla_i h$ to denote the partial gradient of $h$ with respect to $x_i$, $i\in\mathbb{N}_N$. The $\ell_1$ norm, the $\ell_2$ norm and the inner-product of $\mathbb{R}^n$ are denoted by $\|\cdot\|_1,~\|\cdot\|_2$ and $\innerP{\cdot}{\cdot}$, respectively. We use $x\in \mathcal{A}\to x^{\star}$ to denote that the variant $x$ converges to $x^{\star}$ within the set $\mathcal{A}$ and use $\{x^k:k\in\mathcal{K}\}\to x^{\star}$ to denote that the sequence $x^k$ indexed by $\mathcal{K}$ converges to $x^{\star}$, i.e., $\lim_{k\in\mathcal{K}\to\infty} x^k = x^{\star}$. The indicator function on a nonempty set $\mathcal{A}\subseteq\mathbb{R}^n$ is defined by
\begin{equation*}
	\iota_{\mathcal{A}}(x) = \begin{cases}
		0, & \text{if~~}x\in\mathcal{A},\\
		+\infty, &\text{else}.
	\end{cases}
\end{equation*}
For two sets $\mathcal{A}_1$ and $\mathcal{A}_2$, $\mathcal{A}_1 \times \mathcal{A}_2$ denotes the Cartesian product of $\mathcal{A}_1$ and $\mathcal{A}_2$. The distance from a point $x\in\mathbb{R}^n$ to a set $\mathcal{A}\subseteq \mathbb{R}^n$ is denoted by $\dist(x,\mathcal{A}) := \inf\{\|x-y\|_2:y\in\mathcal{A} \}$, and we adopt $\dist(x,\emptyset) = +\infty$.

In the remaining part of this section, we present some preliminaries on some generalized subdifferentials of nonconvex functions \cite{Boris:2006Variational_analysis, Rockafellar2004Variational} and the KL property \cite{Attouch-bolt-redont-soubeyran:2010}.


\subsection{Generalized subdifferentials}

An extended-real-value function $\varphi:\mathbb{R}^n\to (-\infty,+\infty]$ is said to be proper if its domain $\dom(\varphi) := \{x\in\mathbb{R}^n: \varphi(x)<+\infty \}$ is nonempty. A proper function $\varphi$ is said to be closed if $\varphi$ is lower semicontinuous on $\mathbb{R}^n$. For a proper function $\varphi$, its Fr{\'e}chet and limiting subdifferential at $x\in\dom(\varphi)$ are defined respectively by
\begin{align*}
	\widehat{\partial} \varphi (x)&:=\left\{ y\in\mathbb{R}^n:\mathop{\lim\inf}\limits_{\substack{z\to x\\z\neq x}}\;
	\frac{\varphi(z)-\varphi(x)-\innerP{y}{z-x}}{\|z-x\|_2}\geq 0
	\right\},\\
	{\partial}\varphi(x) &:= \left\{ y\in\mathbb{R}^n:\exists x^k\to x,~\varphi(x^k)\to\varphi(x),~ y^k\in\widehat{\partial}\varphi(x^k) \text{ with } y^k\to y \right\}.
\end{align*}
We define $\dom(\partial \varphi):= \{x\in\dom(\varphi):\partial \varphi(x)\neq \emptyset \}$. A vector $x^{\star}\in\dom(\varphi)$ is said to be a stationary point of $\varphi$ if $0\in \widehat{\partial}\varphi (x^{\star})$. It is straightforward to verify that $\widehat{\partial}\varphi(x)\subseteq {\partial}\varphi(x)$, $\widehat{\partial}(\alpha \varphi)(x) = \alpha \widehat{\partial}\varphi(x)$ and $\partial(\alpha \varphi)(x) = \alpha \partial\varphi(x)$ hold for any $x\in\dom(\varphi)$ and $\alpha >0$. If $\varphi$ is convex, Fr{\'e}chet subdifferential and limiting subdifferential coincide with the classical subdifferential at any $x\in\dom(\varphi)$ (\cite[Proposition 8.12]{Rockafellar2004Variational}), i.e.,
\begin{equation*}
	\widehat{\partial} \varphi(x) = \partial\varphi(x) = \{y\in\mathbb{R}^n:\varphi(z)-\varphi(x)-\innerP{y}{z-x}\geq0,\forall z\in\mathbb{R}^n \}.
\end{equation*}
It is known that $\widehat{\partial} \varphi (x) = \{\nabla\varphi(x) \}$ if $\varphi$ is differentiable at $x$. We say $\varphi$ is continuously differentiable at $x$, if $\varphi$ is differentiable on some $\mathcal{B}(x)$ and $\nabla\varphi$ is continuous at $x$. It can be verify that $\partial\varphi(x) =\{\nabla\varphi(x) \}$ holds if $\varphi$ is continuously differentiable at $x$. In addition, there are some useful calculus results on Fr{\'e}chet subdifferential. Let proper functions $\varphi_1,~\varphi_2:\mathbb{R}^n\to(-\infty,+\infty]$ be closed, and $x\in\dom(\varphi_1)\cap\dom(\varphi_2)$, then $\widehat{\partial}\varphi_1(x) + \widehat{\partial}\varphi_2(x)\subseteq\widehat{\partial}(\varphi_1+\varphi_2)(x)$ (\cite[Corollary 10.9]{Rockafellar2004Variational}), where the equality holds if $\varphi_1$ or $\varphi_2$ is differentiable at $x$ (\cite[Exercise 8.8(c)]{Rockafellar2004Variational}). Note that, $\partial(\varphi_1+\varphi_2)(x)\subseteq \partial\varphi_1(x) + \partial\varphi_2(x)$ holds when $\varphi_1$ or $\varphi_2$ is locally Lipschitz continuous at $x$ (\cite[Exercise 10.10]{Rockafellar2004Variational}), and  holds with equality when $\varphi_1$ or $\varphi_2$ is continuously differentiable at $x$ (\cite[Exercise 10.10 and Theorem 9.13(a)(c)]{Rockafellar2004Variational}). Let proper functions $\varphi_i:\mathbb{R}^{n_i}\to(-\infty,+\infty]$, $i\in\mathbb{N}_N$, be closed, then for the block-separable function $\varphi(x) = \sum_{i=1}^{N} \varphi_i(x_i)$, there hold $\widehat{\partial}\varphi(x) = \widehat{\partial}\varphi_1(x_1)\times \widehat{\partial}\varphi_2(x_2)\times ... \times \widehat{\partial}\varphi_N(x_N)$ and $\partial\varphi(x) = \partial\varphi_1(x_1)\times \partial\varphi_2(x_2)\times ... \times \partial\varphi_N(x_N)$ at any $x=(x_1,x_2,...,x_N)\in \dom(\varphi_1)\times ... \times \dom(\varphi_N)$ (\cite[Proposition 10.5]{Rockafellar2004Variational}).

Next we review quotient rules for the Fr{\'e}chet subdifferential of $\varphi_1/\varphi_2$.
To this end, we first assume $\dom(\varphi_2) = \mathbb{R}^n$ and introduce two functions related to the ratio of $\varphi_1$ and $\varphi_2$. The first one is $\tau:\mathbb{R}^n\to (-\infty,+\infty]$ defined at $x\in\mathbb{R}^n$ as
\begin{equation*}
	\tau(x) := \begin{cases}
		\frac{\varphi_1(x)}{\varphi_2(x)}, &\text{ if $x\in\dom(\varphi_1)$ and $\varphi_2(x)\neq 0$,}\\
		+\infty, &\text{else.}\end{cases}
\end{equation*}
The second one is $\rho:\mathbb{R}^n\times\mathbb{R}^n\to (-\infty,+\infty]$ defined at $(x,y)\in\mathbb{R}^n\times\mathbb{R}^n$ as
\begin{equation*}
	\rho(x,y) := \begin{cases}
		\frac{\varphi_1(x)}{\innerP{x}{y}-\varphi_2^*(y)}, &\text{if $(x,y)\in\dom(\varphi_1)\times\dom(\varphi_2^*)$ and $\innerP{x}{y}-\varphi_2^*(y)>0$},\\
		+\infty, &\text{else,}
	\end{cases}
\end{equation*}
where $\varphi_2^*$ is the convex conjugate of $\varphi_2$, i.e., $\varphi_2^*(y) := \sup\{\innerP{x}{y}-\varphi_2(y): x\in\mathbb{R}^n \}$.
We also need the concept of calmness condition \cite[Section 8.F]{Rockafellar2004Variational}. The function $\varphi:\mathbb{R}\to(-\infty,+\infty]$ is said to satisfy the calmness condition at $x\in\dom(\varphi)$ relative to $\mathcal{A}\subseteq \mathbb{R}^n$, if there exists $\kappa_x>0$ and a neighborhood $\mathcal{B}(x)$ of $x$, such that $|\varphi(u)-\varphi(x)| \leq \kappa_x \|u-x\|_2$ holds for any $u\in\mathcal{B}(x)\cap\mathcal{A}$. We say $\varphi$ satisfies the calmness condition on $\mathcal{A}$, if $\varphi$ satisfies the calmness condition at each point in $\mathcal{A}$ relative to $\mathcal{A}$. The following proposition concerns the Fr{\'e}chet subdifferentials of $\tau$ and $\rho$.

\begin{proposition}\label{ppsition:2.2}
	Let $(x,y)\in\dom(\varphi_1)\times\dom(\varphi_2^*)$ and $a_1=\varphi_1(x)$, $a_2=\varphi_2(x)$, $a_3=\innerP{x}{y}-\varphi_2^*(y)$. Suppose that $\varphi_1$ is continuous at $x$ relative to $\dom(\varphi_1)$, then the following two statements hold.
	\begin{enumerate}[label = {\upshape(\roman*)}]
		\item \cite[Proposition 2.2]{NaZhang-QiaLi:2022SIAM-OPT} If $a_2>0$ and $\varphi_2$ satisfies the calmness condition at $x$, then
		\begin{equation*}
			\widehat{\partial}\tau (x) = \frac{\widehat{\partial}(a_2\varphi_1-a_1\varphi_2)(x)}{a^2_2};
		\end{equation*}
		\item \cite[Proposition 2.3]{Li-Shen-Zhang-Zhou:2022ACHA} If $a_1>0$, $a_3>0$ and $\varphi_2^*$ satisfies the calmness condition at $y$ relative to $\dom(\varphi_2^*)$, then
		\begin{equation*}
			\widehat{\partial}\rho(x,y) = \frac{a_3\widehat{\partial}\varphi_1(x)-a_1y}{a^2_3}
			\times
			\frac{\widehat{\partial}(a_1\varphi_2^*)(y)-a_1x}{a^2_3}.
		\end{equation*}
	\end{enumerate}
\end{proposition}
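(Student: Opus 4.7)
The plan is to reduce each Fréchet subdifferential identity to a statement about an auxiliary function that vanishes at the reference point, exploiting the calmness hypothesis to discard second-order error terms in the definition of $\widehat{\partial}$.

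For part (i), I would begin by writing
\[
\tau(z)-\tau(x)=\frac{a_{2}\varphi_{1}(z)-a_{1}\varphi_{2}(z)}{a_{2}\varphi_{2}(z)}=\frac{\psi(z)-\psi(x)}{a_{2}\varphi_{2}(z)},\qquad \psi:=a_{2}\varphi_{1}-a_{1}\varphi_{2},
\]
since $\psi(x)=a_{2}a_{1}-a_{1}a_{2}=0$. For an arbitrary $u\in\mathbb{R}^{n}$, the decomposition
\[
\tau(z)-\tau(x)-\innerP{u}{z-x}=\frac{\psi(z)-\innerP{a_{2}^{2}u}{z-x}}{a_{2}\varphi_{2}(z)}+\frac{(a_{2}-\varphi_{2}(z))\innerP{u}{z-x}}{\varphi_{2}(z)}
\]
is what I would exploit. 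Calmness of $\varphi_{2}$ at $x$ forces $|a_{2}-\varphi_{2}(z)|\leq \kappa_{x}\|z-x\|_{2}$, so the second summand is $O(\|z-x\|_{2}^{2})$; continuity of $\varphi_{1}$ relative to $\dom(\varphi_{1})$ together with $a_{2}>0$ keeps $a_{2}\varphi_{2}(z)$ bounded away from zero and converging to $a_{2}^{2}$. Dividing through by $\|z-x\|_{2}$ and taking $\liminf$ as $z\to x$ inside $\dom(\varphi_{1})$ therefore yields the equivalence $u\in\widehat{\partial}\tau(x)\iff a_{2}^{2}u\in\widehat{\partial}\psi(x)$, which is the stated identity.

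For part (ii), I would introduce
\[
\psi(z,w):=a_{3}\varphi_{1}(z)-a_{1}\innerP{z}{w}+a_{1}\varphi_{2}^{*}(w),\qquad \xi(z,w):=\innerP{z}{w}-\varphi_{2}^{*}(w),
\]
so that $\psi(x,y)=0$ and $\rho(z,w)-\rho(x,y)=\psi(z,w)/(a_{3}\xi(z,w))$. The bilinear term $-a_{1}\innerP{z}{w}$ is continuously differentiable with gradient $(-a_{1}w,-a_{1}z)$, while $a_{3}\varphi_{1}+a_{1}\varphi_{2}^{*}$ is block-separable in $(z,w)$; invoking the Fréchet subdifferential sum rule for a continuously differentiable summand and the product formula for block-separable functions, both recalled in Section~\ref{section:Notation_and_preliminaries}, I would obtain
\[
\widehat{\partial}\psi(x,y)=\bigl(a_{3}\widehat{\partial}\varphi_{1}(x)-a_{1}y\bigr)\times\bigl(\widehat{\partial}(a_{1}\varphi_{2}^{*})(y)-a_{1}x\bigr).
\]
Then, mirroring part (i), I would use calmness of $\varphi_{2}^{*}$ at $y$ relative to $\dom(\varphi_{2}^{*})$ (which, via the smooth bilinear term, upgrades to calmness of $\xi$ at $(x,y)$ relative to $\mathbb{R}^{n}\times\dom(\varphi_{2}^{*})$) to show that replacing $a_{3}\xi(z,w)$ with its limiting value $a_{3}^{2}$ introduces only an $O(\|(z,w)-(x,y)\|_{2}^{2})$ error, giving $(u,v)\in\widehat{\partial}\rho(x,y)\iff a_{3}^{2}(u,v)\in\widehat{\partial}\psi(x,y)$.

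The main technical subtlety I expect is verifying that the ``error'' terms really do vanish in the $\liminf$: this hinges on coupling calmness (to bound $|\varphi_{2}(z)-a_{2}|$ or $|\xi(z,w)-a_{3}|$ linearly in $\|\cdot\|_{2}$) with positivity of $a_{2}$, respectively $a_{3}$, so that the denominators stay bounded below. A secondary delicate point in (ii) is that the $\liminf$ implicitly restricts to $w\in\dom(\varphi_{2}^{*})$ — this is automatic because $\rho(z,w)=+\infty$ otherwise, so the calmness hypothesis, stated only relative to $\dom(\varphi_{2}^{*})$, is exactly what the $\liminf$ sees.
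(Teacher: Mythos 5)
The paper offers no proof of \Cref{ppsition:2.2}: both items are imported verbatim, (i) from \cite[Proposition 2.2]{NaZhang-QiaLi:2022SIAM-OPT} and (ii) from \cite[Proposition 2.3]{Li-Shen-Zhang-Zhou:2022ACHA}, so there is no in-paper argument to compare against; your proposal supplies a self-contained proof along the standard route used in those references, and it is correct. The key pieces all work: the exact decomposition built on $\psi(x)=0$ (resp.\ $\psi(x,y)=0$); the calmness hypothesis, which bounds $|\varphi_2(z)-a_2|$ (resp.\ $|\xi(z,w)-a_3|$, after absorbing the smooth bilinear term) linearly in the distance to the base point, so the correction term is $O(\|z-x\|_2^2)$ and the denominator stays near $a_2^2>0$ (resp.\ $a_3^2>0$); the observation that this positivity also makes $\dom(\tau)$ (resp.\ $\dom(\rho)$) coincide locally with $\dom(\psi)$, so the two $\liminf$'s range over the same points; and, in (ii), the computation of $\widehat{\partial}\psi(x,y)$ via the sum rule with the differentiable term $-a_1\innerP{z}{w}$ and the separable product rule, both recalled in \Cref{section:Notation_and_preliminaries}, which yields exactly the two factors in the stated formula after division by $a_3^2$.

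Two minor points to tidy up. First, the lower bound keeping $a_2\varphi_2(z)$ away from zero comes from the calmness (hence continuity) of $\varphi_2$ at $x$, not from the continuity of $\varphi_1$ as you write; in fact neither the continuity of $\varphi_1$ nor, in (ii), the assumption $a_1>0$ is actually used in your argument -- they are hypotheses inherited from the cited statements, and carrying them is harmless. Second, the final step ``divide by $\|z-x\|_2$ and take $\liminf$'' hides a small but genuine manipulation: since $\liminf$ does not commute with multiplication by a variable factor, both implications need the one-line estimate that if the quotient (resp.\ the numerator ratio) has nonnegative $\liminf$ and the multiplier $a_2\varphi_2(z)$ (resp.\ $a_3\xi(z,w)$) converges to a positive constant, then the product again has nonnegative $\liminf$; spelling this out in both directions is what turns your sketch into a complete proof, but it is routine given the calmness bound.
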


The following proposition is about the limiting subdifferentials of $\tau$ and $\rho$.


\begin{proposition}\label{Corollary: limiting subdiff of frac}
	Suppose that $\varphi_1$ is closed and continuous on $\dom(\varphi_1)$. 
	\begin{enumerate}[label = {\upshape(\roman*)}]
		\item Let $x\in\dom(\tau)$ with $a_1 = \varphi_1(x)>0$ and  $a_2=\varphi_2(x)>0$. If $\varphi_2$ is locally Lipschitz continuous around $x$ and such that $\widehat{\partial}\varphi_2(x)$ is nonempty around this point,\footnote{These assumptions on $\varphi_2$ automatically hold when $\varphi_2 $ is convex and continuous around $x$.} then
		\begin{equation}\label{eq: subdiff of tau}
			\partial \tau(x) \subseteq \frac{1}{a_2}\left (\partial \varphi_1(x)-\frac{a_1}{a_2}\partial\varphi_2(x)\right ),
		\end{equation}
		and hence $\dom(\partial \tau) \subseteq \dom(\partial \varphi_1)$. Furthermore, the relation \eqref{eq: subdiff of tau} becomes an equality if $\varphi_2$ is continuously differentiable at $x$.
		\item Let $(x,y)\in\dom(\rho)$ with $a_1 =\varphi_1 (x)>0$ and $a_3 =\innerP{x}{y}-\varphi_2^*(y) > 0$. If $\varphi_2^*$ satisfies the calmness condition around $x$ relative to $\dom(\varphi_2^*)$, then
		\begin{equation*}
			\partial\rho(x,y) = \frac{1}{a_3}\left ( 
			\left (\partial\varphi_1(x)-\frac{a_1}{a_3}y \right)
			\times
			\frac{a_1}{a_3}(\partial\varphi_2^*(y)-x)
			\right ),
		\end{equation*}
		and hence $\dom(\partial \rho) = \dom(\partial \varphi_1)\times\dom(\partial \varphi_2^*)$.
	\end{enumerate}
\end{proposition}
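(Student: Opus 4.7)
The plan is to reduce each assertion about the limiting subdifferential to the corresponding Fr{\'e}chet formula in Proposition~\ref{ppsition:2.2} by selecting a defining sequence for a generic element of $\partial\tau$ (respectively $\partial\rho$) and passing to the limit. The continuity of $\varphi_1$ on $\dom(\varphi_1)$ makes the value-convergence requirement built into the definition of the limiting subdifferential automatic along any such sequence, and continuity of $\varphi_2$ (respectively calmness of $\varphi_2^*$) plays the same role for the remaining ingredients.

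For part (i), I would fix $v\in\partial\tau(x)$ with defining sequences $x^k\to x$ and $v^k\in\widehat{\partial}\tau(x^k)$, $v^k\to v$. Continuity forces $\varphi_1(x^k),\varphi_2(x^k)>0$ for large $k$, and the local Lipschitz assumption on $\varphi_2$ supplies the calmness required to invoke Proposition~\ref{ppsition:2.2}(i) at each $x^k$, producing $v^k\varphi_2(x^k)^2\in\widehat{\partial}(\varphi_2(x^k)\varphi_1-\varphi_1(x^k)\varphi_2)(x^k)$. The critical maneuver is to exploit the standing hypothesis that $\widehat{\partial}\varphi_2$ is nonempty near $x$: picking any $q^k\in\widehat{\partial}\varphi_2(x^k)$ and applying the Fr{\'e}chet sum-rule inclusion $\widehat{\partial}\varphi_1(x^k)+\widehat{\partial}\varphi_2(x^k)\subseteq\widehat{\partial}(\varphi_1+\varphi_2)(x^k)$ to absorb the term $\varphi_1(x^k)\varphi_2$ back in, I obtain $p^k:=v^k\varphi_2(x^k)+\tau(x^k)q^k\in\widehat{\partial}\varphi_1(x^k)$. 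Local Lipschitzness of $\varphi_2$ forces $\{q^k\}$ to be bounded, so I can extract $q^{k_j}\to q\in\partial\varphi_2(x)$, and then $p^{k_j}\to p:=v a_2+(a_1/a_2)q\in\partial\varphi_1(x)$; solving for $v$ gives the claimed inclusion, and $\dom(\partial\tau)\subseteq\dom(\partial\varphi_1)$ drops out at once. For the equality when $\varphi_2$ is $C^1$, I would run the argument in reverse: start from $p\in\partial\varphi_1(x)$ with sequence $p^k\in\widehat{\partial}\varphi_1(x^k)$, define $v^k:=(p^k-\tau(x^k)\nabla\varphi_2(x^k))/\varphi_2(x^k)$, and use the equality form of the Fr{\'e}chet sum rule together with Proposition~\ref{ppsition:2.2}(i) to place $v^k\in\widehat{\partial}\tau(x^k)$, whose limit is the desired $v$.

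Part (ii) follows the same template, with the product structure trivialising the algebra. For the inclusion $\subseteq$, defining sequences $(x^k,y^k)\to(x,y)$ and $(u^k,w^k)\in\widehat{\partial}\rho(x^k,y^k)$ with Proposition~\ref{ppsition:2.2}(ii) at $(x^k,y^k)$ directly yield $p^k:=u^k a_3(x^k,y^k)+\rho(x^k,y^k)y^k\in\widehat{\partial}\varphi_1(x^k)$ and $q^k:=w^k a_3(x^k,y^k)^2/\varphi_1(x^k)+x^k\in\widehat{\partial}\varphi_2^*(y^k)$; no auxiliary subgradient is required this time because $\varphi_2^*$ already enters the Fr{\'e}chet formula for $\rho$ through its own subdifferential. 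Passing to the limit (continuity of $\varphi_1$ for the first component; calmness of $\varphi_2^*$ at $y$, which yields $\varphi_2^*(y^k)\to\varphi_2^*(y)$, for the second) produces $p\in\partial\varphi_1(x)$, $q\in\partial\varphi_2^*(y)$ and the stated formula. For the reverse inclusion $\supseteq$, I would take $p,q$ in the respective limiting subdifferentials with independent defining sequences $(x^k,p^k)$ and $(y^k,q^k)$, assemble $(u^k,w^k)$ by the Proposition~\ref{ppsition:2.2}(ii) formula at $(x^k,y^k)$ so that $(u^k,w^k)\in\widehat{\partial}\rho(x^k,y^k)$ by that proposition, and verify $\rho(x^k,y^k)\to\rho(x,y)$ via the same continuity/calmness ingredients; the stated domain equality is then immediate.

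I expect the main obstacle to be part (i): the non-smoothness of $\varphi_2$ blocks a direct promotion of the Fr{\'e}chet quotient rule to its limiting form, and carefully justifying the use of the seemingly technical hypothesis that $\widehat{\partial}\varphi_2$ is nonempty around $x$---the very device that lets me produce the absorbing subgradient $q^k$ and rewrite the Fr{\'e}chet membership purely in terms of $\widehat{\partial}\varphi_1(x^k)$---is the substantive step to argue with care. Part (ii) is comparatively routine precisely because the Fenchel conjugate $\varphi_2^*$ enters $\rho$ additively in its own coordinate rather than coupled multiplicatively with $\varphi_1$.
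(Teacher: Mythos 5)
Your proposal is correct and takes essentially the same route as the paper: fix a defining sequence for a limiting subgradient of $\tau$ (resp.\ $\rho$), apply Proposition~\ref{ppsition:2.2} at each $x^k$ (resp.\ $(x^k,y^k)$), represent each Fr{\'e}chet subgradient through subgradients of $\varphi_1$ and $\varphi_2$ (resp.\ $\varphi_2^*$), use the boundedness of $\widehat{\partial}\varphi_2$ near $x$ coming from local Lipschitz continuity to extract a convergent subsequence, and pass to the limit via the continuity of $\varphi_1$ on its domain (resp.\ the calmness of $\varphi_2^*$). The only cosmetic differences are that you obtain the needed difference decomposition by absorbing the subtracted term through the elementary Fr{\'e}chet sum rule, where the paper instead cites the Fr{\'e}chet difference rule of Mordukhovich--Nam--Yen, and that you write out explicitly the reverse constructions for the equality case and for item (ii), which the paper omits as similar.
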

\begin{proof}
	Due to the continuity of $\varphi_1$ on $\dom(\varphi_1)$ and the Lipschitz continuity of $\varphi_2$ around $x$, there exists some neighborhood $\mathcal{B}(x)$ of $x$ such that $\varphi_1>0$ and $\varphi_2>0$ holds on $\mathcal{B}(x)\cap \dom(\tau)$, while $\widehat{\partial}\varphi_2$ is uniformly bounded on $\mathcal{B}(x)$ (see \cite[Theorem 3.52]{Boris:2006Variational_analysis}). Let $w\in\partial \tau(x)$. By the definition of limiting subdifferentials, there exists some $x^k\in\mathcal{B}(x)\cap\dom(\tau)\to x$ with $\tau(x^k)\to\tau(x)$, $w^k\in\widehat{\partial}\tau(x^k)$ and $w^k \to w$. Owing to $\varphi_1(x^k)>0$ and $\varphi_2(x^k)>0$, \Cref{ppsition:2.2} \StatementNum{1} indicates that
	\begin{equation}\label{eq: z0901 2330}
		\widehat{\partial}\tau (x^k) = \frac{\widehat{\partial}(\varphi_1-\tau(x^k)\varphi_2)(x^k)}{\varphi_2(x^k)}\subseteq \frac{1}{\varphi_2(x^k)}\Big (\widehat{\partial} \varphi_1(x^k)-\tau(x^k)\widehat{\partial}\varphi_2(x^k) \Big ),
	\end{equation}
	where the last relation is deduced by invoking \cite[Theorem 3.1]{Mordukhovich:2006Frechet_subdifferential} and noting that $\widehat{\partial} \varphi_2(x)$ is nonempty for the real-valued function $\varphi$.	 
	It follows from \eqref{eq: z0901 2330} that each $w^k\in \widehat{\partial}\tau (x^k)$ can be represented by 
	$w^k = \left(v_1^k - \tau(x^k)v^k_2 \right) / \varphi_2(x^k)$
	with some $(v^k_1, v^k_2)\in\widehat{\partial}\varphi_1(x^k)\times \widehat{\partial}\varphi_2(x^k)$. Since $x^k$ belongs to $\mathcal{B}(x)$, where $\widehat{\partial}\varphi_2$ is uniformly bounded, the boundedness of $\{ v_2^k: k\in\mathbb{N}\}$ yields some subsequence $v^{k_j}_2 \in \widehat{\partial}\varphi_2(x^{k_j})$ converging to some $v_2\in\partial\varphi_2(x)$. By passing to the limit with $j\to\infty$, we then obtain $v_1:=\lim_{j\to\infty} v^{k_j}_1 = \lim_{j\to\infty} \varphi_2(x^{k_j})w^{k_j}+\tau(x^{k_j})v^{k_j}_2 = a_2w+\frac{a_1}{a_2}v_2$. Besides, we have  $v_1\in\partial\varphi_1(x)$ due to $v^k_1\in\widehat{\partial}\varphi_1(x^k)$, $v^k_1\to v_1$, $x^k\to x$ and the continuity of $\varphi_1$ on $\dom(\varphi_1)$. Consequently, for any $w\in\partial\tau(x)$, there exists some $(v_1,v_2)\in \partial\varphi_1(x)\times \partial\varphi_2(x)$ such that $w = \frac{1}{a_2}(v_1-\frac{a_1}{a_2}v_2)$, and hence the relation \eqref{eq: subdiff of tau} holds. Specially when $\varphi_2$ is continuously differentiable around $x$, the relation \eqref{eq: subdiff of tau} holds with equality since \eqref{eq: z0901 2330} becomes equality. Item \StatementNum{2} is derived from \Cref{ppsition:2.2} \StatementNum{2}, while the proof is omitted here for the similarity to the proof of Item \StatementNum{1}.
\end{proof}

To end this subsection, we recall some useful properties on the subdifferential of a convex real-valued function. Let $\varphi:\mathbb{R}^n\to\mathbb{R}$ be convex. Then $\varphi$ is locally Lipschitz continuous \cite[Corollary 8.31, Theorem 8.29]{Bauschke-Combettes:11}, and $\cup_{x\in\mathcal{A}} \partial \varphi(x)$ is nonempty and bounded on any compact set $\mathcal{A}\subseteq \mathbb{R}^n$ (\cite[Proposition 5.4.2]{Bertsekas:Convexoptimizationtheory}). For a proper closed convex function $\varphi:\mathbb{R}^n\to(-\infty,+\infty]$, the conjugate $\varphi^*$ is also a proper closed convex function (\cite[Theorem 11.1]{Rockafellar2004Variational}) and $(\varphi^*)^* = \varphi$ (\cite[Theorem 4.8]{Beck-Amir:2017SIAM}). Moreover, it is known that the following equivalence holds (\cite[Proposition 11.3]{Rockafellar2004Variational}): $$\innerP{x}{y} = \varphi(x)+\varphi^*(y) \Leftrightarrow y\in\partial \varphi(x) \Leftrightarrow x\in\partial \varphi^*(y).$$


\subsection{KL property}

We now recall KL property, which has been used extensively in the convergence analysis of various first-order methods.


\begin{definition}[KL property and KL exponent \cite{Attouch-bolt-redont-soubeyran:2010}]\label{Def:KL_property}
	A proper function $\varphi:\mathbb{R}^n\to(-\infty,+\infty]$ is said to satisfy the KL property at $x\in\mathrm{dom}(\partial \varphi)$ if there exist $\epsilon\in(0,+\infty]$, $\delta>0$ and a continuous concave function $\phi:[0,\epsilon) \to \mathbb{R}_+:=[0,+\infty)$ such that:
	\begin{enumerate}[label = {\upshape(\roman*)}]
		\item $\phi(0)=0$;
		\item $\phi$ is continuously differentiable on $(0,\epsilon)$ with $\phi'>0$;
		\item For any $z\in \mathcal{B}_{\varphi}^{\epsilon}(x,\delta)$, there holds $\phi'(\varphi(z)-\varphi(x))\mathrm{~dist}(0,\partial\varphi(z)) \geq 1$.
	\end{enumerate}
	If $\varphi$ satisfies the KL property at $x\in\dom(\partial \varphi)$ and the $\phi$ can be chosen as $\phi(z) = a_0 z^{1-\theta}$ for some $a_0>0$ and $\theta\in [0,1)$, then we say that $\varphi$ satisfies the KL property at $x$ with the exponent $\theta$.
\end{definition}

A proper function $\varphi:\mathbb{R}^n\to (-\infty,+\infty]$ is called a KL function if it satisfies the KL property at any point in $\dom(\partial \varphi)$, and a proper function $\varphi$ satisfying the KL property with exponent $\theta\in [0,1)$ at every point in $\dom(\partial \varphi)$ is called a KL function with exponent $\theta$. For connections between the KL property and the well-known error bound theory \cite{Luo-Pang:1994Mathematical_Programming, Pang:1997Mathematical_Programming}, we refer the interested refers to \cite{Bolte-Nguyen-Peypouquet:2017Mathematical_Programming, Li-Pong:2018Foundations_of_computational_mathematics}.

A wide range of functions are KL functions. Among those functions, the proper lower semicontinuous semialgebraic functions (see \cite[Theorem 3]{Bolte-Sabach-Teboulle:MP:2014}) cover most frequently appearing functions in applications.
A function $\varphi:\mathbb{R}^n\to (-\infty,+\infty]$ is said to be semialgebraic if its graph Graph$(\varphi):=\{(x,s)\in\mathbb{R}^n\times\mathbb{R}:s=\varphi(x)\}$ is a semialgebraic subset of $\mathbb{R}^{n+1}$, that is, there exist a finite number of real polynomial functions $G_{ij}$, $H_{ij}:\mathbb{R}^{n+1}\to\mathbb{R}$ such that
\begin{equation*}
	\Graph(\varphi)=\bigcup^p_{j=1}\,\bigcap^q_{i=1}\,\{z\in\mathbb{R}^{n+1}:G_{ij}(z)=0,~H_{ij}(z)<0 \}.
\end{equation*}
In addition, \cite{Attouch-bolt-redont-soubeyran:2010} and \cite[Theorem 3.1]{Bolte-Daniilidis-Lewis:2007SIAMOPT} pointed out that a proper closed semialgebraic function is a KL function with some exponent $\theta\in [0,1)$.

The following lemma regards the result of the uniformized KL property.



\begin{lemma}[Uniformized KL property]\label{lemma: Uniformized KL property}
	Let $\Upsilon\subseteq\mathbb{R}^n$ be a compact set, and the proper function $\varphi:\mathbb{R}^n\to(-\infty,+\infty]$ be constant on $\Upsilon$. 
	\begin{enumerate}[label = {\upshape(\roman*)}]
		\item If $\varphi$ satisfies the KL property at each point of $\Upsilon$, then there exist $\epsilon>0$ and a continuous concave function $\phi:[0,\epsilon)\to[0,+\infty)$ satisfying Definition \ref{Def:KL_property} \StatementNum{1} and \StatementNum{2}; besides, there exists $\delta>0$ such that
		$\phi'(\varphi(z)-\varphi(x))\mathrm{~dist}(0,\partial\varphi(z)) \geq 1$
		holds for any $x\in\Upsilon$ and $z\in\mathcal{B}_{\varphi}^{\epsilon}(x,\delta)$.
		\item If $\varphi$ satisfies the KL property at each point of $\Upsilon$ with the exponent $\theta\in[0,1)$, then there exist $\epsilon,\delta,c>0$ such that
		$\mathrm{~dist}(0,\partial\varphi(z)) \geq c (\varphi(z)-\varphi(x))^{\theta}$
		holds for any $x\in\Upsilon$ and $z\in\mathcal{B}_{\varphi}^{\epsilon}(x,\delta)$.
	\end{enumerate}
\end{lemma}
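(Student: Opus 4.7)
The plan is to prove both statements by a standard compactness/covering argument, gluing together the local KL data at individual points into a single triple $(\epsilon,\delta,\phi)$ valid uniformly over $\Upsilon$. Since $\varphi$ is constant on $\Upsilon$, denote the common value by $\eta$; then for every $x\in\Upsilon$ the set $\mathcal{B}_{\varphi}^{\epsilon}(x,\delta)$ equals $\mathcal{B}(x,\delta)\cap\{z:\eta<\varphi(z)<\eta+\epsilon\}$, so $x$ enters only through the spatial ball, which is what makes patching feasible.

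For statement \StatementNum{1}, at each $\bar{x}\in\Upsilon$ I would invoke \Cref{Def:KL_property} at $\bar{x}$ to produce positive constants $\epsilon_{\bar{x}}, \delta_{\bar{x}}$ and a desingularizing function $\phi_{\bar{x}}$. The open cover $\{\mathcal{B}(\bar{x},\delta_{\bar{x}}/2):\bar{x}\in\Upsilon\}$ of the compact set $\Upsilon$ admits a finite subcover indexed by points $\bar{x}^{(1)},\ldots,\bar{x}^{(p)}$ with associated data $(\epsilon_i,\delta_i,\phi_i)$. I would then set $\epsilon:=\min_i\epsilon_i$, $\delta:=\min_i \delta_i/2$, and $\phi:=\sum_{i=1}^{p}\phi_i$ on $[0,\epsilon)$. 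Continuity on $[0,\epsilon)$, concavity, differentiability on $(0,\epsilon)$, the strict positivity of $\phi'$, and $\phi(0)=0$ all transfer from the summands to the sum.

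The uniform KL inequality then follows by a triangle-inequality argument: for any $x\in\Upsilon$ choose an index $i$ with $\|x-\bar{x}^{(i)}\|<\delta_i/2$; for any $z\in\mathcal{B}_{\varphi}^{\epsilon}(x,\delta)$ we have $\|z-\bar{x}^{(i)}\|<\delta+\delta_i/2\leq\delta_i$ and $\eta<\varphi(z)<\eta+\epsilon\leq\eta+\epsilon_i$, so $z\in\mathcal{B}_{\varphi}^{\epsilon_i}(\bar{x}^{(i)},\delta_i)$. The local KL inequality at $\bar{x}^{(i)}$ then yields $\phi_i'(\varphi(z)-\eta)\,\dist(0,\partial\varphi(z))\geq 1$, and since $\phi'(\varphi(z)-\eta)\geq \phi_i'(\varphi(z)-\eta)>0$ by construction, the desired uniform bound holds. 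For statement \StatementNum{2} I would rerun the same procedure with each $\phi_i(s)=a_{0,i}s^{1-\theta}$; at the applicable index $i$ this gives $(1-\theta)a_{0,i}(\varphi(z)-\eta)^{-\theta}\dist(0,\partial\varphi(z))\geq 1$, hence $\dist(0,\partial\varphi(z))\geq c_i(\varphi(z)-\eta)^{\theta}$ with $c_i:=1/((1-\theta)a_{0,i})$, and taking $c:=\min_i c_i$ produces the claimed uniform exponent-$\theta$ bound.

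The argument is essentially a routine patching, and no step should require substantial analytical work beyond verifying that concavity, continuity, and differentiability survive a finite sum. The only genuinely delicate bookkeeping is ensuring the radii can be shrunk uniformly so that a single $\delta$ controls every $x\in\Upsilon$ simultaneously; this is precisely why the cover is taken with halved radii $\delta_{\bar{x}}/2$, so that the triangle inequality absorbs both the variation of $x$ inside $\Upsilon$ and the local KL radius around the reference point $\bar{x}^{(i)}$.
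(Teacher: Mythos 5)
Your proof is correct, and it is essentially the standard covering argument behind this lemma: the paper does not reprove it but cites \cite{Bolte-Sabach-Teboulle:MP:2014} and \cite{Yu-Li-Pong:2021FOCM}, whose proofs proceed exactly as you do — finite subcover of $\Upsilon$, $\epsilon:=\min_i\epsilon_i$, $\delta$ half the smallest radius, $\phi:=\sum_i\phi_i$, and $\phi'\geq\phi_i'$ to transfer the local inequality. Your argument also correctly avoids any use of lower semicontinuity of $\varphi$, which is precisely the point the paper makes in stating the lemma without that hypothesis.
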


\Cref{lemma: Uniformized KL property} can be found in \cite[Lemma 6]{Bolte-Sabach-Teboulle:MP:2014} and \cite[Lemma 2.2]{Yu-Li-Pong:2021FOCM} with an additional assumption that $\varphi$ is lower semicontinuous on $\mathbb{R}^n$. However, we notice that this assumption is not used in the proof of \cite[Lemma 6]{Bolte-Sabach-Teboulle:MP:2014} and \cite[Lemma 2.2]{Yu-Li-Pong:2021FOCM}. Thus, we present \Cref{lemma: Uniformized KL property} without assuming the lower semicontinuity of $\varphi$. Thanks to \Cref{lemma: Uniformized KL property}, we generalize the framework proposed in \cite[Theorem 1]{Banert-Bot:2019MP} for proving global sequential convergence in the next proposition.

\begin{proposition}\label{proposition: KL convergence framework}
	Let $H:\mathbb{R}^n\times\mathbb{R}^m\to (-\infty,+\infty]$ be proper, and $\{a_k:k\in\mathbb{N} \}$ be a nonnegative scalar sequence. Consider a bounded sequence $\{(x^k,y^k):k\in\mathbb{N} \}$ satisfying the following three conditions:
	\begin{enumerate}[label = {\upshape(\roman*)}]
		\item (Sufficient decrease condition.) There exist $C_1>0$ and $K_1>0$ such that
		\begin{equation*}
			H(x^{k+1},y^{k+1}) + C_1\left( \|x^{k+1}-x^k\|_2^2 + a_{k}\|y^{k+1}-y^k\|^2_2 \right) \leq H(x^k,y^k)
		\end{equation*}
		holds for any $k\geq K_1$;
		\item (Relative error condition.) There exist $C_2>0$ and $K_2>0$ such that
		\begin{equation*}
			\|w^{k+1}\|_2 \leq C_2\left( \|x^{k+1}-x^k\|_2 + \sqrt{a_{k}}\|y^{k+1}-y^k\|_2 \right)
		\end{equation*}
		holds with some $w^{k+1} \in \partial H(x^{k+1},y^{k+1})$ for any $k\geq K_2$;
		\item (Continuity condition.) $\lim_{k\to\infty} H(x^k,y^k) = \xi $ exists, and $H\equiv \xi \text{  holds on  }\Upsilon,$ where $\Upsilon$ is the set of accumulation points of $\{(x^k,y^k):k\in\mathbb{N} \}$.
	\end{enumerate}	
	If $H$ satisfies the KL property at each point of $\Upsilon$, 
	then there hold:
	\begin{enumerate}[label = {\upshape(\roman*)}]
		\item $\sum_{k=0}^{\infty}{ (\|x^{k+1}-x^k\|_2 + \sqrt{a_{k}}\|y^{k+1}-y^k\|_2)  }<+\infty$;
		\item  $\lim_{k\to\infty} x^k = x^{\star}$ for some $x^{\star}\in\mathbb{R}^n$;
		\item $0\in\partial H(x^{\star},y^{\star})$ for any $(x^{\star},y^{\star})\in\Upsilon$.
	\end{enumerate}
\end{proposition}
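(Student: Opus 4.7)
The plan is to follow the Attouch--Bolte--Svaiter template, adapted to the asymmetric $\sqrt{a_k}$-weighted pairing of the decrease and relative-error conditions. First I would record the setup: boundedness makes $\Upsilon$ nonempty and compact, the continuity hypothesis (iii) provides $H\equiv\xi$ on $\Upsilon$, and hypothesis (i) makes $\{H(x^k,y^k)\}$ eventually monotone with limit $\xi$. A standard extract-a-subsequence argument using boundedness yields $\mathrm{dist}((x^k,y^k),\Upsilon)\to 0$. The degenerate situation $H(x^{k_0},y^{k_0})=\xi$ can be dispatched separately, because sufficient decrease then forces $x^{k+1}=x^k$ and $\sqrt{a_k}\|y^{k+1}-y^k\|_2=0$ for all $k\ge k_0$, while the relative-error bound delivers $0\in\partial H(x^{k+1},y^{k+1})$, so (i)--(iii) follow trivially. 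Hence I would assume $H(x^k,y^k)>\xi$ for every $k$ large.

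Next I would apply Lemma~\ref{lemma: Uniformized KL property}(i) on the compact set $\Upsilon$ to obtain a single concave desingularizer $\phi$ and radii $\epsilon,\delta>0$ such that the KL inequality $\phi'(H(z)-\xi)\,\mathrm{dist}(0,\partial H(z))\ge 1$ holds uniformly for $z\in\mathcal{B}_H^{\epsilon}(x,\delta)$, $x\in\Upsilon$. The two limits above place $(x^k,y^k)$ inside such a neighborhood for all $k\ge K_0$, where $K_0\ge\max\{K_1,K_2\}$. Writing $u_k:=\|x^{k+1}-x^k\|_2$ and $v_k:=\sqrt{a_k}\,\|y^{k+1}-y^k\|_2$, I would combine concavity of $\phi$ with hypothesis (i) to obtain
\begin{equation*}
\phi(H(x^k,y^k)-\xi)-\phi(H(x^{k+1},y^{k+1})-\xi)\ \ge\ C_1\,\phi'(H(x^k,y^k)-\xi)\,(u_k^2+v_k^2),
\end{equation*}
while the KL inequality together with hypothesis (ii) applied at index $k$ yields $\phi'(H(x^k,y^k)-\xi)\ge 1/\bigl(C_2(u_{k-1}+v_{k-1})\bigr)$. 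Denoting $D_k:=\phi(H(x^k,y^k)-\xi)-\phi(H(x^{k+1},y^{k+1})-\xi)$ and using $(u_k+v_k)^2\le 2(u_k^2+v_k^2)$ with a Young-type AM--GM step, I would extract the contraction estimate
\begin{equation*}
u_k+v_k\ \le\ \tfrac{1}{4}(u_{k-1}+v_{k-1})+\tfrac{2C_2}{C_1}D_k.
\end{equation*}
Summing from $K_0+1$ to any $N$, the $D_k$-series telescopes and is bounded by $\phi(H(x^{K_0+1},y^{K_0+1})-\xi)$, while the factor $\tfrac{1}{4}$ in front of the tail allows its absorption into the left-hand side, producing $\sum_k(u_k+v_k)<+\infty$, which is conclusion (i).

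For conclusion (ii), $\sum_k u_k<+\infty$ makes $\{x^k\}$ Cauchy, hence $x^k\to x^{\star}$. For conclusion (iii), I would fix $(x^{\star},y^{\star})\in\Upsilon$ and extract a subsequence with $(x^{k_j+1},y^{k_j+1})\to(x^{\star},y^{\star})$; summability forces $u_{k_j}+v_{k_j}\to 0$, so hypothesis (ii) provides $w^{k_j+1}\in\partial H(x^{k_j+1},y^{k_j+1})$ with $\|w^{k_j+1}\|_2\to 0$. Since hypothesis (iii) gives $H(x^{k_j+1},y^{k_j+1})\to\xi=H(x^{\star},y^{\star})$, the definition of the limiting subdifferential delivers $0\in\partial H(x^{\star},y^{\star})$.

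The main obstacle I anticipate is the asymmetry introduced by the weights $a_k$: with $v_k=\sqrt{a_k}\|y^{k+1}-y^k\|_2$ as the natural bookkeeping variable, the $a_k$-weighted decrease in (i) and the $\sqrt{a_k}$-weighted error in (ii) match up correctly only under a carefully calibrated AM--GM, and one must verify that the resulting contraction factor in front of $(u_{k-1}+v_{k-1})$ is strictly less than $1$, since that is precisely what enables the tail absorption. A secondary subtlety is that when $a_k$ is allowed to vanish, summability of $v_k$ does not give $y^k$-convergence --- which is why the statement only asserts $x^k$-convergence in (ii) and formulates stationarity in (iii) relative to the accumulation-point set rather than to a single limit.
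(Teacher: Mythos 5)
Your proposal is correct and follows essentially the same route as the paper's proof in Appendix A: split off the degenerate case $H(x^{k_0},y^{k_0})=\xi$, invoke the uniformized KL property (Lemma \ref{lemma: Uniformized KL property}) on the compact set $\Upsilon$, combine sufficient decrease, concavity of $\phi$ and the relative error bound via an AM--GM step, and telescope with absorption of the contracted tail. The only difference is bookkeeping: the paper tracks $\sqrt{d_k}$ with $d_k=\|x^{k+1}-x^k\|_2^2+a_k\|y^{k+1}-y^k\|_2^2$ and a contraction factor $\tfrac12$, whereas you track $u_k+v_k$ directly with factor $\tfrac14$; both yield the same summability and conclusions.
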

We prove this Proposition \ref{proposition: KL convergence framework} in Appendix \ref{Appendix: the proof of KL convergence}.

\section{The relationship between problems \eqref{problem:root} and \eqref{problem:root primal-dual}}\label{section: Connections between primal and primal-dual problems}

In this section, we establish the relationship between optimal solutions and critical points, as well as KL exponents for the extended objectives of problem \eqref{problem:root} and problem \eqref{problem:root primal-dual}. The extended objective $F:\mathbb{R}^n\to [0,+\infty]$ of problem \eqref{problem:root} is defined at $x\in\mathbb{R}^n$ as
\begin{equation}
	\label{definition: function F}
	F(x) := \begin{cases}
		\frac{f(x)+h(x)}{g(x)}, & \text{if } x\in\Omega\cap\dom(f),\\
		+\infty, & \text{else.}
	\end{cases}
\end{equation}
By introducing $\zeta(x) := f(x)+h(x)$ and $\eta(x,y) := \innerP{x}{y}-g^*(y)$, 
the extended objective  $Q:\mathbb{R}^n\times \mathbb{R}^n\to [0,+\infty]$ of problem \eqref{problem:root primal-dual} is defined at $(x,y)\in \mathbb{R}^n\times \mathbb{R}^n$ as
\begin{equation}
	\label{definition: function Q}
	Q(x,y) := \begin{cases}
		\frac{\zeta(x)}{\eta(x,y)}, & \text{if } (x,y)\in\dom(f)\times\dom(g^*) \text{ and } \eta(x,y)>0,\\
		+\infty, & \text{else.}
	\end{cases}
\end{equation}
It is worth noting that: \StatementNum{1} For any $x\in\dom(F)$, we have $\{x\} \times\partial g(x) \subseteq \dom(Q) $; \StatementNum{2} For any $(x,y)\in\dom(Q)$, there holds $x\in\dom(F)$. Thanks to the above extended objectives, problems \eqref{problem:root} and \eqref{problem:root primal-dual} can be rewritten as $\min\{F(x):x\in\mathbb{R}^n \}$ and $\min\{Q(x,y):(x,y)\in\mathbb{R}^n\times\mathbb{R}^n \}$, respectively.
We next prove the equivalence of problems \eqref{problem:root} and \eqref{problem:root primal-dual} in the following proposition.
\begin{proposition}
	The functions $F$ and $Q$ respectively given by \eqref{definition: function F} and \eqref{definition: function Q} have the same infimum. Let $(x^{\star},y^{\star})\in\dom(Q)$. Then, $x^{\star}$ is a global minimizer of $F$ with $F(x^{\star})y^{\star}\in F(x^{\star}) \partial g(x^{\star})$, if and only if $(x^{\star},y^{\star})$ is a global minimizer of $Q$.
\end{proposition}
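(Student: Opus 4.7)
The plan is to reduce everything to the Fenchel-Young inequality. Since $g$ is convex, real-valued, and nonnegative, biconjugation gives $g^{**}=g$, and for every $(x,y)\in\mathbb{R}^n\times\mathbb{R}^n$,
\begin{equation*}
\innerP{x}{y}-g^*(y) \leq g(x),
\end{equation*}
with equality if and only if $y\in\partial g(x)$. Moreover, because $g$ is real-valued and convex, $\partial g(x)$ is nonempty at every $x\in\mathbb{R}^n$, and $g(x)>0$ for every $x\in\Omega$.

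First I would establish $\inf F=\inf Q$. For the inequality $\inf Q\leq \inf F$, pick any $x\in\dom(F)\subseteq\Omega\cap\dom(f)$; since $\partial g(x)\neq\emptyset$, choose $y\in\partial g(x)$, which gives $\innerP{x}{y}-g^*(y)=g(x)>0$, hence $(x,y)\in\dom(Q)$ with $Q(x,y)=F(x)$. For the reverse inequality, take any $(x,y)\in\dom(Q)$; then $x\in\dom(F)$ (noting $g(x)\geq\innerP{x}{y}-g^*(y)>0$), and using $\zeta(x)=f(x)+h(x)\geq 0$ together with Fenchel-Young yields
\begin{equation*}
Q(x,y)=\frac{\zeta(x)}{\innerP{x}{y}-g^*(y)}\geq \frac{\zeta(x)}{g(x)}=F(x)\geq \inf F.
\end{equation*}

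Then I would prove the minimizer equivalence by a clean case split on whether $F(x^{\star})=0$. For given $(x^{\star},y^{\star})\in\dom(Q)$, the chain of inequalities above shows $Q(x^{\star},y^{\star})\geq F(x^{\star})$, with equality iff either $\zeta(x^{\star})=0$ (equivalently $F(x^{\star})=0$) or $y^{\star}\in\partial g(x^{\star})$. The inclusion $F(x^{\star})y^{\star}\in F(x^{\star})\partial g(x^{\star})$ encodes exactly this dichotomy: it is automatic when $F(x^{\star})=0$ (both sides reduce to $\{0\}$), and reduces to $y^{\star}\in\partial g(x^{\star})$ when $F(x^{\star})>0$. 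Combining this equality-case description with $\inf F=\inf Q$ yields both implications: if $x^{\star}$ minimizes $F$ and $F(x^{\star})y^{\star}\in F(x^{\star})\partial g(x^{\star})$, then $Q(x^{\star},y^{\star})=F(x^{\star})=\inf F=\inf Q$; conversely, if $(x^{\star},y^{\star})$ minimizes $Q$, then $F(x^{\star})\leq Q(x^{\star},y^{\star})=\inf Q=\inf F$ forces $x^{\star}$ to minimize $F$ and forces equality in Fenchel-Young, i.e., the required subgradient inclusion.

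The main subtlety is not a conceptual obstacle but rather the uniform handling of the degenerate case $F(x^{\star})=0$: there, no genuine constraint is placed on $y^{\star}$ beyond $(x^{\star},y^{\star})\in\dom(Q)$, and the inclusion $F(x^{\star})y^{\star}\in F(x^{\star})\partial g(x^{\star})$ is vacuous. I would therefore state the "equality iff $\zeta(x^{\star})=0$ or $y^{\star}\in\partial g(x^{\star})$" observation explicitly as the bridge, so that both directions and both cases are dispatched in a single line without having to treat $F(x^{\star})=0$ and $F(x^{\star})>0$ separately in the argument itself.
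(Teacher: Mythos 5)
Your proposal is correct and follows essentially the same route as the paper's proof: both rest on the Fenchel--Young inequality (with equality iff $y\in\partial g(x)$) to get $\inf F=\inf Q$ and then dispatch the minimizer equivalence by the dichotomy $F(x^{\star})=0$ versus $y^{\star}\in\partial g(x^{\star})$. Your packaging of that dichotomy as a single ``equality iff'' bridge is a slightly cleaner organization of the same case analysis the paper carries out, not a different argument.
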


\begin{proof}
	Since $f+h\geq 0$ and $\partial g(x)\neq \emptyset$ holds for each $x\in\mathbb{R}^n$, the \textit{Fenchel-Young Inequality} implies
	\begin{equation}\label{eq: z1110 1621}
		\begin{split}
			&\inf \,\{ F(x):x\in\dom(F) \}\\
			& \indent =\inf\left\{ \frac{f(x)+h(x)}{\innerP{x}{y}-g^*(y)}: x\in\dom(f), \innerP{x}{y}-g^*(y)>0 , y\in\partial g(x)  \right\}\\
			& \indent= \inf\,\{ Q(x,y):(x,y)\in\dom(Q) \}.
		\end{split}
	\end{equation}
	
	Let $x^{\star}$ be a global minimizer of $F$ and $F(x^{\star})y^{\star}\in F(x^{\star}) \partial g(x^{\star})$. Then, when $F(x^{\star})>0$, there holds
	\begin{equation}\label{eq: z1110 1622}
		\frac{f(x^{\star})+h(x^{\star})}{\innerP{x^{\star}}{y^{\star}}-g^*(y^{\star})} = \frac{f(x^{\star})+h(x^{\star})}{g(x^{\star})} = \inf\{ F(x):x\in\dom(F) \},
	\end{equation}
	hence it follows from \eqref{eq: z1110 1621} and \eqref{eq: z1110 1622} that $(x^{\star},y^{\star})$ is a global minimizer of $Q$. Besides, $(x^{\star},y^{\star})$, taking $Q(x^{\star},y^{\star})=0$, is still a global minimizer of $Q$ when $F(x^{\star})=0$.  
	Conversely, suppose $(x^{\star},y^{\star})\in\dom(Q)$ is a global minimizer of $Q$. Then we have
	\begin{equation}\label{eq: z1110 1623}
		\frac{f(x^{\star})+h(x^{\star})}{g(x^{\star})} \leq \frac{f(x^{\star})+h(x^{\star})}{\innerP{x^{\star}}{y^{\star}}-g^*(y^{\star})} = \inf\{ Q(x,y):(x,y)\in\dom(Q) \}.
	\end{equation}
	By combining \eqref{eq: z1110 1621} and \eqref{eq: z1110 1623}, we deduce that $x^{\star}$ is a global minimizer of $F$. Therefore, it is derived from the first relation of \eqref{eq: z1110 1623}, which holds with equality actually, that
	$	F(x^{\star})(\innerP{x^{\star}}{y^{\star}} - g^*(y^{\star}))
	= f(x^{\star})+h(x^{\star}) = F(x^{\star})g(x^{\star}),$
	then $F(x^{\star})y^{\star}\in F(x^{\star}) \partial g(x^{\star})$ follows. This completes the proof.
\end{proof}

We next shall show the connections between critical points of $F$ and stationary points of $Q$. To this end, we recall the definition of critical points of problem \eqref{problem:root}.

\begin{definition}[Critical points of $F$ {\cite[Definition 3.4]{NaZhang-QiaLi:2022SIAM-OPT}}]\label{Def: critical points}
	Let $F$ be defined as \eqref{definition: function F}. Then $x^{\star}\in\dom(F)$ is said to be a critical point of $F$ if
	$$0 \in \widehat{\partial} f(x^{\star}) + \nabla h(x^{\star}) - F(x^{\star}) \partial g(x^{\star}).$$
\end{definition}

It is demonstrated by \cite[below Definition 3.4]{NaZhang-QiaLi:2022SIAM-OPT} that the statement that $x^{\star}\in \dom(F)$ is a critical point of $F$ coincides with the statements that $x^{\star}\in \dom(F)$ is a stationary point of $F$, i.e., $0\in \widehat{\partial} F(x^{\star})$, when the denominator $g$ is differentiable. The following proposition shows the equivalence between critical points of $F$ and stationary points of $Q$, when $g^*$ is continuous on its domain.


\begin{proposition}\label{proposition: the equivalence of F and Q critical point}
	Let $(x^{\star},y^{\star})\in\dom(Q)$, and $g^*$ be continuous on $\dom(g^*)$. Then, $x^{\star}$ is a critical point of $F$ with $F(x^{\star})y^{\star}\in F(x^{\star})\partial g(x^{\star})$, if and only if $(x^{\star},y^{\star})$ is a stationary point of $Q$, i.e., $0\in\widehat{\partial} Q(x^{\star},y^{\star})$.	
\end{proposition}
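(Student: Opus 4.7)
The plan is to reduce the Fr\'echet stationarity of $Q$ at $(x^\star,y^\star)$ to that of a linearized auxiliary function and then match the outcome with Definition~3.4. I read the phrase ``$x^\star$ is a critical point of $F$ with $F(x^\star)y^\star\in F(x^\star)\partial g(x^\star)$'' as specifying that $y^\star$ itself is the subgradient witnessing the inclusion in Definition~3.4.

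I would first dispatch the degenerate case $F(x^\star)=0$: here $\zeta(x^\star)=0$, so by $Q\ge 0$ the pair $(x^\star,y^\star)$ minimizes $Q$ globally and $0\in\widehat{\partial}Q(x^\star,y^\star)$ automatically. Symmetrically, $\zeta\ge 0$ together with $C^1$-smoothness of $h$ forces $0\in\widehat{\partial}\zeta(x^\star)=\widehat{\partial}f(x^\star)+\nabla h(x^\star)$, which, combined with the vacuous constraint $F(x^\star)y^\star\in F(x^\star)\partial g(x^\star)=\{0\}$, yields the criticality side. So both sides hold unconditionally in this case.

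In the main case $F(x^\star)>0$, set $c := Q(x^\star,y^\star)$ and $\tilde{\Psi}(x,y):=\zeta(x)-c\,\eta(x,y)$, regarded with the same effective domain as $Q$. From $\zeta(x^\star)=c\,\eta(x^\star,y^\star)$ comes the identity $Q(x,y)-c = \tilde{\Psi}(x,y)/\eta(x,y)$ on $\dom(Q)$. The continuity of $g^*$ on $\dom(g^*)$ makes $\eta$ continuous on $\mathbb{R}^n\times\dom(g^*)$, and since $\eta(x^\star,y^\star)>0$ the positive prefactor $1/\eta(x,y)$ is locally bounded above and below. Plugging into the definition of the Fr\'echet subdifferential, I would conclude
\[
0\in\widehat{\partial}Q(x^\star,y^\star) \iff 0\in\widehat{\partial}\tilde{\Psi}(x^\star,y^\star).
\]
On a neighborhood of $(x^\star,y^\star)$ inside $\dom(Q)$ (where $\eta>0$ automatically holds, so the effective domain agrees locally with $\dom(f)\times\dom(g^*)$), I decompose
\[
\tilde{\Psi}(x,y) = \bigl(\zeta(x) + c\,g^*(y)\bigr) - c\innerP{x}{y},
\]
where the bracket is block-separable, $h$ is $C^1$ at $x^\star\in\Omega$, and $c g^*$ is convex with $c>0$ so that $\widehat{\partial}(cg^*)=c\,\partial g^*$. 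The standard sum rule for smooth summands and the block-product rule recalled in Section~2.1 then give
\[
\widehat{\partial}\tilde{\Psi}(x^\star,y^\star) = \bigl(\widehat{\partial}f(x^\star)+\nabla h(x^\star)-c y^\star\bigr)\times\bigl(c\,\partial g^*(y^\star)-c x^\star\bigr),
\]
so $0\in\widehat{\partial}\tilde{\Psi}(x^\star,y^\star)$ is equivalent to the two inclusions $c y^\star\in\widehat{\partial}f(x^\star)+\nabla h(x^\star)$ and $x^\star\in\partial g^*(y^\star)$. By the Fenchel reciprocity $x\in\partial g^*(y)\Leftrightarrow y\in\partial g(x)$ for the proper closed convex $g$, the latter is $y^\star\in\partial g(x^\star)$, which gives $\eta(x^\star,y^\star)=g(x^\star)$ and hence $c=F(x^\star)$. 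The first inclusion now reads $F(x^\star)y^\star\in\widehat{\partial}f(x^\star)+\nabla h(x^\star)$, which together with $y^\star\in\partial g(x^\star)$ is precisely the criticality of $x^\star$ for $F$ witnessed by $y^\star$ together with $F(x^\star)y^\star\in F(x^\star)\partial g(x^\star)$.

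I expect the delicate step to be the quotient reduction: although the algebraic identity is elementary, the Fr\'echet liminf defining $0\in\widehat{\partial}Q$ runs only over sequences in the implicitly constrained set $\{\eta>0\}\cap(\dom(f)\times\dom(g^*))$, and the continuity of $g^*$ on $\dom(g^*)$ is exactly what keeps the positive denominator $\eta$ from oscillating as $(x,y)\to(x^\star,y^\star)$, thereby legitimizing the replacement of $Q$ by $\tilde{\Psi}$. Once that reduction is secured, the remaining Fr\'echet calculus and the Fenchel identification are routine.
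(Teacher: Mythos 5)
Your argument is correct and is essentially the paper's own proof: your auxiliary function $\tilde{\Psi}(x,y)=\zeta(x)-Q(x^{\star},y^{\star})\,\eta(x,y)$ is exactly the paper's $\phi_{(x^{\star},y^{\star})}$ divided by the positive constant $\eta(x^{\star},y^{\star})$, and your reduction of $0\in\widehat{\partial}Q(x^{\star},y^{\star})$ to $0\in\widehat{\partial}\tilde{\Psi}(x^{\star},y^{\star})$ via the continuity of $g^*$ on its domain and $\eta(x^{\star},y^{\star})>0$, followed by the separable-plus-smooth Fr\'echet calculus and the Fenchel reciprocity $x^{\star}\in\partial g^*(y^{\star})\Leftrightarrow y^{\star}\in\partial g(x^{\star})$, mirrors the paper's steps. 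Your reading of the hypothesis (that $y^{\star}$ itself witnesses the inclusion in Definition~3.4) is the one the paper's proof implicitly relies on, and your handling of the degenerate case $F(x^{\star})=0$ coincides with (and is slightly more explicit than) the paper's.
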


\begin{proof}
	It is trivial for \Cref{proposition: the equivalence of F and Q critical point} when $F(x^{\star}) =0$, since $x^{\star}$ and $(x^{\star},y^{\star}) $ are global minimizers of $F$ and $Q$, respectively.
	Now we deal with the case when $F(x^{\star}) >0$. To this end, we introduce an auxiliary function for $(x^{\star},y^{\star})\in\dom(Q)$ by
	\begin{equation*}
		\phi_{(x^{\star},y^{\star})}(x,y) := \eta(x^{\star},y^{\star}) \zeta(x)-\zeta(x^{\star})\eta(x,y).
	\end{equation*}
	Then we have
	\begin{align}
		\liminf\limits_{\substack{(x,y)\in\dom(Q)\\(x,y)\to (x^{\star},y^{\star})}} \frac{Q(x,y)-Q(x^{\star},y^{\star})}{\|(x,y)-(x^{\star},y^{\star})\|_2}
		=
		\liminf\limits_{\substack{(x,y)\in\dom(Q)\\(x,y)\to (x^{\star},y^{\star})}} \frac{\phi_{(x^{\star},y^{\star})}(x,y)}{\eta(x^{\star},y^{\star})\eta(x,y)\|(x,y)-(x^{\star},y^{\star})\|_2}\notag \\
		=
		\eta^{-2}(x^{\star},y^{\star})
		\liminf\limits_{\substack{(x,y)\in\dom(\phi_{(x^{\star},y^{\star})})\\(x,y)\to (x^{\star},y^{\star})}} \frac{\phi_{(x^{\star},y^{\star})}(x,y)}{\|(x,y)-(x^{\star},y^{\star})\|_2}, \label{eq: z1112 5}
	\end{align}
	where the last equality follows from the continuity of $g^*$ on $\dom(g^*)$ and $\eta(x^{\star},y^{\star})>0$.
	Besides, for any $(x,y)\in\dom(\phi_{(x^{\star},y^{\star})}) = \dom(f)\times \dom(g^*)$, there holds
	\begin{equation*}
		\widehat{\partial} \phi_{(x^{\star},y^{\star})}(x,y) =
		\Big (  \eta(x^{\star},y^{\star})\, \widehat{\partial}\zeta(x) - \zeta(x^{\star})y \Big )
		\times
		\Big ( \widehat{\partial} (\zeta(x^{\star})g^*)\,(y) -\zeta(x^{\star})x \Big ).
	\end{equation*}
	
	If $x^{\star}$ is a critical point of $F$ with $y^{\star}\in\partial g(x^{\star})$, there holds $0\in\widehat{\partial}\phi_{(x^{\star},y^{\star})}(x^{\star},y^{\star})$, and thus we derive $0\in\widehat{\partial}Q(x^{\star},y^{\star})$ from \eqref{eq: z1112 5} with the fact that $\phi_{(x^{\star},y^{\star})}(x^{\star},y^{\star})=0$. Conversely, based on \eqref{eq: z1112 5}, the statement $0\in\widehat{\partial}Q(x^{\star},y^{\star})$ indicates $0\in\widehat{\partial}\phi_{(x^{\star},y^{\star})}(x^{\star},y^{\star})$, with which we deduce that $x^{\star}$ is a critical point of $F$ and $y^{\star}\in\partial g(x^{\star})$ owing to $\zeta(x^{\star}) = F(x^{\star})g(x^{\star}) >0$. This completes the proof.
\end{proof}

In particular when $g$ is continuously differentiable on $\Omega$ and $g^*$ satisfies the calmness condition on $\dom(g^*)$, we have $\eta(x,\nabla g(x)) = g(x)$ and $x\in\partial g^*(\nabla g(x))$ for any $x\in\Omega$. Then, there holds for any $x\in\dom(F)$ that
\begin{equation}\label{eq: z03051644}
	\dist(0,\partial F(x)) = \dist(0, \partial Q(x,\nabla g(x))).
\end{equation}
Specifically speaking, in the case when $\zeta(x)>0$, the equation \eqref{eq: z03051644} holds due to $\partial F(x) = \frac{1}{g(x)}(\partial \zeta(x)-F(x)\nabla g(x)) $ and $\partial Q(x,\nabla g(x))
= \partial F(x) \times \frac{F(x)}{g(x)} (\partial g^*(\nabla g(x))-x )$, which is indicated by \Cref{Corollary: limiting subdiff of frac}. In the case when $\zeta(x)=0$, since $x$ and $(x,\nabla g(x)) $ become global minimizers of $F$ and $Q$, respectively, we have $0\in\partial F(x)$ and $0\in\partial Q(x,\nabla g(x))$. Thus, it follows that $\dist(0,\partial F(x)) = \dist(0, \partial Q(x,\nabla g(x)))=0$. Owing to the closedness of the limiting subdifferential, we further deduce from \eqref{eq: z03051644} that $0\in\partial F(x)$ coincides with $0\in\partial Q(x,\nabla g(x))$.

The next theorem demonstrates the connections between KL exponents of $F$ and $Q$.
\begin{theorem}\label{theorem: KL exponents Q to F}
	Let $\overline{x}\in \dom(\partial F)$. Suppose that $g^*$ satisfies the calmness condition on $\dom(g^*)$. If $Q$ satisfies the KL property with the exponent $\theta\in[0,1)$ at $(\overline{x},\overline{y})$ for any $\overline{y}\in\partial g(\overline{x})$, then $F$ satisfies the KL property with the same exponent $\theta$ at $\overline{x}$.
\end{theorem}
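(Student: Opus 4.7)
The plan is to derive the KL inequality for $F$ at $\overline{x}$ from a uniformized KL property of $Q$ on the compact set $\Upsilon := \{\overline{x}\} \times \partial g(\overline{x})$. Two preliminary observations are key. First, by the Fenchel--Young equality, $\overline{y} \in \partial g(\overline{x})$ is equivalent to $\innerP{\overline{x}}{\overline{y}} = g(\overline{x}) + g^*(\overline{y})$, which yields $\eta(\overline{x}, \overline{y}) = g(\overline{x})$ and hence $Q(\overline{x}, \overline{y}) = F(\overline{x})$ for every $\overline{y} \in \partial g(\overline{x})$, so $Q$ is constant on $\Upsilon$. Second, $\Upsilon$ is compact since $\partial g(\overline{x})$ is a nonempty compact set for the convex real-valued function $g$. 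Combined with the hypothesis that $Q$ satisfies the KL property with exponent $\theta$ at each point of $\Upsilon$, \Cref{lemma: Uniformized KL property}\StatementNum{2} then furnishes constants $\epsilon, \delta, c > 0$ such that
\begin{equation*}
\dist(0, \partial Q(z', y')) \geq c\,(Q(z', y') - F(\overline{x}))^{\theta}
\end{equation*}
whenever $(z', y') \in \mathcal{B}_Q^{\epsilon}((\overline{x}, \overline{y}), \delta)$ for some $\overline{y} \in \partial g(\overline{x})$.

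Next, I would exploit the outer semicontinuity of $\partial g$ for convex real-valued $g$: there exists $\delta' \in (0, \delta/2)$ such that $\partial g(z) \subseteq \{v : \dist(v, \partial g(\overline{x})) < \delta/2\}$ whenever $z \in \mathcal{B}(\overline{x}, \delta')$. Then for any $z \in \mathcal{B}_F^{\epsilon}(\overline{x}, \delta')$ and any $v \in \partial g(z)$, I can choose $\overline{y} \in \partial g(\overline{x})$ with $\|v - \overline{y}\|_2 < \delta/2$, giving $\|(z, v) - (\overline{x}, \overline{y})\|_2 < \delta$; moreover, $\eta(z, v) = g(z) > 0$, hence $Q(z, v) = F(z) \in (F(\overline{x}), F(\overline{x}) + \epsilon)$. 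The uniformized KL inequality therefore delivers
\begin{equation*}
\dist(0, \partial Q(z, v)) \geq c\,(F(z) - F(\overline{x}))^{\theta}.
\end{equation*}

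The final step is to link $\partial Q(z, v)$ to $\partial F(z)$ via the quotient subdifferential formulas of \Cref{Corollary: limiting subdiff of frac}. Take any $w \in \partial F(z)$. Since $z \in \dom(F) \subseteq \Omega$ and $F(z) > F(\overline{x}) \geq 0$ force $\zeta(z), g(z) > 0$, part \StatementNum{1} of that result applies (its hypotheses on $g$ are automatic for convex real-valued functions), yielding $u \in \partial \zeta(z)$ and $v \in \partial g(z)$ with $w = (u - F(z)v)/g(z)$. Taking precisely this $v$ as the dual variable, part \StatementNum{2} of the same result, whose hypothesis holds since $g^*$ is calm on $\dom(g^*)$, gives
\begin{equation*}
\partial Q(z, v) = \frac{1}{g(z)}\Big( (\partial \zeta(z) - F(z)\, v) \times F(z)(\partial g^*(v) - z) \Big).
\end{equation*}
Because $v \in \partial g(z) \Leftrightarrow z \in \partial g^*(v)$, the pair $(w, 0)$ lies in $\partial Q(z, v)$, so $\dist(0, \partial Q(z, v)) \leq \|w\|_2$. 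Chaining with the previous bound gives $\|w\|_2 \geq c(F(z) - F(\overline{x}))^{\theta}$; taking infimum over $w \in \partial F(z)$ finally yields $\dist(0, \partial F(z)) \geq c\,(F(z) - F(\overline{x}))^{\theta}$ for all $z \in \mathcal{B}_F^{\epsilon}(\overline{x}, \delta')$, which is the desired KL property at $\overline{x}$ with exponent $\theta$.

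The main obstacle will be the tight coupling between the primal variable $z$ and the chosen dual variable $v$: the single $v$ must simultaneously represent an arbitrary $w \in \partial F(z)$ through the quotient subdifferential formula, belong to $\partial g(z)$ so that $Q(z, v) = F(z)$, and lie close enough to $\partial g(\overline{x})$ for the uniformized KL estimate to apply. The three ingredients that resolve this coupling are the Fenchel--Young identity $v \in \partial g(z) \Leftrightarrow z \in \partial g^*(v)$ (ensuring $(w, 0) \in \partial Q(z, v)$), the outer semicontinuity of $\partial g$ (ensuring $v$ lies close to $\partial g(\overline{x})$), and the calmness of $g^*$ (making the quotient subdifferential formula for $\rho$ applicable).
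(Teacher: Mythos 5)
Your proof is correct and follows essentially the same route as the paper: uniformized KL of $Q$ on the compact set $\{\overline{x}\}\times\partial g(\overline{x})$, outer semicontinuity of $\partial g$ at $\overline{x}$ (which the paper proves by a short contradiction argument rather than citing), and the quotient subdifferential formulas of \Cref{Corollary: limiting subdiff of frac} to pass from $\partial Q(z,v)$ with $v\in\partial g(z)$ to $\partial F(z)$. The only difference is cosmetic: you argue pointwise for each $w\in\partial F(z)$ by exhibiting $(w,0)\in\partial Q(z,v)$, whereas the paper writes the same estimate as a minimum over $y\in\partial g(z)$.
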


\begin{proof}
	We first claim that 
	\begin{equation}\label{eq: z03051721}
		\lim\limits_{x\to\overline{x}} \sup\Big\{ \dist(y,\partial g(\overline{x})): y\in\partial g(x) \Big\} = 0.
	\end{equation}
	We prove the equation \eqref{eq: z03051721} by contradiction. Suppose \eqref{eq: z03051721} does not hold, then there exist some $\epsilon>0$, $x^k\to\overline{x}$ and $y^k\in\partial g(x^k)$ such that $\dist(y^k,\partial g(\overline{x}))>\epsilon$, since $\partial g(x)$ is nonempty and compact for all $x\in\mathbb{R}^n$. The boundedness of $ \bigcup_{k\in\mathbb{N}} \partial g(x^k)$ implies some subsequence $y^{k_j}\to y^{\star}$, and $\dist(y^{\star},\partial g(\overline{x}))\geq \epsilon$ follows. However, by the definition of limiting subdifferentials, we have $y^{\star}\in\partial g(\overline{x})$ upon the fact that $y^{k_j}\in\partial g(x^{k_j})$ and $x^{k_j}\to\overline{x}$. Thus, the contradiction implies that \eqref{eq: z03051721} holds.
	
	Now we go back to the proof of the KL exponents of $F$. Since $Q\equiv F(\overline{x})$ holds on the compact set $\{\overline{x} \}\times\partial g(\overline{x})$, the function $Q$ has the uniformized KL exponent with the help of \Cref{lemma: Uniformized KL property} \StatementNum{2}. That is, there exists $c,\delta_1>0$ and $\epsilon\in (0,+\infty]$ such that
	\begin{equation}\label{eq: z0903 2320}
		\dist(0,\partial Q(x,y))\geq c(Q(x,y)-F(\overline{x}))^{\theta}
	\end{equation}
	holds for any $(x,y)\in \mathcal{U}:= \{(x,y):\dist((x,y),(\overline{x},\partial g(\overline{x})) ) \leq \delta_1  \}$ satisfying $Q(\overline{x},\overline{y}) < Q(x,y) < Q(\overline{x},\overline{y})+\epsilon $. With regard to $g(\overline{x}) >0$, we assume further that the $\delta_1>0$ is small enough so that $g>0$ holds on $\mathcal{U}$. Besides, the equation \eqref{eq: z03051721} implies some $\delta_2>0$ such that $\sup\{ \dist(y,\partial g(\overline{x})): y\in\partial g(x) \}\leq \delta_1$ holds for any $x\in \mathcal{B}(\overline{x},\delta_2)$. By taking $\delta:= \frac{1}{\sqrt{2}}\min\{\delta_1,\delta_2 \}$. one can verify that \eqref{eq: z0903 2320} holds for any $x\in \mathcal{B}_{F}^\epsilon(\overline{x},\delta)$ and $y\in\partial g(x)$, meanwhile $\zeta>0$ and $g>0$ holds on $\mathcal{B}_{F}^\epsilon(\overline{x},\delta)$. Owing to \Cref{Corollary: limiting subdiff of frac} \StatementNum{1}, it holds for any $x\in \mathcal{B}_{F}^\epsilon(\overline{x},\widetilde{\delta})\cap \dom(\partial F)$ that
	\begin{align*}
		&\dist(0,\partial F(x) ) 
		\geq \dist\left(0,\frac{1}{g(x)}(\partial\zeta(x)-F(x)\partial g(x))\right)\\
		&\overset{(\uppercase\expandafter{\romannumeral 1})}{=}
		\min\left\{\dist\left(0,\frac{1}{g(x)}(\partial\zeta(x)-F(x)y)\right):y\in\partial g(x)  \right\}  \\
		&= \min\left\{\dist\left(0,\frac{1}{\eta(x,y)}(\partial\zeta(x)-Q(x,y)y,Q(x,y)(\partial g^*(y)-x))\right):y\in\partial g(x)\right\} \\
		&\overset{(\uppercase\expandafter{\romannumeral 2})}{=} \min\{\dist(0,\partial Q(x,y)):y\in\partial g(x) \}\\
		&\overset{(\uppercase\expandafter{\romannumeral 3})}{\geq}
		\min\{c(Q(x,y)-F(\overline{x}))^{\theta}: y\in\partial g(x)  \} 
		=c(F(x)-F(\overline{x}))^{\theta},
	\end{align*}
	where $(\uppercase\expandafter{\romannumeral 1})$ follows from the compactness of $\partial g(x)$, and $(\uppercase\expandafter{\romannumeral 2})$ comes from \Cref{Corollary: limiting subdiff of frac} \StatementNum{2}, and $(\uppercase\expandafter{\romannumeral 3})$ holds thanks to \eqref{eq: z0903 2320}. This completes the proof.
\end{proof}


\section{A framework of multi-proximity gradient algorithms} \label{section: MPGA}

In this section, we propose a framework of multi-proximity gradient algorithms for solving problem \eqref{problem:root} and show that it has several desired properties. We first introduce the notion of proximity operators. For a proper closed function $\varphi:\mathbb{R}^n\to(-\infty,+\infty]$, the proximity operator of $\varphi$ at $x\in\mathbb{R}^n$, denoted by $\prox_{\varphi}(x)$, is defined by
\begin{equation*}
	\prox_{\varphi}(x) := \arg\min\left\{\varphi(z)+\frac{1}{2}\|z-x\|^2_2: z\in\mathbb{R}^n \right\}.
\end{equation*}
The proximity operator $\prox_{\varphi}(x)$ is single-valued if $\varphi$ is convex and may be set-valued when $\varphi$ is nonconvex. Below we present a sufficient condition for a stationary point of $Q$ and a critical point of $F$ using proximity operators of $f_i$, $i\in\mathbb{N}_N$ and $g^*$.

\begin{proposition}\label{prop: stationary point of prox}
	If $(x^{\star},y^{\star})\in\Omega\times\mathbb{R}^n$ satisfies	
	\begin{align}
		&y^{\star}=\prox_{\alpha_0 g^*}(y^{\star}+\alpha_0 x^{\star}),\label{eq: z1127 2200}\\
		&x^{\star}_i\in\prox_{\alpha_i f_i}(x^{\star}_i-\alpha_i\nabla_i h(x^{\star})+\alpha_i Q(x^{\star},y^{\star})y^{\star}_i),~~i\in\mathbb{N}_N,\label{eq: z1127 2201}
	\end{align}
	for some $\alpha_i>0$, $i\in\mathbb{N}_N^0$, then $x^{\star}$ is a critical point of $F$ with $y^{\star}\in\partial g(x^{\star})$, and hence $(x^{\star},y^{\star})\in\dom(Q)$ is a stationary point of $Q$ if $g^*$ is continuous on $\dom(g^*)$.
\end{proposition}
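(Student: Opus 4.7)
The plan is to decode each of the two proximity inclusions via Fermat's rule, aggregate the per-block Fr\'echet conditions using the block-separable structure of $f$, and then match the resulting relation to the critical-point definition of $F$ in \Cref{Def: critical points}; the conversion to stationarity of $Q$ then follows from \Cref{proposition: the equivalence of F and Q critical point}.

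First I would process \eqref{eq: z1127 2200}. Because $g^{*}$ is proper, closed and convex, the inclusion $y^{\star}=\prox_{\alpha_{0}g^{*}}(y^{\star}+\alpha_{0}x^{\star})$ says that $y^{\star}$ minimizes $\alpha_{0}g^{*}(\cdot)+\tfrac{1}{2}\|\cdot-(y^{\star}+\alpha_{0}x^{\star})\|_{2}^{2}$, and Fermat's rule (applied in the convex setting where Fr\'echet and convex subdifferentials coincide) gives $0\in\alpha_{0}\partial g^{*}(y^{\star})-\alpha_{0}x^{\star}$, i.e., $x^{\star}\in\partial g^{*}(y^{\star})$. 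By the Fenchel--Young equivalence recalled at the end of the generalized-subdifferential subsection, this is the same as $y^{\star}\in\partial g(x^{\star})$ together with $\innerP{x^{\star}}{y^{\star}}=g(x^{\star})+g^{*}(y^{\star})$. Since $x^{\star}\in\Omega$, the latter identity yields $\eta(x^{\star},y^{\star})=g(x^{\star})>0$, so $(x^{\star},y^{\star})\in\dom(Q)$ and in particular $Q(x^{\star},y^{\star})=\zeta(x^{\star})/g(x^{\star})=F(x^{\star})$.

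Next I would unpack \eqref{eq: z1127 2201}. For each $i\in\mathbb{N}_{N}$, the point $x_{i}^{\star}$ minimizes $w\mapsto\alpha_{i}f_{i}(w)+\tfrac{1}{2}\|w-z_{i}^{\star}\|_{2}^{2}$ with $z_{i}^{\star}:=x_{i}^{\star}-\alpha_{i}\nabla_{i}h(x^{\star})+\alpha_{i}Q(x^{\star},y^{\star})y_{i}^{\star}$. Fermat's rule for the Fr\'echet subdifferential, combined with the exact sum rule when one summand is smooth, produces
\[
0\in\widehat{\partial}f_{i}(x_{i}^{\star})+\nabla_{i}h(x^{\star})-Q(x^{\star},y^{\star})y_{i}^{\star},\qquad i\in\mathbb{N}_{N}.
\]
Using the product formula for the Fr\'echet subdifferential of the block-separable function $f$ and the fact that $\nabla h$ and $y^{\star}$ decompose accordingly, these $N$ inclusions stack into a single inclusion on $\mathbb{R}^{n}$. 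Substituting $Q(x^{\star},y^{\star})=F(x^{\star})$ from the first step and using $y^{\star}\in\partial g(x^{\star})$ yields
\[
0\in\widehat{\partial}f(x^{\star})+\nabla h(x^{\star})-F(x^{\star})y^{\star}\subseteq\widehat{\partial}f(x^{\star})+\nabla h(x^{\star})-F(x^{\star})\partial g(x^{\star}),
\]
which is precisely the condition in \Cref{Def: critical points}; hence $x^{\star}$ is a critical point of $F$ with $F(x^{\star})y^{\star}\in F(x^{\star})\partial g(x^{\star})$. Finally, under the continuity hypothesis on $g^{*}$, applying \Cref{proposition: the equivalence of F and Q critical point} in the ``only if'' direction promotes this to $0\in\widehat{\partial}Q(x^{\star},y^{\star})$.

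No single step is truly delicate, but the argument must be ordered: condition \eqref{eq: z1127 2201} itself features the value $Q(x^{\star},y^{\star})$, so one has to extract $y^{\star}\in\partial g(x^{\star})$ from \eqref{eq: z1127 2200} first in order to replace $Q(x^{\star},y^{\star})$ by $F(x^{\star})$ and recognise the critical-point relation of $F$. Care is also needed to keep the subdifferential calculus at the Fr\'echet level (rather than the limiting level), since that is what \Cref{Def: critical points} requires and what Fermat's rule at a proximal minimizer actually delivers.
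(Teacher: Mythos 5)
Your proposal is correct and follows essentially the same route as the paper's proof: Fermat's rule on \eqref{eq: z1127 2200} to get $x^{\star}\in\partial g^*(y^{\star})$, hence $y^{\star}\in\partial g(x^{\star})$ and $Q(x^{\star},y^{\star})=F(x^{\star})$, then Fermat's rule on \eqref{eq: z1127 2201} combined with the block-product formula for $\widehat{\partial}f$ to match \Cref{Def: critical points}, and finally \Cref{proposition: the equivalence of F and Q critical point} for the stationarity of $Q$. No gaps.
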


\begin{proof}
	By the definition of proximity operators and the generalized \textit{Fermat's Rule}, \eqref{eq: z1127 2200} implies $0\in\partial g^*(y^{\star})-x^{\star}$. Therefore, we obtain $y^{\star}\in\partial g(x^{\star})$, $\eta(x^{\star},y^{\star})=g(x^{\star})>0$ and $Q(x^{\star},y^{\star})=F(x^{\star})$. Moreover, \eqref{eq: z1127 2201} implies
	\begin{equation}\label{eq: z1127 2209}
		0\in\widehat{\partial} f_i(x^{\star}_i) + \nabla_i h(x^{\star}) - F(x^{\star}) y^{\star}_i,~~~i\in\mathbb{N}_N.
	\end{equation}
	In view of \Cref{Def: critical points} and $\widehat{\partial}f(x^{\star}) = \widehat{\partial}f_1(x_1^{\star})\times \widehat{\partial}f_2(x_2^{\star})\times ... \times \widehat{\partial}f_N(x_N^{\star})$, we deduce that $x^{\star}$ is a critical point of $F$ with $y^{\star}\in\partial g(x^{\star})$. Besides, with the help of \Cref{proposition: the equivalence of F and Q critical point}, we obtain $0\in\widehat{\partial} Q(x^{\star},y^{\star})$. This completes the proof.
\end{proof}

Motivated by \Cref{prop: stationary point of prox}, we propose a framework of multi-proximity gradient algorithms (MPGA) for solving problem \eqref{problem:root}, which is described in \Cref{alg:MPGA}. Each iteration of MPGA starts with picking an index $i\in\mathbb{N}_N^0$. If the selected $i\in\mathbb{N}_N$, then the algorithm solves a proximal subproblem associated with $x_i$ while keeping the other blocks at their last updated values. Also, we take advantage of the nonmonotone line search scheme in \cite{Chen-Lu-Pong:2016SIAM-OPT, Lu-Xiao:SIAM2017, Tono-Takeda-Gotoh:2017DCforSparseOpt, Wright-Nowak-Figueiredo:IEEE-TSP2009} to find an appropriate step-size which can ensure a certain progress is achieved at the iteration. If the selected $i=0$, then the MPGA simply performs a proximity step of $g^*$ with respect to $y$, while fixing $x$ at its newest updated value. This framework of MPGA is very flexible since the way to choose an index $i\in\mathbb{N}_N^0$ is not specified. We next discuss the
well-definitedness of MPGA. Specifically, according to MPGA, we shall show that the nonmonotone line search can terminate in finite steps, and the sequence $(x^{(t)},y^{(t)})$ generated by MPGA falls into $\dom(Q)$, in the sense that the denominator $\eta$ which the objective $Q$ involves is always positive at each iteration $(x^{(t)},y^{(t)})$. To this end, we introduce the following assumption.

\begin{algorithm}
	\caption{A framework of multi-proximity gradient algorithm (MPGA) for solving problem \eqref{problem:root}.}
	\label{alg:MPGA}
	\renewcommand{\arraystretch}{1.2}
	\begin{tabular}{ll}
		\textbf{Step 0.} & Input $x^{(0)}\in\dom(F)$, $y^{(0)}\in\partial g(x^{(0)})$, \\
		& { $0<\underline{\alpha} \leq  \overline{\alpha}$, $\sigma>0$, $0<\gamma<1$ and an integer $M\geq 0$.  Set $t \leftarrow 0$.} \\[5pt]
		\textbf{Step 1.} & Compute $Q_{(t)} = Q(x^{(t)},y^{(t)})$.\\
		& Compute $l(t) = \max\{j\leq t:Q_{(j)}=\max\{Q_{(s)}:[t-M]_+\leq s\leq t \}  \}$.\\[5pt] 
	\textbf{Step 2.} & Pick $i\in \mathbb{N}^0_N$. If $i=0$, set $\alpha := \widetilde{\alpha}_Y \in [\underline{\alpha},\overline{\alpha}]$ and	go to Step 2-Y; \\
	& otherwise, set $\alpha := \widetilde{\alpha}_{(t)} \in [\underline{\alpha},\overline{\alpha}]$ and go to Step 2-X. \\
	{~~~Step 2-Y.} & Compute $y^{(t)}(\alpha) = \prox_{\alpha g^*}(y^{(t)} + \alpha x^{(t)})$. \\
	& Set $(x^{(t+1)},y^{(t+1)})\leftarrow (x^{(t)},y^{(t)}(\alpha)) $. Go to \textbf{Step 3}. \\
	{~~~Step 2-X.} & Compute $x^{(t)}_i(\alpha) \in \prox_{\alpha f_i}\left(x^{(t)}_i-\alpha\nabla_i h(x^{(t)})+\alpha Q_{(t)} y^{(t)}_i\right)$.\\
	& Let $x^{(t),i}(\alpha)=\left[x^{(t)}_1;x^{(t)}_2;...;x^{(t)}_{i-1}; 
	~ x^{(t)}_i(\alpha);~
	x^{(t)}_{i+1};...;x^{(t)}_N \right]  $. \\
	& If $\zeta(x^{(t),i}(\alpha)) + \frac{\sigma}{2}\|x^{(t),i}(\alpha) - x^{(t)}\|^2_2 \leq Q_{(l(t))} \eta(x^{(t),i}(\alpha),y^{(t)})$. \\
	&        ~~ then set $(x^{(t+1)},y^{(t+1)})\leftarrow (x^{(t),i}(\alpha),y^{(t)}) $ and go to \textbf{Step 3}; \\
	& Else, set $\alpha\leftarrow \alpha\gamma$ and go to Step 2-X. \\[5pt]
	\textbf{Step 3.} & Record $i_{(t)} := i$ and $\alpha_{(t)} := \alpha$, set $t\leftarrow t+1$, and go to \textbf{Step 1}.      
\end{tabular}%
\end{algorithm}

\begin{assumption}\label{Assumption: X is compact}
The level set $\mathcal{X} := \{x\in\mathbb{R}^n: F(x)\leq F(x^{(0)}) \}$ is compact.
\end{assumption}

We remark that boundedness of the level set associated with the extended objective is a standard assumption in the literatures of nonconvex optimization, while its closedness automatically holds in many cases due to the lower semi-continuity of the extended objective. However, $F$ in \eqref{definition: function F} may be not lower semicontinuous in general, which necessitates the closedness condition in \Cref{Assumption: X is compact}. In \cite[Proposition 4.4]{NaZhang-QiaLi:2022SIAM-OPT}, it is shown that $F$ is lower semicontinuous once $\zeta$ and $g$ do not attain zero simultaneously. This condition, in particular, is satisfied for the $L_1/S_K$ sparse signal recovery model \eqref{problem:L1dSK}, where the numerator always takes positive values. Combining this observation and invoking the constraint $\{x\in\mathbb{R}^n:\underline{x}\leq x\leq \overline{x} \} $, we conclude that \Cref{Assumption: X is compact} is fulfilled by MPGA for model \eqref{problem:L1dSK} with an arbitrary initial point $x^{(0)}$. On the other hand, if $\zeta$ and $g$ do attain zero simultaneously, we can still ensure the closedness of $\mathcal{X}$ by properly choosing $x^{(0)}$ as shown in the following proposition.

\begin{proposition}\label{proposition: X is closed}
Suppose that the set $\mathcal{O}:= \{x\in\mathbb{R}^n:\zeta(x)=g(x)=0 \}$ is nonempty. Then, $\mathcal{X}$ is closed, if $x^{(0)}\in\mathbb{R}^n$ satisfies
\begin{equation}\label{eq:assumption for F0}
F(x^{(0)}) < \inf\left\{ \Liminf_{z\to x} F(z):x\in\mathcal{O} \right\}.
\end{equation}
\end{proposition}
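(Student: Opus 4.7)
The plan is to take an arbitrary sequence $\{x^k\} \subseteq \mathcal{X}$ converging to some $x^\star \in \mathbb{R}^n$ and to verify that $x^\star \in \mathcal{X}$. Since $F(x^k) \leq F(x^{(0)}) < +\infty$, each $x^k$ lies in $\dom(F) = \Omega \cap \dom(f)$. I will split the analysis according to whether $x^\star \in \dom(F)$. Recall that $g$ is continuous on $\mathbb{R}^n$ (being convex and real-valued), so $g(x^k) \to g(x^\star)$, and the set $\Omega = \{g > 0\}$ is open.

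The easy case is $x^\star \in \dom(F)$. Then $g(x^\star) > 0$ and $x^\star \in \dom(f)$. Assumption \ref{assumption0}(i) gives continuity of $f$ on $\dom(f)$, hence $f(x^k) \to f(x^\star)$. Since $\Omega$ is open, $x^k \in \Omega$ for large $k$, and Assumption \ref{assumption0}(ii) forces $h(x^k) \to h(x^\star)$. Combined with $g(x^k) \to g(x^\star) > 0$ these give $F(x^k) \to F(x^\star)$, whence $F(x^\star) \leq F(x^{(0)})$ and $x^\star \in \mathcal{X}$.

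If $x^\star \notin \dom(F)$, I aim to derive a contradiction. First consider the subcase $g(x^\star) > 0$ and $x^\star \notin \dom(f)$. Then $f(x^\star) = +\infty$, so lower semicontinuity of $f$ yields $f(x^k) \to +\infty$, while $h(x^k) \to h(x^\star) \in \mathbb{R}$ as above; hence $\zeta(x^k) \to +\infty$, contradicting the bound $\zeta(x^k) = F(x^k) g(x^k) \leq F(x^{(0)}) g(x^k) \to F(x^{(0)}) g(x^\star) < +\infty$. The remaining subcase is $g(x^\star) = 0$, and here lies the main obstacle: $F$ may fail to be lower semicontinuous at such a point, so I cannot simply pass to the limit in $F(x^k) \leq F(x^{(0)})$. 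Instead, I use the squeeze $0 \leq \zeta(x^k) \leq F(x^{(0)}) g(x^k) \to 0$ (with $\zeta \geq 0$ on $\dom(f)$ by Assumption \ref{assumption0}(iv)) to conclude $\zeta(x^k) \to 0$. Lower semicontinuity of $\zeta$ then rules out $x^\star \notin \dom(f)$ (which would give $\zeta(x^\star) = +\infty$) and forces $\zeta(x^\star) = 0$, so $x^\star \in \mathcal{O}$. Since $x^k \neq x^\star$ for all large $k$ (otherwise $x^\star \in \dom(F)$ would contradict $g(x^\star) = 0$), the definition of the inferior limit yields $\liminf_{z \to x^\star} F(z) \leq \liminf_k F(x^k) \leq F(x^{(0)})$, directly contradicting hypothesis \eqref{eq:assumption for F0}. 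This is exactly the role played by \eqref{eq:assumption for F0}: to forbid the sequence from escaping $\mathcal{X}$ through points of $\mathcal{O}$ where $F$ is not lower semicontinuous.
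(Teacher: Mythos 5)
Your proof is correct, and its decisive step is the same as the paper's: when $g(x^{\star})=0$, use $0\le\zeta(x^k)\le F(x^{(0)})g(x^k)\to 0$ together with lower semicontinuity of $\zeta$ to place $x^{\star}$ in $\mathcal{O}$, and then contradict \eqref{eq:assumption for F0} via $\liminf_{z\to x^{\star}}F(z)\le\liminf_k F(x^k)\le F(x^{(0)})$. The only real difference is bookkeeping: the paper dispenses with your Case 1 and Subcase 2a by noting that lower semicontinuity of $\zeta$ and continuity of $g$ give the single inequality $\zeta(x^{\star})\le F(x^{(0)})g(x^{\star})$, which both forces $x^{\star}\in\mathcal{O}$ if $g(x^{\star})=0$ and yields $F(x^{\star})\le F(x^{(0)})$ directly once $g(x^{\star})>0$; this avoids invoking the continuity of $f$ on $\dom(f)$ and of $h$ on $\Omega$, which your benign cases use but do not actually need.
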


\begin{proof}
We shall show that $\mathcal{X}$ is closed by verifying that each accumulation point of $\mathcal{X}$ belongs to $\mathcal{X}$ when \eqref{eq:assumption for F0} is satisfied. Let $x^{\star}$ be an accumulation point of $\mathcal{X}$ and some sequence $\{x^k:k\in\mathbb{N} \}\subseteq \mathcal{X} $ converge to $x^{\star}$. Then there holds $\zeta(x^k)\leq F(x^{(0)})g(x^k)$ for any $k\in\mathbb{N}$ due to $x^k\in\mathcal{X}$, and $\zeta(x^{\star})\leq F(x^{(0)})g(x^{\star})$ follows from the lower semi-continuity of $\zeta$ and the continuity of $g$.
Assume that $g(x^{\star}) = 0$. This together with $\zeta(x^{\star})\leq F(x^{(0)})g(x^{\star})$ leads to $x^{\star}\in\mathcal{O}$. By invoking this and \eqref{eq:assumption for F0}, we get $F(x^{(0)}) < \Liminf_{k\to\infty} F(x^k)$, which contradicts the fact that $\Liminf_{k\to\infty} F(x^k)\leq F(x^{(0)})$. Hence, we conclude that $g(x^{\star}) > 0$ and thus $x^{\star}\in \mathcal{X}$.
\end{proof}

For the $L_1/L_2$ sparse signal recovery model \eqref{problem:L1dL2}, the zero vector is the unique point at which both $\zeta$ and $g$ vanish. According to \Cref{proposition: X is closed} and model \eqref{problem:L1dL2}, if $x^{(0)}\in\mathbb{R}^n$ satisfies $F(x^{(0)}) <  \Liminf_{z\to 0} F(z)  = 1+\frac{\lambda}{2}\|b\|_2^2$, then the level set $\mathcal{X}$ is closed. This with the boundedness of the constrain $\{x\in\mathbb{R}^n:\underline{x}\leq x\leq \overline{x} \} $ tells that \Cref{Assumption: X is compact} is satisfied by MPGA for model \eqref{problem:L1dL2} given a suitable initial point $x^{(0)}$.

\begin{remark}\label{Remark: h is global Lips and sup of g}
\Cref{Assumption: X is compact} directly yields the following two results, which will be frequently used in our analysis for MPGA: \StatementNum{1} There exists $\overline{g}_{\mathcal{X}}>0$ such that $\eta(x,y)\leq g(x)\leq \overline{g}_{\mathcal{X}}$ holds for any $(x,y)\in\mathcal{X}\times\mathbb{R}^n$; \StatementNum{2} In view of \cite[Chapter 1, Exercise 7.5(c)]{Clarke:2008BookNonsmooth_analysis_and_control_theory}, there exist some $\mu>0$ and $L>0$ such that $\|\nabla h(u)-\nabla h(v)\|_2\leq L\|u-v\|_2$ holds for any $u,v\in\mathcal{X}_{\mu}$, where $\mathcal{X}_{\mu} := \{z\in\mathbb{R}^n: \dist(z,\mathcal{X})\leq \mu\}$.  
\end{remark}

To discuss that the proposed algorithm MPGA is well-defined, we need the following three technical lemmas. The first two focus on Step 2-Y, and the last one is on Step 2-X.




\begin{lemma}\label{Lemma: Step 2-Y}
Let $(x,y)\in\mathbb{R}^n\times\mathbb{R}^n$, $\alpha>0$, and $y^+(\alpha) = \prox_{\alpha g^*}(y + \alpha x)$. Then the following statements hold:
\begin{enumerate}[label = {\upshape(\roman*)}]
\item $\frac{y-y^+(\alpha)}{\alpha} + x \in \partial g^*(y^+(\alpha))$;
\item $\eta(x,y) + \frac{\|y - y^+(\alpha)\|^2_2}{\alpha} \leq \eta(x,y^+(\alpha))\leq g(x)$;
\item if $(x,y)\in\dom(Q)$, then $(x,y^+(\alpha))\in\dom(Q)$, and
\begin{equation}\label{eq: property of Step 2-Y ObjValue}
	\zeta(x) + \frac{c}{\alpha}\|y - y^+(\alpha)\|^2_2 \leq c \eta(x,y^+(\alpha))
\end{equation}
holds for any $c\in[Q(x,y),+\infty)$.
\end{enumerate}
\end{lemma}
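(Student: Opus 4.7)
The plan is to derive all three items from the optimality condition of the proximal subproblem defining $y^+(\alpha)$, together with the convexity of $g^*$ and the Fenchel--Young inequality.

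For item \StatementNum{1}, I would apply the generalized Fermat's rule to the subproblem $\min_z \{g^*(z) + \frac{1}{2\alpha}\|z - y - \alpha x\|_2^2\}$. Since $g^*$ is a proper closed convex function (as the Fenchel conjugate of a proper closed convex $g$, recalled at the end of Section~2.1), the optimality condition at the minimizer $y^+(\alpha)$ reads $0 \in \partial g^*(y^+(\alpha)) + \alpha^{-1}(y^+(\alpha) - y - \alpha x)$, which rearranges to the claimed inclusion.

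For item \StatementNum{2}, the key step is to exploit the convex subgradient inequality at $y^+(\alpha)$ using the subgradient obtained in item \StatementNum{1}. This gives
\begin{equation*}
g^*(y) \geq g^*(y^+(\alpha)) + \Big\langle \tfrac{y - y^+(\alpha)}{\alpha} + x,\; y - y^+(\alpha)\Big\rangle = g^*(y^+(\alpha)) + \tfrac{1}{\alpha}\|y - y^+(\alpha)\|_2^2 + \langle x, y - y^+(\alpha)\rangle.
\end{equation*}
Plugging this into the definition $\eta(x,\cdot) = \langle x,\cdot\rangle - g^*(\cdot)$, the cross terms $\langle x,\cdot\rangle$ cancel and the lower bound $\eta(x,y) + \alpha^{-1}\|y-y^+(\alpha)\|_2^2 \leq \eta(x,y^+(\alpha))$ pops out. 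The upper bound $\eta(x,y^+(\alpha)) \leq g(x)$ is immediate from the Fenchel--Young inequality $\langle x, y^+(\alpha)\rangle \leq g(x) + g^*(y^+(\alpha))$, using that $g$ is convex, real-valued and (being continuous) closed so that $g^{**} = g$.

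For item \StatementNum{3}, the assumption $(x,y) \in \dom(Q)$ gives $\eta(x,y) > 0$, and item \StatementNum{2} then yields $\eta(x,y^+(\alpha)) \geq \eta(x,y) > 0$, placing $(x, y^+(\alpha))$ in $\dom(Q)$. For the final inequality, I would multiply the lower bound from \StatementNum{2} by $c > 0$ to get $c\eta(x,y^+(\alpha)) \geq c\eta(x,y) + (c/\alpha)\|y-y^+(\alpha)\|_2^2$, and then use $c \geq Q(x,y) = \zeta(x)/\eta(x,y)$ (so $c\eta(x,y) \geq \zeta(x)$) to replace $c\eta(x,y)$ by $\zeta(x)$ on the right-hand side. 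I do not anticipate any serious obstacle: everything rests on the single subgradient inequality derived in \StatementNum{1}, and the bookkeeping for \StatementNum{2} and \StatementNum{3} is mechanical once that ingredient is in place.
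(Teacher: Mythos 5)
Your proposal is correct and follows essentially the same route as the paper: the proximal optimality condition gives (i), the convex subgradient inequality with that subgradient plus Fenchel--Young gives (ii), and multiplying (ii) by $c$ together with $\zeta(x)=Q(x,y)\eta(x,y)\leq c\,\eta(x,y)$ gives (iii).
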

\begin{proof}
By the definition of proximity operators and the convexity of $g^*$, we derive that
$0\in\partial (\alpha g^*)(y^+(\alpha)) + (y^+(\alpha) - y -\alpha x),$ 
and thus obtain Item \StatementNum{1}. From Item \StatementNum{1}, we have 
$g^*(y) \geq g^*(y^+(\alpha)) + \innerP{\frac{1}{\alpha}(y-y^+(\alpha)) + x}{y-y^+(\alpha)} $, which implies the first relation of Item \StatementNum{2}. By invoking the definition of $\eta$, the second relation of Item \StatementNum{2} follows from the \textit{Fenchel-Young Inequality}. 

We finally prove Item \StatementNum{3}.	Suppose $(x,y)\in\dom(Q)$. Then Item \StatementNum{2} and the fact that $\eta(x,y)>0$  leads to $\eta(x,y^+(\alpha)) > 0$. Hence, the assertion $(x,y^+(\alpha))\in\dom(Q)$ follows from $\eta(x,y^+(\alpha)) > 0$ and the fact that $(x,y^+(\alpha))\in\dom(f)\times\dom(g^*)$. By multiplying $c$ on the both sizes of Item \StatementNum{2}, \eqref{eq: property of Step 2-Y ObjValue} is derived upon the fact $\zeta(x) = Q(x,y)\eta(x,y) \leq c\eta(x,y)$.
\end{proof}

\begin{lemma}\label{Lemma: y falls into Y}
Suppose that \Cref{Assumption: X is compact} holds. Let $x\in\mathcal{X}$, $(x,y)\in\dom(Q)$, and $\underline{\alpha}>0 $. For any $\alpha\in[\underline{\alpha},+\infty)$, the vector $y^+(\alpha) = \prox_{\alpha g^*}(y + \alpha x)$ falls into the compact set $\mathcal{Y} := \cup_{z\in\mathcal{X}_{\underline{\alpha}}}\partial g(z)$ with $\mathcal{X}_{\underline{\alpha}} :=\{z\in\mathbb{R}^n: \dist^2(z,\mathcal{X}) \leq\overline{g}_{\mathcal{X}} /\underline{\alpha}  \}$, where $\overline{g}_{\mathcal{X}}>0$ is given by \Cref{Remark: h is global Lips and sup of g} \StatementNum{1}.
\end{lemma}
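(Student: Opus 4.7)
The plan is to produce an explicit compact neighborhood around $\mathcal{X}$ in which the primal variable implicitly associated to $y^+(\alpha)$ must live, and then appeal to Fenchel duality to conclude that $y^+(\alpha)$ lies in $\partial g$ of some point in that neighborhood. First I would invoke \Cref{Lemma: Step 2-Y}~\StatementNum{1} with the pair $(x,y)$ and parameter $\alpha$, which gives
\[
z := \frac{y-y^+(\alpha)}{\alpha} + x \in \partial g^*(y^+(\alpha)).
\]
Since $g$ is a real-valued convex function, its conjugate $g^*$ is proper closed convex and the Fenchel--Young equivalence recalled at the end of Section~\ref{section:Notation_and_preliminaries} yields $y^+(\alpha)\in\partial g(z)$. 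Thus it suffices to certify that $z\in\mathcal{X}_{\underline{\alpha}}$, which will immediately give $y^+(\alpha)\in \mathcal{Y}$.

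Next I would estimate $\dist(z,\mathcal{X})$. Because $x\in\mathcal{X}$, trivially $\dist(z,\mathcal{X})\leq \|z-x\|_2 = \|y-y^+(\alpha)\|_2/\alpha$. To bound the right-hand side, apply \Cref{Lemma: Step 2-Y}~\StatementNum{2}, which gives
\[
\frac{\|y-y^+(\alpha)\|_2^2}{\alpha}\;\leq\; \eta(x,y^+(\alpha)) - \eta(x,y)\;\leq\; g(x) - \eta(x,y).
\]
Since $(x,y)\in\dom(Q)$ we have $\eta(x,y)>0$, and since $x\in\mathcal{X}$ the bound in \Cref{Remark: h is global Lips and sup of g}~\StatementNum{1} yields $g(x)\leq \overline{g}_{\mathcal{X}}$. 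Therefore
\[
\dist^2(z,\mathcal{X}) \;\leq\; \frac{\|y-y^+(\alpha)\|_2^2}{\alpha^2} \;\leq\; \frac{\overline{g}_{\mathcal{X}}}{\alpha} \;\leq\; \frac{\overline{g}_{\mathcal{X}}}{\underline{\alpha}},
\]
the last inequality using $\alpha\geq \underline{\alpha}$. This gives $z\in\mathcal{X}_{\underline{\alpha}}$ and hence $y^+(\alpha)\in\mathcal{Y}$.

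Finally I would verify that the set $\mathcal{Y}$ is indeed compact as asserted. Compactness of $\mathcal{X}$ and the fact that $\mathcal{X}_{\underline{\alpha}}$ is a closed $\sqrt{\overline{g}_{\mathcal{X}}/\underline{\alpha}}$-tube around $\mathcal{X}$ make $\mathcal{X}_{\underline{\alpha}}$ compact. Because $g$ is a real-valued convex function on $\mathbb{R}^n$, its subdifferential mapping is locally bounded and the union $\bigcup_{z\in\mathcal{X}_{\underline{\alpha}}}\partial g(z)$ is bounded (as recalled at the end of Section~\ref{section:Notation_and_preliminaries} via \cite[Proposition 5.4.2]{Bertsekas:Convexoptimizationtheory}); closedness follows from the outer-semicontinuity of $\partial g$ for convex $g$. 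None of the steps is delicate: the only minor subtlety is keeping track that the inequality in \Cref{Lemma: Step 2-Y}~\StatementNum{2} only requires membership in $\dom(Q)$ (to ensure $\eta(x,y)>0$) rather than any smoothness of $g^*$, and that the definition of $\mathcal{X}_{\underline{\alpha}}$ uses the squared distance, matching precisely the bound $\|y-y^+(\alpha)\|_2^2/\alpha^2 \leq \overline{g}_{\mathcal{X}}/\underline{\alpha}$ produced above.
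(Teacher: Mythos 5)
Your proposal is correct and follows essentially the same route as the paper: use \Cref{Lemma: Step 2-Y}~\StatementNum{1} plus the Fenchel--Young equivalence to write $y^+(\alpha)\in\partial g(z)$ with $z=\frac{1}{\alpha}(y-y^+(\alpha))+x$, bound $\|z-x\|_2^2\leq \overline{g}_{\mathcal{X}}/\underline{\alpha}$ via \Cref{Lemma: Step 2-Y}~\StatementNum{2} and $\eta(x,y)>0$, and conclude compactness of $\mathcal{Y}$ from boundedness of subgradients on the compact tube $\mathcal{X}_{\underline{\alpha}}$ together with closedness of the subdifferential graph (which the paper verifies by an explicit subsequence extraction, matching your appeal to outer semicontinuity).
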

\begin{proof}
We first prove $y^+(\alpha)\in\mathcal{Y}$. On the one hand, \Cref{Lemma: Step 2-Y} \StatementNum{2} implies that $\frac{1}{\alpha^2}\|y-y^+(\alpha)\|^2_2 \leq g(x)/\alpha \leq g(x)/\underline{\alpha}$ due to  $\eta(x,y)>0$ following from $(x,y)\in\dom(Q)$. On the other hand, from \Cref{Lemma: Step 2-Y} \StatementNum{1}, we derive $w^+ := \frac{1}{\alpha}(y-y^+(\alpha))+x\in\partial g^*(y^+(\alpha))$, or equivalently, $y^+(\alpha)\in\partial g(w^+)$. Therefore, we conclude $y^+(\alpha)\in\mathcal{Y}$ from $y^+(\alpha)\in\partial g(w^+)$, $\|w^+-x\|^2_2 = \frac{1}{\alpha^2}\|y-y^+(\alpha)\|^2_2\leq g(x)/\underline{\alpha}$ and the fact $x\in\mathcal{X}$.

Next, we shall show that $\mathcal{Y}$ is compact. The boundedness of $\mathcal{Y}$ follows from the boundedness of $\mathcal{X}_{\underline{\alpha}}$. We demonstrate the closedness of $\mathcal{Y}$ by verifying that any accumulation point $y^{\star}$ belongs to the set $\mathcal{Y}$. To this end, we let $\{y^k:k\in\mathbb{N} \}\subseteq \mathcal{Y}$ and $\lim_{k\to\infty}y^k = y^{\star}$. Then, there exists $w^k\in \mathcal{X}_{\underline{\alpha}}$ such that $y^k\in\partial g(w^k)$. Due to the compactness of $\mathcal{X}_{\underline{\alpha}}$, there exists a subsequence $\{w^{k_j}:j\in\mathbb{N} \}$ converging to some $w^{\star}\in \mathcal{X}_{\underline{\alpha}}$. Therefore, by letting $j\to\infty$ in $y^{k_j}\in\partial g(w^{k_j})$, we have $y^{\star}\in\partial g(w^{\star})$. Since $w^{\star}\in\mathcal{X}_{\underline{\alpha}}$, we obtain the closedness of $\mathcal{Y}$. This completes the proof.
\end{proof}

Combining the set $\mathcal{X}$ defined in \Cref{Assumption: X is compact} and the set $\mathcal{Y}$ defined in \Cref{Lemma: y falls into Y}, we study the behavior of MPGA within the set $\mathcal{S}$ which is defined by 
\begin{equation}\label{eq: definition of S}
\mathcal{S}:=\{(x,y)\in\mathcal{X}\times\mathcal{Y}:\eta(x,y)>0 \}.
\end{equation}
In particular, one can verify that $\mathcal{S}$ is a bounded subset of $\dom(Q)$ if \Cref{Assumption: X is compact} holds. 
The following lemma concerns the Step 2-X in MPGA restricted to $\mathcal{S}$.



\begin{lemma}\label{proposition: Step 2-X line-search}
Suppose that \Cref{Assumption: X is compact} holds. For $(x,y)\in \mathcal{S}$ satisfying $Q(x,y)\leq F(x^{(0)})$, let $x^{+}_i(\alpha) \in \prox_{\alpha f_i}\left(x_i-\alpha\nabla_i h(x)+\alpha Q(x,y) y_i\right)$, and $x^{+,i}(\alpha)=[x_1;...;x_{i-1};x^{+}_i(\alpha);x_{i+1};...;x_N]  $.  Then the following statements hold:
\begin{enumerate}[label = {\upshape(\roman*)}]
\item For any $\sigma>0$, there exists $\underline{\alpha}_{\mathcal{S},\sigma}>0$, such that 
\begin{equation}\label{eq: proposition: Step 2-X line-search}
	\zeta(x^{+,i}(\alpha)) + \frac{\sigma}{2}\|x^{+,i}(\alpha) - x\|^2_2 \leq c \eta(x^{+,i}(\alpha),y)
\end{equation}
holds for any $(x,y)\in \mathcal{S}$ with $Q(x,y)\leq F(x^{(0)})$, $\alpha\in(0,\underline{\alpha}_{\mathcal{S},\sigma})$, $i\in\mathbb{N}_N$ and $c\in[Q(x,y),+\infty)$;
\item If $x^{+,i}(\alpha)$ satisfies \eqref{eq: proposition: Step 2-X line-search} for some $c\in[0,F(x^{(0)})]$, then $(x^{+,i}(\alpha),y)\in\mathcal{S}$.
\end{enumerate}
\end{lemma}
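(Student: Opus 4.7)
My plan for part (i) is to combine the proximity-operator inequality for $f_i$ with the descent lemma for $h$, then choose the step size so that the quadratic descent term dominates. Starting from $x^+_i(\alpha) \in \prox_{\alpha f_i}(x_i - \alpha \nabla_i h(x) + \alpha Q(x,y) y_i)$ and testing its prox objective at $x_i$, together with $f(x^{+,i}(\alpha)) - f(x) = f_i(x^+_i(\alpha)) - f_i(x_i)$ from block-separability, I obtain
\begin{equation*}
f(x^{+,i}(\alpha)) - f(x) + \frac{1}{2\alpha} \|x^{+,i}(\alpha) - x\|_2^2 \leq \innerP{Q(x,y) y_i - \nabla_i h(x)}{x^+_i(\alpha) - x_i}.
\end{equation*}
Compactness of $\mathcal{S}$, together with $Q \leq F(x^{(0)})$, the continuity of $\nabla h$, and the continuity of $f$ on $\dom(f)$ (\Cref{assumption0}\StatementNum{1}, which forces each $f_i$ to be continuous on $\dom(f_i)$), furnish uniform constants $M_1, M_2 > 0$ with $f_i(x_i) - \inf f_i \leq M_1$ and $\|Q(x,y) y_i - \nabla_i h(x)\|_2 \leq M_2$ for every $(x,y) \in \mathcal{S}$ and $i \in \mathbb{N}_N$. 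Using $f_i(x^+_i(\alpha)) \geq \inf f_i$ and Cauchy-Schwarz, a Young-type rearrangement of the displayed inequality yields $d(\alpha) := \|x^{+,i}(\alpha) - x\|_2 \leq M_2 \alpha + \sqrt{M_2^2 \alpha^2 + 2 M_1 \alpha}$, so $d(\alpha) \to 0$ uniformly as $\alpha \to 0^+$.

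Hence there exists $\alpha_0 > 0$ so that $d(\alpha) \leq \mu$ for every $\alpha \in (0, \alpha_0]$ and every $(x,y,i)$, putting the segment from $x$ to $x^{+,i}(\alpha)$ inside $\mathcal{X}_\mu$ where $\nabla h$ is $L$-Lipschitz (\Cref{Remark: h is global Lips and sup of g}\StatementNum{2}). The standard descent lemma then gives $h(x^{+,i}(\alpha)) - h(x) \leq \innerP{\nabla_i h(x)}{x^+_i(\alpha) - x_i} + \frac{L}{2} d(\alpha)^2$. Adding this to the prox inequality, and using the identity $\eta(x^{+,i}(\alpha), y) - \eta(x, y) = \innerP{y_i}{x^+_i(\alpha) - x_i}$ together with $\zeta(x) = Q(x,y) \eta(x, y)$, I arrive at
\begin{equation*}
\zeta(x^{+,i}(\alpha)) + \left(\frac{1}{2\alpha} - \frac{L}{2}\right) \|x^{+,i}(\alpha) - x\|_2^2 \leq Q(x,y)\, \eta(x^{+,i}(\alpha), y).
\end{equation*}
Taking $\underline{\alpha}_{\mathcal{S},\sigma} := \min\{\alpha_0, 1/(L+\sigma)\}$ absorbs $L/2$ into $\sigma/2$. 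Because the left-hand side is nonnegative (\Cref{assumption0}\StatementNum{4}) and $Q(x,y) \geq 0$, this forces $\eta(x^{+,i}(\alpha), y) \geq 0$, so replacing $Q(x,y)$ by any $c \geq Q(x,y)$ preserves the inequality and completes part (i).

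For part (ii), the strategy is to use the hypothesised inequality itself to propagate membership in $\mathcal{S}$. First, if $\eta(x^{+,i}(\alpha), y) \leq 0$, then $c \eta(x^{+,i}(\alpha), y) \leq 0$ and the nonnegativity of $\zeta$ forces both sides to vanish, in particular $\|x^{+,i}(\alpha) - x\|_2 = 0$; but then $\eta(x^{+,i}(\alpha), y) = \eta(x, y) > 0$ since $(x,y) \in \mathcal{S}$, a contradiction. Thus $\eta(x^{+,i}(\alpha), y) > 0$, and combined with the Fenchel-Young bound $\eta(x^{+,i}(\alpha), y) \leq g(x^{+,i}(\alpha))$ (since $g = g^{**}$), this yields $g(x^{+,i}(\alpha)) > 0$, so $x^{+,i}(\alpha) \in \Omega \cap \dom(f)$. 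The hypothesis then gives $\zeta(x^{+,i}(\alpha)) \leq c \eta(x^{+,i}(\alpha), y) \leq c\, g(x^{+,i}(\alpha)) \leq F(x^{(0)})\, g(x^{+,i}(\alpha))$, whence $F(x^{+,i}(\alpha)) \leq F(x^{(0)})$ and $x^{+,i}(\alpha) \in \mathcal{X}$. Since $y \in \mathcal{Y}$ is unchanged, $(x^{+,i}(\alpha), y) \in \mathcal{S}$.

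The main obstacle is the uniformity of $\underline{\alpha}_{\mathcal{S},\sigma}$: every constant in the argument ($M_1, M_2, L, \mu$) must come from a compactness or boundedness statement on the whole set $\mathcal{S}$ and all $i \in \mathbb{N}_N$ rather than from pointwise estimates, which is precisely what \Cref{Assumption: X is compact} together with \Cref{Remark: h is global Lips and sup of g} is designed to supply; once these uniform bounds are in place, the argument above is essentially a careful bookkeeping of the two ingredients (prox inequality plus descent lemma) that is standard for nonmonotone line-search schemes.
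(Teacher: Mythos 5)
Your proposal is correct and follows essentially the same route as the paper's own proof: the prox inequality plus Cauchy--Schwarz to get a uniform bound on $\|x^{+,i}(\alpha)-x\|_2$ that shrinks with $\alpha$, the descent lemma for $h$ on the enlarged set where $\nabla h$ is $L$-Lipschitz, the identity $\zeta(x)+Q(x,y)\innerP{x^{+,i}(\alpha)-x}{y}=Q(x,y)\,\eta(x^{+,i}(\alpha),y)$, the choice $\alpha<1/(L+\sigma)$, and for part (ii) the sign argument forcing $\eta(x^{+,i}(\alpha),y)>0$ followed by the Fenchel--Young bound $F(x^{+,i}(\alpha))\leq Q(x^{+,i}(\alpha),y)\leq c\leq F(x^{(0)})$. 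One cosmetic point: $\mathcal{S}$ itself is only bounded, not necessarily compact, so the uniform constants $M_1,M_2$ should be justified via the compact set $\mathcal{X}\times\mathcal{Y}\supseteq\mathcal{S}$, which is in fact what your estimates use.
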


\begin{proof}
We first prove Item \StatementNum{1}.
It is the result of the proximity operator that, for any $i\in\mathbb{N}_N$, there holds
\begin{equation*}
f_i(x^+_i(\alpha)) + \frac{\|x^+_i(\alpha)-x_i\|^2_2}{2\alpha}
+\innerP{x^+_i(\alpha)-x_i}{ \nabla_i h(x)- Q(x,y)y_i} -  f_i(x_i)\leq 0.
\end{equation*}
This implies
\begin{equation}\label{eq: z1113 1720}
f(x^{+,i}(\alpha)) + \frac{\|x^{+,i}(\alpha)-x\|^2_2}{2 \alpha}
+\innerP{x^{+,i}(\alpha)-x}{ \nabla h(x)- Q(x,y)y} -  f(x)\leq 0.
\end{equation}
Utilizing the \textit{Cauchy–Schwarz Inequality}, we further obtain 
\begin{equation*} 
\frac{\|x^{+,i}(\alpha)-x\|^2_2}{2 \alpha} - \|\nabla h(x)-Q(x,y)y\|_2\|x^{+,i}(\alpha)-x\|_2 + f(x^{+,i}(\alpha))-f(x) \leq 0,
\end{equation*}
which leads to
\begin{equation}\label{eq: z1113 1731}
\begin{split}
	\|x^{+,i}(\alpha)-x\|_2 &\leq \alpha\|\nabla h(x)-Q(x,y)y\|_2 \\
	&\indent +\sqrt{\alpha^2 \|\nabla h(x)-Q(x,y)y\|_2^2 + 2\alpha(f(x) - f(x^{+,i}(\alpha)))}.
\end{split}
\end{equation}
On the right side of \eqref{eq: z1113 1731}, the term $\|\nabla h(x)-Q(x,y)y\|_2$ is bounded for any $(x,y)\in\mathcal{X}\times\mathcal{Y}$ due to the continuity of $\nabla h$ on $\mathcal{X}$, the compactness of $\mathcal{X}\times\mathcal{Y}$ and $Q(x,y)\leq F(x^{(0)})$, Besides, the term $(f(x) - f(x^{+,i}(\alpha))$ is bounded above, according to any $x\in\mathcal{X}$ and $\alpha>0$, since $f$ is continuous on the compact $\mathcal{X}\subseteq \dom(f)$ and bounded below over $\mathbb{R}^n$ (see \Cref{assumption0}). It means that the value of $\|x^{+,i}(\alpha)-x\|_2$ can be narrowed down by the positive scalar $\alpha$ with the help of \eqref{eq: z1113 1731}. Therefore, recalling the statement in \Cref{Remark: h is global Lips and sup of g} \StatementNum{2} that $h$ is globally Lipschitz differentiable on $\mathcal{X}_{\mu}$ with the modulus $L>0$,
we deduce from \eqref{eq: z1113 1731} that there exists some small enough  $\underline{\alpha}_{\mathcal{S},\sigma}\in(0,\frac{1}{\sigma+L})$ such that $\|x^{+,i}(\alpha)-x\|_2\leq\mu/2 $ for any $\alpha\in(0,\underline{\alpha}_{\mathcal{S},\sigma})$, and hence we have 
\begin{equation}\label{eq: z0603 3}
\alpha h(x^{+,i}(\alpha))\leq \alpha\left(h(x) + \innerP{\nabla h(x)}{x^{+,i}(\alpha) - x} + \frac{L}{2}\|x^{+,i}(\alpha)-x\|^2_2\right)
\end{equation}
for any $\alpha\in(0,\underline{\alpha}_{\mathcal{S},\sigma})$. Combining with \eqref{eq: z1113 1720} and \eqref{eq: z0603 3} and rearranging terms, we get
\begin{equation}\label{eq: z1113 1813}
\begin{split}
	\alpha\zeta(x^{+,i}(\alpha))+\frac{1-\alpha L}{2}\|x^{+,i}(\alpha)-x\|^2_2
	& \leq
	\alpha\zeta(x)+\alpha\innerP{x^{+,i}(\alpha)-x}{Q(x,y)y}\\
	& =\alpha Q(x,y) \eta(x^{+,i}(\alpha),y)
\end{split}
\end{equation}
for any $\alpha\in(0,\underline{\alpha}_{\mathcal{S},\sigma})$. Due to the non-negative of the left side of \eqref{eq: z1113 1813} and $Q\geq 0$, the equation \eqref{eq: z1113 1813} implies $\eta(x^{+,i}(\alpha),y)\geq 0$. Therefore, we deduce Item \StatementNum{1} by multiplying $1/\alpha$ on the both sides of  \eqref{eq: z1113 1813} and noting that $\alpha< \underline{\alpha}_{\mathcal{S},\sigma} < \frac{1}{\sigma+L}$ and $Q(x,y) \leq c$.

We next prove Item \StatementNum{2}. For the case $\|x^{+,i}(\alpha) - x\|_2 = 0$, Item \StatementNum{2} is derived from $(x^{+,i}(\alpha),y) = (x,y)\in\mathcal{S} $. For the case $\|x^{+,i}(\alpha) - x\|_2>0$, the relation \eqref{eq: proposition: Step 2-X line-search} implies $\eta(x^{+,i}(\alpha),y)>0$ due to $\zeta\geq 0$ and $c\geq 0$. By dividing by $\eta(x^{+,i}(\alpha),y)$ on the both sides of \eqref{eq: proposition: Step 2-X line-search}, we have $c\geq \zeta(x^{+,i}(\alpha)) / \eta(x^{+,i}(\alpha),y) = Q(x^{+,i}(\alpha),y)$, and then $c\geq F(x^{+,i}(\alpha))$ follows from the relation $Q(x^{+,i}(\alpha),y)\geq F(x^{+,i}(\alpha))$, which follows from the \textit{Fenchel-Young Inequality} and the definitions of $Q$ and $F$. Hence, when $c\leq F(x^{(0)})$, we conclude that   $x^{+,i}(\alpha)\in\mathcal{X} $, and derive $(x^{+,i}(\alpha),y)\in\mathcal{S}$ from the definition of $\mathcal{S}$ (see \eqref{eq: definition of S}). 
\end{proof}

Now we are ready to demonstrate that MPGA is the well-defined.


\begin{theorem}\label{theorem: MPGA is well-defined}
Suppose that \Cref{Assumption: X is compact} holds. Then the following statements hold:
\begin{enumerate}[label = {\upshape(\roman*)}, leftmargin=0.8cm, itemindent=0cm]
\item Step 2-X terminates in finite steps at some $\alpha\geq \underline{\alpha}_{\star}$, where $\underline{\alpha}_{\star} := \min\{\underline{\alpha},\underline{\alpha}_{\mathcal{S},\sigma} \}\gamma$ with $\underline{\alpha}_{\mathcal{S},\sigma}>0$ given by \Cref{proposition: Step 2-X line-search} \StatementNum{1}. In other words, $\alpha_{(t)}\geq \underline{\alpha}_{\star}$ holds for any $t\in\mathbb{N}$;
\item The sequence $\{(x^{(t)},y^{(t)}):t\in\mathbb{N} \}$ generated by MPGA falls into $\mathcal{S}$. For any $t\in\mathbb{N}$, we have
\begin{align}
	&\zeta (x^{(t+1)})+ C_0
	(\|x^{(t+1)} - x^{(t)}\|^2_2 + Q_{(l(t))}\|y^{(t+1)}-y^{(t)}\|^2_2)  \leq Q_{(l(t))}\eta_{(t+1)} ,\label{eq: descent lemma} \\
	&Q(x^{(t+1)},y^{(t+1)})\leq Q_{(l(t+1))} \leq Q_{(l(t))}\leq F(x^{(0)}) ,\label{eq: descent lemma2}
\end{align}
where $C_0:=\min\{\sigma/2,1/\overline{\alpha} \}>0$ and $\eta_{(t+1)} := \eta(x^{(t+1)},y^{(t+1)})$.
\end{enumerate}
\end{theorem}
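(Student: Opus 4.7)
The plan is to establish both statements simultaneously by induction on $t$, maintaining the invariant that $(x^{(t)}, y^{(t)})\in\mathcal{S}$ together with the bound $Q_{(l(t))}\le F(x^{(0)})$. The base case $t=0$ follows from $x^{(0)}\in\dom(F)\subseteq\mathcal{X}$ (whence $x^{(0)}\in\mathcal{X}_{\underline{\alpha}}$ and hence $y^{(0)}\in\partial g(x^{(0)})\subseteq\mathcal{Y}$), together with the Fenchel--Young equality, which gives $\eta(x^{(0)}, y^{(0)}) = g(x^{(0)}) > 0$ and $Q_{(l(0))} = Q_{(0)} = F(x^{(0)})$.

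For the inductive step I would split into the two update branches. When Step 2-Y is executed, $x^{(t+1)} = x^{(t)}$ and $y^{(t+1)} = \prox_{\alpha g^*}(y^{(t)} + \alpha x^{(t)})$ with $\alpha\in[\underline{\alpha},\overline{\alpha}]$; combining Lemma~\ref{Lemma: Step 2-Y}(iii) applied with $c = Q_{(l(t))}\ge Q(x^{(t)},y^{(t)})$ (the latter holding by definition of $l(t)$) and Lemma~\ref{Lemma: y falls into Y} produces $(x^{(t+1)}, y^{(t+1)})\in\mathcal{S}$ and the sufficient-decrease bound \eqref{eq: descent lemma}, because $1/\alpha\ge 1/\overline{\alpha}\ge C_0$ controls the $y$-term while $\|x^{(t+1)}-x^{(t)}\|_2 = 0$ handles the $x$-term trivially.

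When Step 2-X is executed, I would invoke Lemma~\ref{proposition: Step 2-X line-search}(i) with the same $c = Q_{(l(t))}\in[Q(x^{(t)},y^{(t)}), F(x^{(0)})]$ to conclude that the acceptance test \eqref{eq: proposition: Step 2-X line-search} passes as soon as the trial step drops below $\underline{\alpha}_{\mathcal{S},\sigma}$. Since the initial trial lies in $[\underline{\alpha},\overline{\alpha}]$ and is scaled by $\gamma$ per rejection, the search terminates in finitely many rounds with an accepted step $\alpha_{(t)}\ge\underline{\alpha}_{\star}$: either $\widetilde{\alpha}_{(t)}<\underline{\alpha}_{\mathcal{S},\sigma}$ and is accepted immediately with $\alpha_{(t)}\ge\underline{\alpha}\ge\underline{\alpha}\gamma$, or the last rejected trial is at least $\underline{\alpha}_{\mathcal{S},\sigma}$ and the next one is at least $\underline{\alpha}_{\mathcal{S},\sigma}\gamma$. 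This establishes Statement~(i); the acceptance inequality together with $\sigma/2\ge C_0$ and $y^{(t+1)} = y^{(t)}$ yields \eqref{eq: descent lemma}, while Lemma~\ref{proposition: Step 2-X line-search}(ii) with $c\le F(x^{(0)})$ places $(x^{(t+1)}, y^{(t+1)})$ back in $\mathcal{S}$.

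To close the induction I would divide \eqref{eq: descent lemma} by $\eta(x^{(t+1)}, y^{(t+1)}) > 0$, obtaining $Q_{(t+1)}\le Q_{(l(t))}$; since $\{[t+1-M]_+,\dots,t+1\}\subseteq\{[t-M]_+,\dots,t+1\}$, this propagates to $Q_{(l(t+1))}\le\max\{Q_{(l(t))}, Q_{(t+1)}\}\le Q_{(l(t))}\le F(x^{(0)})$, yielding both \eqref{eq: descent lemma2} and the invariant for the next iteration. The main delicate point is coordinating the parameter $c = Q_{(l(t))}$ in the invocations of Lemmas~\ref{Lemma: Step 2-Y}(iii) and \ref{proposition: Step 2-X line-search}: it must simultaneously satisfy $c\ge Q(x^{(t)}, y^{(t)})$ (automatic from the definition of $l(t)$) and $c\le F(x^{(0)})$ (which must be propagated through the induction), so verifying the monotonicity $Q_{(l(t+1))}\le Q_{(l(t))}$ throughout the whole sliding window is the crux of the argument.
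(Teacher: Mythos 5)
Your proposal is correct and follows essentially the same route as the paper's proof: induction on $t$ with the invariant $(x^{(t)},y^{(t)})\in\mathcal{S}$ and $Q_{(l(t))}\leq F(x^{(0)})$, handling the two branches via \Cref{Lemma: Step 2-Y}\,\StatementNum{3} together with \Cref{Lemma: y falls into Y}, and \Cref{proposition: Step 2-X line-search}\,\StatementNum{1}--\StatementNum{2} with $c=Q_{(l(t))}$, then closing by dividing \eqref{eq: descent lemma} by $\eta_{(t+1)}$ and using the sliding-window inclusion. Only a cosmetic slip: $\dom(F)\subseteq\mathcal{X}$ is false in general, but the needed fact $x^{(0)}\in\mathcal{X}$ is immediate from $F(x^{(0)})\leq F(x^{(0)})$, so the base case stands.
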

\begin{proof}
We prove this theorem by induction. It is straightforward to verify that $(x^{(0)},y^{(0)})\in\mathcal{S}$ and $Q_{(l(0))} = Q(x^{(0)},y^{(0)}) = F(x^{(0)})$. We shall show that Item \StatementNum{1}, $(x^{(t+1)},y^{(t+1)})\in\mathcal{S}$, \eqref{eq: descent lemma} and \eqref{eq: descent lemma2} hold under the inductive hypothesis that $(x^{(t)},y^{(t)})\in\mathcal{S}$ and $Q_{(l(t))}\leq  F(x^{(0)})$ in the $t$-th iteration.

We first prove Item \StatementNum{1}. The scalar $\alpha_{(t)}\in [\underline{\alpha} , \overline{\alpha}]$ directly follows from Step 2-Y in MPGA when $i_{(t)} = 0$, and $\alpha_{(t)}\geq \min\{\underline{\alpha},\underline{\alpha}_{\mathcal{S},\sigma} \}\gamma$ is derived from  \Cref{proposition: Step 2-X line-search} \StatementNum{1} when Step 2-X is implemented with $i_{(t)} \in \mathbb{N}_N$.


We next prove $(x^{(t+1)},y^{(t+1)})\in\mathcal{S}$ and \eqref{eq: descent lemma}. Similarly, we consider cases $i_{(t)}=0$ and $i_{(t)}\in\mathbb{N}_N$ separately. For the case when $i_{(t)} = 0$, the Step 2-Y of MPGA yields $x^{(t+1)} = x^{(t)}$ and $y^{(t+1)} = \prox_{\alpha_{(t)} g^*}(y^{(t)}+\alpha_{(t)}x^{(t)})$. Then, invoking \Cref{Lemma: Step 2-Y} \StatementNum{3}, we deduce that $(x^{(t)},y^{(t+1)})\in\dom(Q)$ and \eqref{eq: descent lemma} holds. 	
Due to the fact $x^{(t+1)} = x^{(t)}$, the hypothesis $(x^{(t)},y^{(t)})\in\mathcal{S}$ results in $x^{(t+1)}\in\mathcal{X}$, and the assertion $(x^{(t)},y^{(t+1)})\in\dom(Q)$ leads to $\eta(x^{(t+1)},y^{(t+1)})>0$. 	
Together with $y^{(t+1)}\in\mathcal{Y}$, which follows from \Cref{Lemma: y falls into Y}, we conclude that  $(x^{(t+1)},y^{(t+1)}) \in\mathcal{S}$ by the definition of $\mathcal{S}$ (see \eqref{eq: definition of S}). For the case when $i\in\mathbb{N}_N$, we can verify that the relation 
\eqref{eq: descent lemma} holds with the help of \Cref{proposition: Step 2-X line-search} \StatementNum{1} and the fact $y^{(t+1)} = y^{(t)}$. Besides, the assertion $(x^{(t+1)},y^{(t+1)}) \in\mathcal{S}$ follows from \Cref{proposition: Step 2-X line-search} \StatementNum{2}.

We finally prove \eqref{eq: descent lemma2} upon the assertion $(x^{(t+1)},y^{(t+1)})\in\mathcal{S}$. It suffices to show that the second relation holds in \eqref{eq: descent lemma2}, wherein the first and third relations follow from the definition of $Q_{(l(t+1))}$ and the hypothesis $Q_{(l(t))}\leq  F(x^{(0)})$, respectively.	Let $Q_{(t+1)}:= Q(x^{(t+1)},y^{(t+1)})$. Dividing by $\eta_{(t+1)}$ on the both sides of \eqref{eq: descent lemma}, we deduce $Q_{(t+1)} = \zeta(x^{(t+1)}) / \eta_{(t+1)} \leq Q_{(l(t))}$. Then, we have $Q_{(l(t+1))} \leq \max\{Q_{(t+1)},Q_{(l(t))} \}
\leq Q_{(l(t))}$, and hence the second relation of  \eqref{eq: descent lemma2} holds. 
\end{proof}

At the rest of this section, we establish some basic convergence results of MPGA. To this end, we first show the continuity of the extended objective $Q$ on the set $\mathcal{S}$ in the following lemma.
\begin{lemma}\label{Lemma: eta is Lipschitz continuous}
Suppose that \Cref{Assumption: X is compact} holds. Let $\eta:\mathbb{R}^n\times\mathbb{R}^n \to [-\infty,+\infty)$ be defined at $(x,y)\in \mathbb{R}^n\times\mathbb{R}^n$ as $\eta(x,y) = \innerP{x}{y}-g^*(y)$. Then $\eta$ is Lipschitz continuous on $\mathcal{X}\times\mathcal{Y}$, and thus the objective $Q$ given by \eqref{definition: function Q} is continuous on $\mathcal{S}$.
\end{lemma}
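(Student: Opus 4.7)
The plan is to prove Lipschitz continuity of $\eta$ on $\mathcal{X}\times\mathcal{Y}$ by splitting $\eta(x,y)=\langle x,y\rangle-g^*(y)$ into the bilinear term and the conjugate term, then deduce continuity of $Q=\zeta/\eta$ on $\mathcal{S}$ from standard continuity of quotients, using that $\eta>0$ on $\mathcal{S}$.

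For the bilinear term, since $\mathcal{X}$ is compact by \Cref{Assumption: X is compact} and $\mathcal{Y}$ is compact by \Cref{Lemma: y falls into Y}, both sets are bounded by some constant, and the standard estimate
\begin{equation*}
|\langle x_1,y_1\rangle-\langle x_2,y_2\rangle|\leq \|y_1\|_2\|x_1-x_2\|_2+\|x_2\|_2\|y_1-y_2\|_2
\end{equation*}
gives Lipschitz continuity of $(x,y)\mapsto\langle x,y\rangle$ on $\mathcal{X}\times\mathcal{Y}$ with constant $\max\{\sup_{x\in\mathcal{X}}\|x\|_2,\sup_{y\in\mathcal{Y}}\|y\|_2\}$. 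The main obstacle will be the Lipschitz continuity of the convex conjugate $g^*$ on $\mathcal{Y}$, since $\mathcal{Y}$ might not lie in the interior of $\mathrm{dom}(g^*)$.

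The key observation to bypass this is that the definition $\mathcal{Y}=\bigcup_{z\in\mathcal{X}_{\underline{\alpha}}}\partial g(z)$ combined with the Fenchel–Young characterization $y\in\partial g(z)\Leftrightarrow z\in\partial g^*(y)$ (recalled at the end of Section 2.1) guarantees that for every $y\in\mathcal{Y}$ there exists $z_y\in\mathcal{X}_{\underline{\alpha}}\cap\partial g^*(y)$. Since $\mathcal{X}_{\underline{\alpha}}$ is bounded by some $B>0$, I then invoke the convex subgradient inequality: for any $y_1,y_2\in\mathcal{Y}$ with subgradients $z_{y_1},z_{y_2}\in\mathcal{X}_{\underline{\alpha}}$,
\begin{align*}
g^*(y_2)&\geq g^*(y_1)+\langle z_{y_1},y_2-y_1\rangle,\\
g^*(y_1)&\geq g^*(y_2)+\langle z_{y_2},y_1-y_2\rangle,
\end{align*}
which combine via Cauchy–Schwarz to give $|g^*(y_1)-g^*(y_2)|\leq B\|y_1-y_2\|_2$. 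Adding the two Lipschitz estimates yields that $\eta$ is Lipschitz on $\mathcal{X}\times\mathcal{Y}$.

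Finally, for continuity of $Q$ on $\mathcal{S}$, I note that $\zeta=f+h$ is continuous on $\mathcal{X}$ (since $\mathcal{X}\subseteq \Omega\cap\mathrm{dom}(f)$, $f$ is continuous on $\mathrm{dom}(f)$ by \Cref{assumption0}\StatementNum{1}, and $h$ is $C^1$ on $\Omega$ by \Cref{assumption0}\StatementNum{2}), hence continuous in $(x,y)$ on $\mathcal{X}\times\mathcal{Y}$. Together with the Lipschitz (hence continuous) $\eta$ and the fact that $\eta(x,y)>0$ for all $(x,y)\in\mathcal{S}$ by the definition \eqref{eq: definition of S}, the quotient $Q=\zeta/\eta$ is continuous on $\mathcal{S}$, completing the proof.
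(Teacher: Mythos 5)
Your proposal is correct and follows essentially the same route as the paper: the bilinear term is handled by the compactness of $\mathcal{X}\times\mathcal{Y}$, and the Lipschitz continuity of $g^*$ on $\mathcal{Y}$ is obtained exactly as in the paper, by using the construction of $\mathcal{Y}$ together with the Fenchel--Young equivalence to produce subgradients of $g^*$ lying in the bounded set $\mathcal{X}_{\underline{\alpha}}$ and then combining the two subgradient inequalities via Cauchy--Schwarz. Your explicit verification of the continuity of $Q=\zeta/\eta$ on $\mathcal{S}$ (continuity of $\zeta$ on $\mathcal{X}$ plus positivity of $\eta$) just spells out what the paper leaves implicit.
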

\begin{proof}
Due to the compactness of $\mathcal{X}\times\mathcal{Y}$, it suffices to show that $g^*$ is Lipschitz continuous on $\mathcal{Y}$. Let $y_1,y_2\in\mathcal{Y}$. According to the definition of $\mathcal{Y}$ (see \Cref{Lemma: y falls into Y}), there exists some $w_i\in\partial g^*(y_i)$ satisfying $\dist^2(w_i,\mathcal{X})\leq \underline{\alpha}^{-1}\overline{g}_{\mathcal{X}}$ for any $i=1,2$. By the convexity of $g^*$, there holds that $g^*(y_2)\geq g^*(y_1) + \innerP{w_1}{y_2-y_1}$ and $g^*(y_1)\geq g^*(y_2) + \innerP{w_2}{y_1-y_2}$.
Together with these two relations, we have
\begin{equation*}
\begin{split}
	|g^*(y_1)- g^*(y_2)|&\leq \max\{ \|w_1\|_2,\|w_2\|_2 \} \|y_1-y_2\|_2\\ &\leq \sup\left\{ \|x\|_2+\sqrt{\underline{\alpha}^{-1}\overline{g}_{\mathcal{X}}}:x\in\mathcal{X} \right\} \|y_1-y_2\|_2.
\end{split}
\end{equation*}
This completes the proof by the boundedness of $\mathcal{X}$.
\end{proof}

We finally present the convergence results of MPGA in the following theorem.
\begin{theorem}\label{Lemma: d to 0}
Suppose that \Cref{Assumption: X is compact} holds and the sequence $\{(x^{(t)},y^{(t)}):t\in\mathbb{N} \}$ be generated by MPGA with some $\mathcal{I}_{\infty} = (i_{(0)},i_{(1)},i_{(2)},...)$. Then $\{Q_{(l(t))}:t\in\mathbb{N} \}$ descends monotonically, and there holds
\begin{equation}\label{eq: Qlt converges to Q_I_infinity}
\lim_{t\to\infty} Q_{(l(t))} = Q_{\mathcal{I}_{\infty}}
\end{equation}
for some $Q_{\mathcal{I}_{\infty}}\in[0,F(x^{(0)})]$. Furthermore, there holds
\begin{equation}\label{eq: d to 0}
\lim\limits_{t\to\infty} \|x^{(t+1)}-x^{(t)}\|^2_2 + Q_{\mathcal{I}_{\infty}}\|y^{(t+1)}-y^{(t)}\|^2_2 = 0.
\end{equation}
\end{theorem}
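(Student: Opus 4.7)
The first two assertions of the theorem---that $\{Q_{(l(t))}\}$ is non-increasing and converges to some $Q_{\mathcal{I}_\infty}\in[0,F(x^{(0)})]$---are immediate from the chain \eqref{eq: descent lemma2} in \Cref{theorem: MPGA is well-defined} combined with $Q\geq 0$. To establish \eqref{eq: d to 0}, I first simplify the descent relation: dividing \eqref{eq: descent lemma} by $\eta_{(t+1)}\in(0,\overline{g}_{\mathcal{X}}]$, using the upper bound from \Cref{Remark: h is global Lips and sup of g} \StatementNum{1}, yields a clean one-step recursion $Q_{(t+1)} + \widetilde{C}\,D_t \leq Q_{(l(t))}$, where $\widetilde{C} := C_0/\overline{g}_{\mathcal{X}} > 0$ and $D_t := \|x^{(t+1)}-x^{(t)}\|_2^2 + Q_{(l(t))}\|y^{(t+1)}-y^{(t)}\|_2^2$. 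Writing $s_t := Q_{(l(t))}$ and $s_\infty := Q_{\mathcal{I}_\infty}$, I split the argument into two cases.

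Case $s_\infty = 0$. From the recursion and $Q\geq 0$ I get $\|x^{(t+1)}-x^{(t)}\|_2^2 \leq D_t \leq s_t/\widetilde{C}\to 0$. Since the $y$-contribution in \eqref{eq: d to 0} is multiplied by $Q_{\mathcal{I}_\infty}=0$, the conclusion follows immediately.

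Case $s_\infty > 0$. I use the classical nonmonotone-line-search trick. Substituting $t \mapsto l(t)-1$ in the recursion and exploiting $l(t)\to\infty$ together with the convergence of $\{s_t\}$, I obtain $\widetilde{C}\,D_{l(t)-1} \leq s_{l(t)-1}-s_t \to 0$, so $D_{l(t)-1}\to 0$. Because $s_{l(t)-1}\geq s_\infty/2$ for large $t$, this forces both $\|x^{(l(t))}-x^{(l(t)-1)}\|_2$ and $\|y^{(l(t))}-y^{(l(t)-1)}\|_2$ to shrink; the uniform continuity of $Q$ on the compact set $\mathcal{S}$ supplied by \Cref{Lemma: eta is Lipschitz continuous} then transfers this closeness to $Q$ values and delivers $Q_{(l(t)-1)}\to s_\infty$. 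I next induct on $j\in\{0,1,\dots,M+1\}$: given $Q_{(l(t)-j)}\to s_\infty$, applying the recursion at iteration $l(t)-j-1$ bounds $\widetilde{C}\,D_{l(t)-j-1}$ by $s_{l(t)-j-1}-Q_{(l(t)-j)}\to 0$, hence $D_{l(t)-j-1}\to 0$; the same ``increments shrink $+$ continuity transfer'' step then produces $Q_{(l(t)-j-1)}\to s_\infty$.

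To upgrade this subsequential statement into $D_t\to 0$ for every $t$, I run a pigeonhole argument. For any $t$, $l(t+M+1)\in\{t+1,\dots,t+M+1\}$, hence $j(t) := l(t+M+1)-t\in\{1,\dots,M+1\}$ and $D_t = D_{l(t+M+1)-j(t)}$. If $D_t$ failed to vanish, a subsequence $\{t_k\}$ with $D_{t_k}\geq\epsilon$ would, by pigeonhole on the finite range of $j(\cdot)$, admit a further subsequence on which $j(t_k)$ is constantly some $j^*\in\{1,\dots,M+1\}$, and the induction would then force $D_{t_k} = D_{l(t_k+M+1)-j^*}\to 0$, contradicting $D_{t_k}\geq\epsilon$. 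Finally, the inequality $s_t \geq s_\infty = Q_{\mathcal{I}_\infty}$ yields $\|x^{(t+1)}-x^{(t)}\|_2^2 + Q_{\mathcal{I}_\infty}\|y^{(t+1)}-y^{(t)}\|_2^2 \leq D_t \to 0$, which is \eqref{eq: d to 0}. The main obstacle I anticipate is the induction step in Case $s_\infty > 0$: carefully propagating the ``both $x$- and $y$-increments shrink'' property through the shifting indices $l(t)-j$ via the continuity of $Q$ on $\mathcal{S}$, and then combining the result with the pigeonhole construction to cover every index $t$ rather than only the subsequences $\{l(t)-j\}$.
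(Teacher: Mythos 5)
Your opening (monotonicity of $Q_{(l(t))}$, existence of $Q_{\mathcal{I}_\infty}\in[0,F(x^{(0)})]$, and the case $Q_{\mathcal{I}_\infty}=0$) matches the paper, and your overall skeleton for $Q_{\mathcal{I}_\infty}>0$ — a one-step recursion from \eqref{eq: descent lemma}, backward induction over the window indices $l(\cdot)-j$, then a pigeonhole step — is also the paper's. But your induction hinges on a step that is not justified: the ``continuity transfer'' in which shrinking increments $\|x^{(l(t))}-x^{(l(t)-1)}\|_2$, $\|y^{(l(t))}-y^{(l(t)-1)}\|_2$ are claimed to force $Q_{(l(t)-1)}\to Q_{\mathcal{I}_\infty}$ by ``uniform continuity of $Q$ on the compact set $\mathcal{S}$.'' The set $\mathcal{S}$ in \eqref{eq: definition of S} is bounded but \emph{not} closed (it is cut out by the open condition $\eta(x,y)>0$), and \Cref{Lemma: eta is Lipschitz continuous} only gives continuity of $Q$ on $\mathcal{S}$, not uniform continuity; indeed $Q=\zeta/\eta$ need not be uniformly continuous there. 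Concretely, at this stage of the analysis nothing prevents $\eta_{(l(t)-1)}\to 0$ along a subsequence (a uniform bound $\underline{\eta_c}>0$ is only established later, in \Cref{theorem: subsequential convergent of MPGAc}, and only for CMPGA); in that regime $\zeta(x^{(l(t))})=Q_{(l(t))}\eta_{(l(t))}\to 0$ as well, so $Q_{(l(t)-1)}$ is a $0/0$-type ratio and tiny increments of the iterates do not control it. Since your induction step needs $s_{l(t)-j-1}-Q_{(l(t)-j)}\to 0$, i.e.\ exactly the convergence of $Q$ at these intermediate (non-$l$) indices, the argument breaks precisely where it matters.

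The paper avoids this by arguing the induction step by contradiction and by \emph{not} dividing out $\eta$ prematurely: it keeps the inequality $C_0 d_{(t+1)}\leq (Q_{(l(t))}-Q_{(t+1)})\,\eta_{(t+1)}$. Under the contradiction hypothesis $d_{(l(t+M+1)-(J+1))}\to\epsilon>0$, this inequality (with the bounded factor $Q_{(l(t))}-Q_{(t+1)}\leq F(x^{(0)})$) forces $\eta$ at the relevant iterates to stay bounded away from zero, so any accumulation point lies in $\mathcal{S}$ and the (pointwise) continuity of $Q$ on $\mathcal{S}$ from \Cref{Lemma: eta is Lipschitz continuous} legitimately yields the convergence of the intermediate $Q$-values, producing the contradiction. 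Equivalently, your route can be repaired by a dichotomy: along any subsequence where $\eta$ at the intermediate iterates tends to $0$, the undivided inequality already gives $D\to 0$ there, while on subsequences where $\eta$ is bounded below your continuity transfer is valid. As written, however, the proposal has a genuine gap, and by passing to $Q_{(t+1)}+\widetilde{C}D_t\leq Q_{(l(t))}$ at the outset you discard the $\eta$ factor that is needed to handle the degenerate case.
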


\begin{proof}
The relation \eqref{eq: descent lemma2} reveals that $\{Q_{(l(t))}:t\in\mathbb{N} \}$ descends monotonically and $Q_{(l(t))}\geq Q(x^{(t+1)},y^{(t+1)})$ holds for any $t\in\mathbb{N}$. Then, the relation \eqref{eq: Qlt converges to Q_I_infinity} follows from the fact $Q\geq 0$.
We next dedicate to proving \eqref{eq: d to 0}.	It is straightforward to verify \eqref{eq: d to 0} by passing to the limit on the both sides of \eqref{eq: descent lemma} when $Q_{\mathcal{I}_{\infty}}=0$. Therefore, we suppose $Q_{\mathcal{I}_{\infty}}>0$ in the rest of the proof. Let $d_{(t+1)} := \|x^{(t+1)} - x^{(t)}\|^2_2 + Q_{(l(t))}\|y^{(t+1)}-y^{(t)}\|^2_2$ for $t\in\mathbb{N}$. Then, it is derived from \eqref{eq: descent lemma} that
\begin{equation}\label{eq: z1130 1557}
C_0 d_{(t+1)} \leq (Q_{(l(t))} - Q_{(t+1)}) \eta_{(t+1)}.
\end{equation}
Notice that $d_{(t+1)}\in \{d_{(l(t+M+1)-j)}:j\in\mathbb{N}_M  \}$ holds for any $t\in\mathbb{N}$. We shall show that $\lim_{t\to\infty} d_{(l(t+M+1)-j)} = 0$ holds for any $j\in\mathbb{N}_M$ by induction.

First, for the case when $j=0$, replacing $t$ by $l(t+M+1)-1$ in \eqref{eq: z1130 1557}, we obtain
\begin{equation}\label{eq: z0928 311}
C_0 d_{(l(t+M+1))} 
\leq \left(  Q_{(l(t))} -  Q_{(l(t+M+1))} \right) \overline{g}_{\mathcal{X}},
\end{equation}
where $\overline{g}_{\mathcal{X}}>0$ is given by \Cref{Remark: h is global Lips and sup of g} \StatementNum{1}.	Owing to \eqref{eq: Qlt converges to Q_I_infinity}, we deduce $\lim_{t\to\infty} d_{(l(t+M+1))} = 0$ by passing to the limit on \eqref{eq: z0928 311} with $t\to\infty$.
Second, under the inductive hypothesis that $\lim_{t\to\infty} d_{(l(t+M+1)-j)} = 0$ holds for any $j\in\mathbb{N}_J$ with some $J<M$, we shall show $\lim_{t\to\infty} d_{( l(t+M+1)-(J+1))} = 0$ by contradiction. If $\epsilon:= \mathop{\lim\sup}_{t\to\infty} d_{( l(t+M+1)-(J+1))} > 0$, there exists some subsequence $\mathcal{K}\subseteq \mathbb{N}$ such that
\begin{equation}\label{eq: z1130 1622}
\lim_{t\in\mathcal{K}\to\infty} d_{( l(t+M+1)-(J+1))} = \epsilon >0.
\end{equation}
Since \Cref{theorem: MPGA is well-defined} \StatementNum{2} shows that $\{(x^{(t)},y^{(t)}):t\in\mathbb{N}  \}$ falls into the compact set $\mathcal{X}\times\mathcal{Y}$, there exists some subsequence $\mathcal{K}_2\subseteq \mathcal{K}$ such that
\begin{equation}\label{eq: z0716 4}
\lim_{t\in\mathcal{K}_2\to\infty} \left( x^{( l(t+M+1)-(J+1))},y^{( l(t+M+1)-(J+1))} \right) = \left( \mathring{x}^{J+1},\mathring{y}^{J+1} \right)
\end{equation}
holds for some $\left( \mathring{x}^{J+1},\mathring{y}^{J+1} \right)\in \mathcal{X}\times\mathcal{Y}$. Replacing $t$ by $l(t+M+1)-(J+1)-1$ in \eqref{eq: z1130 1557} and passing to the limit with $t\in\mathcal{K}_2\to\infty$, we deduce $\eta\left( \mathring{x}^{J+1},\mathring{y}^{J+1} \right)>0$ upon the fact that $\eta$ is continuous  on $\mathcal{X}\times\mathcal{Y}$ (see \Cref{Lemma: eta is Lipschitz continuous}), and hence we have $\left( \mathring{x}^{J+1},\mathring{y}^{J+1} \right)\in\mathcal{S}$. Besides, in view of the induction hypothesis and $Q_{\mathcal{I}_{\infty}}>0$, we derive from \eqref{eq: z0716 4} that 
\begin{equation}\label{eq: z1130 1616}
\lim_{t\in\mathcal{K}_2\to\infty} \left( x^{( l(t+M+1))},y^{( l(t+M+1))} \right) = \left( \mathring{x}^{J+1},\mathring{y}^{J+1} \right).
\end{equation}
Due to the continuity of $Q$ on $\mathcal{S}$ (see \Cref{Lemma: eta is Lipschitz continuous}), we finally obtain from \eqref{eq: z0716 4} and \eqref{eq: z1130 1616} that
\begin{equation}\label{eq: 1130 1621}
\lim_{t\in\mathcal{K}_2\to\infty} Q_{( l(t+M+1)-(J+1))}
= Q\left( \mathring{x}^{J+1},\mathring{y}^{J+1} \right)
= \lim_{t\in\mathcal{K}_2\to\infty} Q_{( l(t+M+1))}
= Q_{\mathcal{I}_{\infty}}.
\end{equation}
Notice that \eqref{eq: z1130 1557} and \eqref{eq: 1130 1621} imply 
$$\lim_{t\in\mathcal{K}_2\to\infty} C_0 d_{( l(t+M+1)-(J+1))} \leq \lim_{t\in\mathcal{K}_2\to\infty} (Q_{(l(t+M+1))} - Q_{( l(t+M+1)-(J+1))}) \overline{g}_{\mathcal{X}} = 0,$$
which contradicts \eqref{eq: z1130 1622}. 
Consequently, $\lim_{t\to\infty} d_{(l(t+M+1)-j)} = 0$ holds for any $j\in\mathbb{N}_M$, and $\lim_{t\to\infty} d_{(t)} = 0$ follows immediately. This completes the proof.
\end{proof}


\section{Subsequential convergence analysis of CMPGA and RMPGA} \label{section: Subsequential convergence analysis}

In this section, we investigate the subsequential convergence of two specific algorithms within the framework of MPGA, namely, cyclic MPGA (CMPGA) and randomized MPGA (RMPGA), where the update block at each iteration are chosen in cyclic and randomized fashions respectively. The first subsection concerns the subsequential convergence of CMPGA, while the second subsection regards that of RMPGA.

\subsection{Subsequential convergece of CMPGA} \label{section: MPGAc}

This subsection is devoted to the subsequential convergece analysis of CMPGA. For this algorithm, the index $i$ of Step 1 at the $t$-th iteration is chosen by $i:=t \mod (N+1)$, i.e., the remainder of $t$ divided by $N+1$. Hence, every $N+1$ iterations consist of an epoch. For ease of presentation, we shall convert the corner-marks of the sequence $\{(x^{(t)},y^{(t)}):t\in\mathbb{N} \}$ generated by CMPGA into some $(k,i)\in\mathbb{N}\times\mathbb{N}_N^0$ via $k:=\lfloor t/(N+1) \rfloor$ and $i:=t\mod(N+1)$, i.e.,
\begin{equation}\label{eq: z0504 2037}
	\left(x^{(k,i)},y^{(k,i)}\right) = \left(x^{(k(N+1)+i)},y^{(k(N+1)+i)} \right).
\end{equation} 
With these new corner-marks, we now give a brief description on an epoch of CMPGA. Given $\left(x^{(k,0)},y^{(k,0)} \right)\in\mathbb{R}^n\times\mathbb{R}^n$, CMPGA performs the Step 2-Y at the first iteration of the $k$-th epoch to generate $y^{(k,1)}$ as
\begin{equation}\label{eq: Step 2-Y for MPGAc}
	y^{(k,1)} = \prox_{\alpha_{(k,0)} g^*}(y^{(k,0)}+\alpha_{(k,0)} x^{(k,0)})
\end{equation}
with some $\alpha_{(k,0)}\in[\underline{\alpha},\overline{\alpha}]$, while keeping $x^{(k,1)} = x^{(k,0)}$. Then, at the $(i+1)$-th iteration of an epoch, $i\in\mathbb{N}_N$, CMPGA keeps $y^{(k,i+1)} = y^{(k,i)}$ and conduct the Step 2-X to produce the new iteration $x^{(k,i+1)}$ by updating only the $i$-th block of $x^{(k,i)}$ as
\begin{equation}\label{eq: Step 2-X for MPGAc}
	x^{(k,i+1)}_i 
	\in \prox_{\alpha_{(k,i)} f_i}\left(x^{(k,i)}_i-\alpha_{(k,i)}\nabla_i h(x^{(k,i)})+\alpha_{(k,i)} Q(x^{(k,i)},y^{(k,i)}) y^{(k,i)}_i\right)
\end{equation}
with some $\alpha_{(k,i)}\in[\underline{\alpha}_{\star},\overline{\alpha}]$. Overall, in the $k$-th epoch of CMPGA, for any $i\in\mathbb{N}_N$, we have
\begin{equation}\label{eq: CMPGA}
	y^{(k,i)} = y^{(k+1,0)} \text{~~~~and~~~~}
	x_j^{(k,i)} = \begin{cases}
		x_j^{(k,0)}, & \text{if } j\geq i,\\
		x_j^{(k+1,0)}, & \text{if } j< i.
	\end{cases}
\end{equation}

The following lemma is the direct result of \Cref{theorem: MPGA is well-defined} \StatementNum{2} and \Cref{Lemma: d to 0}.
\begin{lemma}\label{lemma: MPGAc d to 0}
	Suppose that \Cref{Assumption: X is compact} holds.
	Let the sequence $\{(x^{(t)},y^{(t)}):t\in\mathbb{N} \}$ be generated by CMPGA, and $ \left\{ \left(x^{(k,i)},y^{(k,i)} \right): (k,i)\in\mathbb{N}\times\mathbb{N}_N^0\right\}$ be defined by \eqref{eq: z0504 2037}. Then, the following statements hold:
	\begin{enumerate}[label = {\upshape(\roman*)}]
		\item For any $(k,i)\in\mathbb{N}\times\mathbb{N}$, there holds $\left(x^{(k,i)},y^{(k,i)} \right) \in\mathcal{S} $ and
		\begin{multline}\label{eq: descent lemma for MPGAc}
			\zeta(x^{(k,i+1)})
			+C_0
			(\|x^{(k,i+1)} - x^{(k,i)}\|^2_2 + Q_{(l(k(N+1)+i))}\|y^{(k,i+1)}-y^{(k,i)}\|^2_2)\\
			\leq Q_{(l(k(N+1)+i))} \eta(x^{(k,i+1)},y^{(k,i+1)});
		\end{multline}
		\item There exists some $Q_{c_{\infty}}\in[0,F(x^{(0)})]$ such that $$\lim_{k\to\infty} Q_{(l(k(N+1)+i))} = Q_{c_{\infty}}$$ holds for any $i\in\mathbb{N}_N^0$;
		\item Furthermore, there holds
		\begin{equation}\label{eq: d to 0 for MPGAc}
			\lim\limits_{k\to\infty} \|x^{(k,i+1)}-x^{(k,i)}\|^2_2 + Q_{c_{\infty}} \|y^{(k,i+1)}-y^{(k,i)}\|^2_2 = 0
		\end{equation}
		for any $i\in\mathbb{N}$.
	\end{enumerate}
\end{lemma}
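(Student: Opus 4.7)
The plan is to derive this lemma as a direct translation of \Cref{theorem: MPGA is well-defined} (ii) and \Cref{Lemma: d to 0} into the two-index notation induced by the bijection $t \mapsto (k,i)$ with $k = \lfloor t/(N+1) \rfloor$ and $i = t \bmod (N+1)$ recorded in \eqref{eq: z0504 2037}. The conventions in \eqref{eq: CMPGA} make it consistent to identify the ``next iterate'' $(x^{(t+1)}, y^{(t+1)})$ with $(x^{(k,i+1)}, y^{(k,i+1)})$ for $i < N$ and with $(x^{(k+1,0)}, y^{(k+1,0)})$ for $i = N$. With that identification in hand, each of the three items is essentially bookkeeping.

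For item (i), I would apply \Cref{theorem: MPGA is well-defined} (ii) at the index $t = k(N+1)+i$ for each $i \in \mathbb{N}_N^0$. The inclusion $(x^{(k,i)}, y^{(k,i)}) \in \mathcal{S}$ is then immediate from the corresponding statement in that theorem, and the descent estimate \eqref{eq: descent lemma for MPGAc} is exactly \eqref{eq: descent lemma} rewritten in the $(k,i)$ notation (with $C_0$ and the value $Q_{(l(k(N+1)+i))}$ unchanged, since $l(\cdot)$ is defined on the original one-index sequence).

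For item (ii), I would invoke \Cref{Lemma: d to 0}, which provides a full-sequence limit $Q_{\mathcal{I}_\infty} \in [0, F(x^{(0)})]$ for $\{Q_{(l(t))}\}_{t\in\mathbb{N}}$. For each fixed $i \in \mathbb{N}_N^0$, the family $\{Q_{(l(k(N+1)+i))}\}_{k\in\mathbb{N}}$ is a subsequence of $\{Q_{(l(t))}\}_{t\in\mathbb{N}}$, hence converges to the same value. Setting $Q_{c_\infty} := Q_{\mathcal{I}_\infty}$ gives the desired common limit for all $i$.

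For item (iii), the relation \eqref{eq: d to 0} of \Cref{Lemma: d to 0} states that the expression $\|x^{(t+1)} - x^{(t)}\|_2^2 + Q_{\mathcal{I}_\infty}\|y^{(t+1)} - y^{(t)}\|_2^2$ tends to zero as $t \to \infty$. Restricting to the arithmetic subsequence $t = k(N+1)+i$ with $k \to \infty$ for fixed $i$ and rewriting through \eqref{eq: z0504 2037} (together with \eqref{eq: CMPGA} when $i = N$ forces a carry to $(k+1, 0)$) yields \eqref{eq: d to 0 for MPGAc} with coefficient $Q_{c_\infty} = Q_{\mathcal{I}_\infty}$. I do not anticipate any genuine obstacle beyond checking that the index boundary $i = N$ is handled consistently by \eqref{eq: CMPGA}, which is straightforward.
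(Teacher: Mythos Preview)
Your proposal is correct and matches the paper's approach exactly: the paper states the lemma without proof, noting only that it ``is the direct result of \Cref{theorem: MPGA is well-defined} \StatementNum{2} and \Cref{Lemma: d to 0},'' which is precisely the translation-by-reindexing you outline.
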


Now we are ready to establish the subsequential convergence for CMPGA.

\begin{theorem}\label{theorem: subsequential convergent of MPGAc}
	Suppose \Cref{Assumption: X is compact} holds. Let $(x^{\star},y^{\star})\in\mathbb{R}^n\times\mathbb{R}^n$ be an accumulation point of $\{(x^{(t)},y^{(t)}):t\in\mathbb{N} \}$ generated by CMPGA. Then the following statements hold:
	\begin{enumerate}[label = {\upshape(\roman*)}]
		\item $(x^{\star},y^{\star})\in\mathcal{S}$ and $\underline{\eta_c}:=\inf\{\eta(x^{(t)},y^{(t)}):t\in\mathbb{N} \}>0$;
		\item $x^{\star}\in\dom(F)$ is a critical point of $F$ with $F(x^{\star})y^{\star}\in F(x^{\star})\partial g(x^{\star})$.
	\end{enumerate}
\end{theorem}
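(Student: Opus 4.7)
The plan is to exploit the cyclic structure of CMPGA together with the small-step estimates from \Cref{lemma: MPGAc d to 0} to pass to the limit in the prox-optimality conditions of Step 2-Y and Step 2-X, and then to invoke \Cref{prop: stationary point of prox} to conclude. I start by writing $(x^{\star}, y^{\star}) = \lim_{m \to \infty}(x^{(t_m)}, y^{(t_m)})$ and decomposing $t_m = k_m(N+1) + i_m$; after passing to a subsequence, I may assume the intra-epoch index is a constant $i_m \equiv i \in \mathbb{N}_N^0$. Since \Cref{theorem: MPGA is well-defined} \StatementNum{2} gives $\{(x^{(t)}, y^{(t)})\} \subseteq \mathcal{S} \subseteq \mathcal{X} \times \mathcal{Y}$ and both factors are compact, $x^{\star} \in \mathcal{X}$ and $y^{\star} \in \mathcal{Y}$ are automatic. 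A telescoping argument based on $\|x^{(k, j+1)} - x^{(k, j)}\|_2 \to 0$ from \Cref{lemma: MPGAc d to 0} \StatementNum{3} yields $x^{(k_m, j)} \to x^{\star}$ for every $j \in \mathbb{N}_N^0$, and also $x^{(k_m - 1, j)} \to x^{\star}$ for every $j$.

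The crux of Part \StatementNum{1} is to show $\eta(x^{\star}, y^{\star}) > 0$, which I prove by contradiction. Using compactness of $\mathcal{Y}$ and $\alpha_{(\cdot, 0)} \in [\underline{\alpha}, \overline{\alpha}]$, I pass to a further subsequence along which $\alpha_{(k_m, 0)} \to \alpha^{\star}$ and the pre-Y-step dual of the relevant epoch converges to some $\hat{y}$---namely, the Y-step of epoch $k_m$ when $i \ge 1$, and the Y-step of epoch $k_m - 1$ when $i = 0$ (using $y^{(k_m, 0)} = y^{(k_m - 1, 1)}$). Applying \Cref{Lemma: Step 2-Y} \StatementNum{2} to that Y-step and taking limits via the continuity of $\eta$ on $\mathcal{X} \times \mathcal{Y}$ (\Cref{Lemma: eta is Lipschitz continuous}) gives
\begin{equation*}
\eta(x^{\star}, y^{\star}) - \eta(x^{\star}, \hat{y}) \ \ge\ \|y^{\star} - \hat{y}\|_2^2 / \alpha^{\star}.
\end{equation*}
Since $\eta > 0$ on $\mathcal{S}$ implies $\eta(x^{\star}, \hat{y}) \ge 0$ in the limit, the contradiction hypothesis $\eta(x^{\star}, y^{\star}) = 0$ forces $\hat{y} = y^{\star}$. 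Passing the prox identity of Step 2-Y to the limit by continuity of the convex prox then yields $y^{\star} = \prox_{\alpha^{\star} g^*}(y^{\star} + \alpha^{\star} x^{\star})$, equivalently $y^{\star} \in \partial g(x^{\star})$, and so $\eta(x^{\star}, y^{\star}) = g(x^{\star}) > 0$ by Fenchel equality and $x^{\star} \in \mathcal{X} \subseteq \Omega$, contradicting $\eta(x^{\star}, y^{\star}) = 0$. The uniform bound $\underline{\eta_c} > 0$ then follows by compactness: otherwise some subsequence of $\{\eta(x^{(t)}, y^{(t)})\}$ would tend to zero, producing via continuity of $\eta$ on $\mathcal{X} \times \mathcal{Y}$ an accumulation point with $\eta = 0$.

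For Part \StatementNum{2}, my plan is to verify the hypotheses of \Cref{prop: stationary point of prox} at $(x^{\star}, y^{\star})$. After extracting a further subsequence so that each $\alpha_{(k_m, j)} \to \alpha^{\star}_j > 0$, I pass to the limit in the Step 2-X prox inclusions using outer semicontinuity of the prox of the closed function $f_j$, continuity of $\nabla h$ on $\mathcal{X}$, and continuity of $Q$ on $\mathcal{S}$ (\Cref{Lemma: eta is Lipschitz continuous}), obtaining $x^{\star}_j \in \prox_{\alpha^{\star}_j f_j}(x^{\star}_j - \alpha^{\star}_j \nabla_j h(x^{\star}) + \alpha^{\star}_j F(x^{\star}) y^{\star}_j)$ for each $j \in \mathbb{N}_N$. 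If $Q_{c_{\infty}} > 0$, \Cref{lemma: MPGAc d to 0} \StatementNum{3} also gives $\|y^{(k, j+1)} - y^{(k, j)}\|_2 \to 0$, so $\hat{y} = y^{\star}$ and the Y-step identity limits to $y^{\star} = \prox_{\alpha^{\star}_0 g^*}(y^{\star} + \alpha^{\star}_0 x^{\star})$; \Cref{prop: stationary point of prox} then delivers the critical-point conclusion together with $y^{\star} \in \partial g(x^{\star})$. In the degenerate regime $Q_{c_{\infty}} = 0$, the uniform bound $\eta \le \bar{g}_{\mathcal{X}}$ together with $Q_{(l(t))} \to 0$ forces $\zeta(x^{(t)}) \to 0$, and continuity of $\zeta$ at $x^{\star}$ yields $\zeta(x^{\star}) = 0$, hence $F(x^{\star}) = 0$; the condition $F(x^{\star})y^{\star} \in F(x^{\star})\partial g(x^{\star})$ is then trivial, while the critical-point inclusion collapses to $0 \in \widehat{\partial} f(x^{\star}) + \nabla h(x^{\star})$, which holds by Fermat's rule because $x^{\star}$ globally minimizes the nonnegative function $f + h$. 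The main obstacle in the plan is the delicate contradiction argument of Part \StatementNum{1}, executed uniformly over the intra-epoch position $i$, and the separate treatment of the degenerate regime $Q_{c_{\infty}} = 0$ in Part \StatementNum{2}.
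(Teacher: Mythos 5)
Your proof is correct and follows the same overall skeleton as the paper's: extract a subsequence with a fixed intra-epoch index, prove $\eta(x^{\star},y^{\star})>0$ by contradiction using the Step 2-Y estimates, pass the proximal updates to the limit, and invoke \Cref{prop: stationary point of prox}, with the degenerate case handled trivially. The execution differs in a few places. In Part \StatementNum{1}, the paper combines \Cref{Lemma: Step 2-Y} \StatementNum{1}--\StatementNum{2} into the distance bound $\dist(x^{(k,0)},\partial g^*(y^{(k,1)}))\leq\sqrt{\eta(x^{(k,0)},y^{(k,1)})/\underline{\alpha}}$ and uses the closed graph of $\partial g^*$ with an $\epsilon$-argument to force $x^{\star}\in\partial g^*(y^{\star})$; you instead pass \Cref{Lemma: Step 2-Y} \StatementNum{2} to the limit to force the pre-step dual $\hat{y}$ to equal $y^{\star}$ under the contradiction hypothesis and then take the limit in the prox identity. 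Both routes land on the same contradiction ($g(x^{\star})=0$ versus $x^{\star}\in\mathcal{X}$), but yours invokes the joint continuity of $(z,\alpha)\mapsto\prox_{\alpha g^*}(z)$, a standard fact the paper never needs — you could avoid it by instead using the closedness of the graph of $\partial g^*$ (or the paper's variational-inequality limit as in its treatment of \eqref{eq: z1204 1708}); your explicit epoch shift for the case $\bar{i}=0$ is actually cleaner than the paper's glossed-over handling. In Part \StatementNum{2}, you split on $Q_{c_{\infty}}=0$ versus $Q_{c_{\infty}}>0$, whereas the paper splits on $\zeta(x^{\star})=0$ versus $\zeta(x^{\star})>0$ and shows the latter forces $Q_{c_{\infty}}>0$; the coverage is equivalent, and your explicit Fermat-rule argument in the degenerate case makes precise what the paper dismisses as immediate. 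One small wrinkle: the limiting Step 2-X inclusion should carry the coefficient $Q(x^{\star},y^{\star})$ (the quantity the algorithm actually uses); it equals $F(x^{\star})$ only once the limiting Step 2-Y condition gives $y^{\star}\in\partial g(x^{\star})$. Since you establish both conditions before invoking \Cref{prop: stationary point of prox}, this is an ordering blemish rather than a gap.
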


\begin{proof}
	Let $\left\{ \left(x^{(k,i)},y^{(k,i)}\right):(k,i)\in\mathbb{N}\times\mathbb{N}_N^0 \right\}$ be defined by \eqref{eq: z0504 2037}. In view of the \textit{Pigeonhole Principle}, there exists a subsequence  $\mathcal{K}\subseteq\mathbb{N}$ such that $\{(x^{(k,\bar{i})},y^{(k,\bar{i})}): k\in\mathcal{K} \}$ with a uniform $\bar{i}\in\mathbb{N}_N^0$ converges to $(x^{\star},y^{\star})$, i.e.,
	\begin{equation}\label{eq: z1024 0223}
		\lim_{k\in\mathcal{K}\to\infty}(x^{(k,\bar{i})},y^{(k,\bar{i})})= (x^{\star},y^{\star}).
	\end{equation}
	
	We first prove $(x^{\star},y^{\star})\in\mathcal{S}$ in Item \StatementNum{1}. According to the definition of $\mathcal{S}$ (see \eqref{eq: definition of S}), it suffices to prove $\eta(x^{\star},y^{\star})>0$, since $(x^{\star},y^{\star})\in\mathcal{X}\times\mathcal{Y}$ follows from the compactness of $\mathcal{X}$ and $\mathcal{Y}$ immediately. We shall prove $\eta(x^{\star},y^{\star})>0$ by contradiction. Suppose $\eta(x^{\star},y^{\star})\leq 0$. Notice that the relation \eqref{eq: CMPGA} indicates that $y^{(k,1)} = y^{(k,i+1)}$ holds for any $i\in\mathbb{N}_N$, and \Cref{lemma: MPGAc d to 0} \StatementNum{3} implies that $\lim_{k\to\infty} \|x^{(k,i)}-x^{(k,0)}\| = 0 $ holds for any $i\in\mathbb{N}_N^0$. Then, the equation \eqref{eq: z1024 0223} implies
	\begin{equation}\label{eq: z0317 1544}
		\lim_{k\in\mathcal{K}\to\infty} \left(x^{(k,0)},y^{(k,1)}  \right) = (x^{\star},y^{\star})
	\end{equation}
	Owing to \eqref{eq: Step 2-Y for MPGAc}, there holds for any $k\in\mathbb{N}$ that
	\begin{equation}\label{eq: z1209 1703}
		\dist(x^{(k,0)},\partial g^*(y^{(k,1)}))
		\leq \frac{1}{\alpha_{(k,0)}} \|y^{(k,0)} - y^{(k,1)} \|_2  
		\leq \sqrt{\frac{1}{\underline{\alpha}}\eta(x^{(k,0)},y^{(k,1)})},
	\end{equation}
	where the first relation follows from \Cref{Lemma: Step 2-Y} \StatementNum{1}, and the second relation follows from \Cref{Lemma: Step 2-Y} \StatementNum{2}, $\eta(x^{(k,0)},y^{(k,0)})>0$ and $\alpha_{(k,0)}\geq \underline{\alpha}$. From \eqref{eq: z1209 1703}, we deduce that, for any $\epsilon>0$, there exists some $z_{\epsilon,k} \in \partial g^*(y^{(k,1)})$ such that
	\begin{equation}\label{eq: z0317 1605}
		\|x^{(k,0)} - z_{\epsilon,k}\|_2 \leq \sqrt{\frac{1}{\underline{\alpha}}\eta(x^{(k,0)},y^{(k,1)})} + \epsilon,
	\end{equation}
	which implies the boundedness of $\{z_{\epsilon,k}:k\in\mathcal{K} \}$ owing to the boundedness of $\mathcal{X}$, which is the supset of $\{x^{(k,0)}:k\in\mathcal{K} \}$, and $\eta(x^{(k,0)},y^{(k,1)})\leq \overline{g}_{\mathcal{X}} $ given by \Cref{Remark: h is global Lips and sup of g} \StatementNum{1}. Inspired of this, we further assume the subsequence $\mathcal{K}_2\subseteq\mathcal{K}$ so that $\{z_{\epsilon,k}:k\in\mathcal{K}_2 \}$ converges to some $z_{\epsilon,\infty}$. Invoking the continuity of $\eta$ on $\mathcal{X}\times\mathcal{Y}$ by \Cref{Lemma: eta is Lipschitz continuous}, we derive $z_{\epsilon,\infty}\in\partial g^*(y^{\star})$ from the definition of limiting subdifferentials due to $\{y^{(k,1)}:k\in\mathcal{K}_2\}\to y^{\star}$ by \eqref{eq: z0317 1544} and $z_{\epsilon,k}\in \partial g^*(y^{(k,1)})$ for any $k\in\mathcal{K}_2$. Passing to the limit on the both sides of \eqref{eq: z0317 1605} with $k\in\mathcal{K}_2 \to\infty$, we obtain $\|x^{\star} - z_{\epsilon,\infty}\|_2 \leq \epsilon$ from the continuity of $\eta$ on $\mathcal{X}\times\mathcal{Y}$ by \Cref{Lemma: eta is Lipschitz continuous}, \eqref{eq: z0317 1544} and $\eta(x^{\star},y^{\star})\leq 0$. Due to the arbitrariness of $\epsilon$, we deduce $\dist(x^{\star},\partial g^*(y^{\star})) = 0$, or equally, $x^{\star}\in\partial g^*(y^{\star})$ owing to the closedness of $\partial g^*(y^{\star})$. Consequently, the assumption $\eta(x^{\star},y^{\star})\leq 0$ results in $x^{\star}\in\partial g^*(y^{\star})$ and leads to $g(x^{\star}) = 0$. However, $g(x^{\star}) \neq  0$ follows from the fact $x^{\star}\in\mathcal{X}$. Therefore, we deduce $\eta(x^{\star},y^{\star})>0$ by the contradiction, and $(x^{\star},y^{\star})\in\mathcal{S}$ follows immediately. Besides, $\underline{\eta_c}>0$ is derived since the statement $\{(x^{(t)},y^{(t)}):t\in\mathbb{N} \}\in\mathcal{S}$ in \Cref{theorem: MPGA is well-defined} \StatementNum{2} indicates that $ \eta(x^{(t)},y^{(t)})>0$ holds for each $t\in\mathbb{N}$, and $ \eta(x^{\star},y^{\star})>0$ holds for each $(x^{\star},y^{\star})$ being an accumulation point of $\{(x^{(t)},y^{(t)}):t\in\mathbb{N} \}$. This completes the proof of Item \StatementNum{1}.
	
	We next prove Item \StatementNum{2}. The statement that $x^{\star}\in\dom(F)$ in Item \StatementNum{2} is the direct result of $(x^{\star},y^{\star})\in\mathcal{S}$ given by Item \StatementNum{1}, and $g(x^{\star})\neq 0$ follows from $x^{\star}\in\dom(F)$ and the definition of $F$ (see \eqref{definition: function F}). When $\zeta(x^{\star}) = 0$, Item \StatementNum{2} comes immediately since $x^{\star}$ is a global minimizer of $F$ with $F(x^{\star}) = \zeta(x^{\star}) / g(x^{\star}) = 0$. We next focus on the case when $\zeta(x^{\star}) >0$. Replacing $i$ with $\bar{i}-1$ in \eqref{eq: descent lemma for MPGAc}, we deduce that
	\begin{equation}\label{eq: z0327 1644}
		\zeta(x^{(k,\bar{i})})
		\leq  Q_{(l(k(N+1)+\bar{i}-1))} \eta(x^{(k,\bar{i})},y^{(k,\bar{i})})
		\leq  Q_{(l(k(N+1)+\bar{i}-1))} \overline{g}_{\mathcal{X}},
	\end{equation}
	where $\overline{g}_{\mathcal{X}}\geq 0$ is given by \Cref{Remark: h is global Lips and sup of g} \StatementNum{1}. Passing to the limit on the both sides of \eqref{eq: z0327 1644} with $k\in\mathcal{K}\to\infty$, we obtain $\zeta(x^{\star})\leq Q_{c_{\infty}} \overline{g}_{\mathcal{X}}$ due to \eqref{eq: z1024 0223}, the continuity of $\zeta$ on $\mathcal{X}$ and \Cref{lemma: MPGAc d to 0} \StatementNum{2}. Thus, $\zeta(x^{\star})>0$ forces $Q_{c_{\infty}}>0$. 
	Owing to $Q_{c_{\infty}}>0$, \eqref{eq: d to 0 for MPGAc}  and \eqref{eq: z1024 0223}, there holds for any $i\in\mathbb{N}_N^0$ that
	\begin{equation}\label{eq: z0327 1643}
		\lim_{k\in\mathcal{K}\to\infty} \left(x^{(k,i)},y^{(k,i)}  \right) = (x^{\star},y^{\star}).
	\end{equation}
	Then, according to CMPGA, we derive from \eqref{eq: Step 2-Y for MPGAc} that
	\begin{multline}\label{eq: z1204 1704}
		\alpha_{(k,0)} g^*(y^{(k,1)}) + \frac{1}{2}\|y^{(k,1)}-y^{(k,0)}-\alpha_{(k,0)} x^{(k,0)}\|^2_2\\
		\leq \alpha_{(k,0)} g^*(y) + \frac{1}{2}\|y-y^{(k,0)}-\alpha_{(k,0)} x^{(k,0)}\|^2_2
	\end{multline}
	holds for any $y\in\mathbb{R}^n$, and \eqref{eq: z1204 1704} yields that
	\begin{multline}\label{eq: z1204 1708}
		g^*(y^{(k,1)}) + \frac{1}{2\overline{\alpha}}\|y^{(k,1)}-y^{(k,0)}\|^2_2 + \innerP{y^{(k,1)}-y^{(k,0)}}{x^{(k,0)}}\\
		\leq 
		g^*(y) + \frac{1}{2\underline{\alpha}}\|y-y^{(k,0)}\|^2_2 + \innerP{y-y^{(k,0)}}{x^{(k,0)}}.
	\end{multline}
	By passing to the limit on the both sides of \eqref{eq: z1204 1708} with $k\in\mathcal{K}\to\infty$, we obtain that $g^*(y^{\star})\leq g^*(y) +  \frac{1}{2\underline{\alpha}}\|y-y^{\star}\|^2_2 + \innerP{y-y^{\star}}{x^{\star}}$ holds for any $y\in\mathbb{R}^n$, which indicates
	\begin{equation}\label{eq: z1204 1714}
		y^{\star}=\prox_{\underline{\alpha} g^*} (y^{\star}+\underline{\alpha} x^{\star}).
	\end{equation}
	Similarly, there holds for any $i\in\mathbb{N}_N$ that
	\begin{equation}\label{eq: z1204 1715}
		x^{\star}_i\in\prox_{\underline{\alpha}_{\star} f_i}(x^{\star}_i-\underline{\alpha}_{\star}\nabla_i h(x^{\star})+\underline{\alpha}_{\star} Q(x^{\star},y^{\star})y^{\star}_i),
	\end{equation}
	owing to \eqref{eq: Step 2-X for MPGAc}, \eqref{eq: z1024 0223}, $\alpha_{(k,i)}\in[\underline{\alpha}_{\star},\overline{\alpha}]$, $(x^{\star},y^{\star})\in\mathcal{S}$ and the continuity of $Q$ on $\mathcal{S}$ (see \Cref{Lemma: eta is Lipschitz continuous}). 
	Together with \eqref{eq: z1204 1714} and \eqref{eq: z1204 1715}, we derive Item \StatementNum{2} with the help of \Cref{prop: stationary point of prox}.
\end{proof}

\subsection{Subsequential convergence of RMPGA}\label{section: MPGAs}

This subsection is devoted to the subsequential convergence of RMPGA. At the $t$-th iteration, RMPGA picks an index $i$ from $\mathbb{N}_N^0$ with probability $p^{(t)}_i\geq p_{\min}>0$, where $\{p^{(t)}_i:i\in\mathbb{N}_N^0 \}$ forms a probability distribution. For convenience, we use $(x^{(t),i},y^{(t),i})$ to denote the point generated by the $t$-th iteration of RMPGA if a specific index $i\in \mathbb{N}_N^0$ is chosen. Given $(x^{(t)},y^{(t)})\in\mathbb{R}^n\times\mathbb{R}^n$, if the index $i=0$ is picked at the $t$-th iteration, then RMPGA keeps $x^{(t+1)} = x^{(t)}$ and performs Step 2-Y to set $y^{(t+1)} = y^{(t),+}$, where 
\begin{equation}\label{eq: Step 2-Y for MPGAs}
	y^{(t),+} = \prox_{\alpha_{(t),0} g^*}(y^{(t)}+\alpha_{(t),0} x^{(t)})
\end{equation}
for some $\alpha_{(t),0}\in[\underline{\alpha},\overline{\alpha}]$. When an index $i\in\mathbb{N}_N$ is chosen at the $t$-th iteration, RMPGA keeps $y^{(t+1)} = y^{(t)} $ while performing Step 2-X to generate the new iterator $x^{(t+1)}$ as
\begin{equation*}
	x^{(t+1)}_j = \begin{cases}
		x^{(t)}_j, & j\neq i,\\
		x^{(t),+}_j, & j=i,
	\end{cases}
\end{equation*}
where
\begin{equation}\label{eq: Step 2-X for MPGAs}
	x^{(t),+}_j \in \prox_{\alpha_{(t),j} f_j}\left(x^{(t)}_j-\alpha_{(t),j}\nabla_j h(x^{(t)})+\alpha_{(t),j} Q(x^{(t)},y^{(t)}) y^{(t)}_j\right)
\end{equation}
with some $\alpha_{(t),j}\in[\underline{\alpha}_{\star},\overline{\alpha}]$. We establish the subsequential convergence of RMPGA in the following theorem.

\begin{theorem}
	Suppose \Cref{Assumption: X is compact} holds. Let $(x^{\star},y^{\star})$ be an accumulation point of $\{(x^{(t)},y^{(t)}):t\in\mathbb{N} \}$ generated by RMPGA. Then $x^{\star}\in\dom(F)$ is a critical point of $F$ with $F(x^{\star})y^{\star}\in F(x^{\star})\partial g(x^{\star})$ almost surely.
\end{theorem}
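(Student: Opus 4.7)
The plan is to mirror the CMPGA proof (\Cref{theorem: subsequential convergent of MPGAc}), replacing the deterministic cyclic sweep by a probabilistic argument that, along the random accumulation-point structure, each of the $N+1$ block choices is realized infinitely often near every accumulation point. The core new ingredient is a Borel-Cantelli-type lemma that lets us, almost surely, produce for every accumulation point $(x^\star,y^\star)$ and every $i\in\mathbb{N}_N^0$ a subsequence of iterations at which block $i$ was selected and along which the iterates converge to $(x^\star,y^\star)$.

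First I would establish this key lemma via L\'evy's conditional Borel-Cantelli. Let $\mathcal{F}_t$ be the filtration generated by $(i_{(0)},\ldots,i_{(t-1)})$, so that $(x^{(t)},y^{(t)})$ is $\mathcal{F}_t$-measurable while $\Prob{i_{(t)}=i\mid\mathcal{F}_t}=p^{(t)}_i\ge p_{\min}$. For a fixed deterministic $u\in\mathbb{R}^n\times\mathbb{R}^n$, integer $k\geq1$, and index $i\in\mathbb{N}_N^0$, the events $A_t:=\{i_{(t)}=i\}\cap\{(x^{(t)},y^{(t)})\in\mathcal{B}(u,1/k)\}$ satisfy $\Prob{A_t\mid\mathcal{F}_t}\ge p_{\min}\,\mathbf{1}\{(x^{(t)},y^{(t)})\in\mathcal{B}(u,1/k)\}$, so $\sum_t\Prob{A_t\mid\mathcal{F}_t}=\infty$ on the event that the iterates enter $\mathcal{B}(u,1/k)$ infinitely often. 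Conditional Borel-Cantelli then forces $A_t$ to occur infinitely often almost surely on that event. Intersecting these probability-one events over a countable dense subset $\mathcal{D}\subseteq\mathcal{X}\times\mathcal{Y}$, over all $k$, and over all $i$ produces a probability-one event $\mathcal{E}$ on which a standard diagonal-approximation yields, for every accumulation point $(x^\star,y^\star)$ and every $i\in\mathbb{N}_N^0$, a subsequence $\mathcal{K}_i$ with $i_{(t)}=i$ on $\mathcal{K}_i$ and $(x^{(t)},y^{(t)})\to (x^\star,y^\star)$.

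I would then work sample-by-sample on $\mathcal{E}$ and replay the CMPGA argument block-by-block. Along $\mathcal{K}_0$ we have $x^{(t+1)}=x^{(t)}$ and $y^{(t+1)}=\prox_{\alpha_{(t),0}g^*}(y^{(t)}+\alpha_{(t),0}x^{(t)})$; \Cref{Lemma: Step 2-Y} \StatementNum{1}--\StatementNum{2} combined with $\alpha_{(t),0}\ge\underline{\alpha}$ gives $\dist(x^{(t)},\partial g^*(y^{(t+1)}))\le\sqrt{\eta(x^{(t)},y^{(t+1)})/\underline{\alpha}}$, and the CMPGA contradiction argument (using continuity of $\eta$ on $\mathcal{X}\times\mathcal{Y}$ from \Cref{Lemma: eta is Lipschitz continuous} and boundedness of $\partial g^*$ on bounded sets) rules out $\eta(x^\star,y^\star)\le 0$; hence $(x^\star,y^\star)\in\mathcal{S}$. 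If $\zeta(x^\star)=0$, then $F(x^\star)=0$ and $x^\star$ is a global minimizer, so the conclusion is immediate. Otherwise $\zeta(x^\star)>0$, which via \eqref{eq: descent lemma} forces $Q_{\mathcal{I}_\infty}>0$, and consequently $\|y^{(t+1)}-y^{(t)}\|_2\to 0$ in addition to $\|x^{(t+1)}-x^{(t)}\|_2\to 0$ by \Cref{Lemma: d to 0}. I would then pass to the limit in \eqref{eq: Step 2-Y for MPGAs} along $\mathcal{K}_0$ and in \eqref{eq: Step 2-X for MPGAs} along each $\mathcal{K}_i$, $i\in\mathbb{N}_N$, after extracting further subsequences on which the step-sizes $\alpha_{(t),i}$ converge in the compact interval $[\underline{\alpha}_\star,\overline{\alpha}]$. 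Continuity of $\nabla h$ on $\mathcal{X}_\mu$, continuity of $Q$ on $\mathcal{S}$ (\Cref{Lemma: eta is Lipschitz continuous}), and continuity of each $f_i$ on $\dom(f_i)$ (\Cref{assumption0}) ensure the limits satisfy the proximity inclusions \eqref{eq: z1127 2200}--\eqref{eq: z1127 2201} of \Cref{prop: stationary point of prox}, which then delivers the desired critical-point conclusion.

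The main obstacle is the key lemma: reconciling the randomness of the block selection with the sample-path-dependent accumulation structure. Naively one might try to define a stopping time $\tau_i(t)$ equal to the next iteration after $t$ at which block $i$ is chosen, but its random gap $\tau_i(t)-t$ need not be bounded in a way that preserves convergence of $(x^{(\tau_i(t))},y^{(\tau_i(t))})$. The deterministic-dense-set plus conditional Borel-Cantelli trick sidesteps this by decoupling the probabilistic argument (applied only to deterministic reference points $u\in\mathcal{D}$) from the later geometric argument (approximating each accumulation point by such $u$). Once the key lemma is in place, the rest is an essentially mechanical adaptation of \Cref{theorem: subsequential convergent of MPGAc}.
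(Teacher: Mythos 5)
Your probabilistic core is genuinely different from the paper's. The paper never localizes block selections near the accumulation point: it introduces the virtual full-update iterates $\widehat{x}^{(t)},\widehat{y}^{(t)}$ (the candidate prox steps for every block at iteration $t$), uses the bound $\Prob{\|x^{(t+1)}-x^{(t)}\|_2^2+\|y^{(t+1)}-y^{(t)}\|_2^2>\xi/(N+1)}\ \ge\ p_{\min}\,\Prob{\|\widehat{x}^{(t)}-x^{(t)}\|_2^2+\|\widehat{y}^{(t)}-y^{(t)}\|_2^2>\xi}$ together with \Cref{Lemma: d to 0} to conclude that $(\widehat{x}^{(t)},\widehat{y}^{(t)})$ converges to $(x^{\star},y^{\star})$ in probability along the subsequence, extracts an almost surely convergent further subsequence, and then passes to the limit in all block updates simultaneously before invoking \Cref{prop: stationary point of prox}. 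Your conditional Borel--Cantelli lemma over a countable dense net of balls instead produces, on one full-measure event, block-specific subsequences $\mathcal{K}_i$ converging to every accumulation point; this yields the cleaner statement ``almost surely, every accumulation point is critical'' and avoids arguing convergence in probability along a sample-dependent subsequence, at the price of extra measure-theoretic bookkeeping (the filtration must also carry the tie-breaking of the set-valued prox and the line search so that $(x^{(t)},y^{(t)})$ is $\mathcal{F}_t$-measurable and $p^{(t)}_i$ is adapted). Both routes end with the same limit passage into \eqref{eq: z1127 2200}--\eqref{eq: z1127 2201}.

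One step of your plan would fail as written: the claim that the CMPGA contradiction argument, run along $\mathcal{K}_0$, rules out $\eta(x^{\star},y^{\star})\le 0$ before anything is known about $Q_{\mathcal{I}_{\infty}}$. In \Cref{theorem: subsequential convergent of MPGAc} the vector fed into $\partial g^*$, namely $y^{(k,1)}$, is carried unchanged through the whole epoch, so the accumulation structure automatically forces the post-prox $y$ to converge to $y^{\star}$. In RMPGA along $\mathcal{K}_0$ you only know $(x^{(t)},y^{(t)})\to(x^{\star},y^{\star})$; the post-prox iterate $y^{(t+1)}$ may accumulate at some other $\tilde{y}\in\mathcal{Y}$, and you cannot force $\|y^{(t+1)}-y^{(t)}\|_2\to 0$ at that stage because \eqref{eq: d to 0} controls the $y$-increments only through the factor $Q_{\mathcal{I}_{\infty}}$. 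Consequently the contradiction argument at best yields $\eta(x^{\star},\tilde{y})>0$ for that limit $\tilde{y}$ (its only contradiction source being $g(x^{\star})=0$, already excluded by $x^{\star}\in\mathcal{X}$), not $\eta(x^{\star},y^{\star})>0$. The repair is the paper's ordering, which you partly invoke anyway: dispose of $\zeta(x^{\star})=0$ trivially; for $\zeta(x^{\star})>0$ pass to the limit in \eqref{eq: descent lemma}, using $\eta_{(t+1)}\le g(x^{(t+1)})\le\overline{g}_{\mathcal{X}}$, to get $Q_{\mathcal{I}_{\infty}}>0$, hence $\|y^{(t+1)}-y^{(t)}\|_2\to 0$, and then $\eta(x^{\star},y^{\star})>0$, i.e.\ $(x^{\star},y^{\star})\in\mathcal{S}$; only afterwards run your limit passages along $\mathcal{K}_0$ and each $\mathcal{K}_i$. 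With this reordering your argument goes through.
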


\begin{proof}
	Invoking \Cref{Lemma: y falls into Y}, we deduce $(x^{\star},y^{\star}) \in \mathcal{X}\times \mathcal{Y}$ by the compactness of $\mathcal{X}\times \mathcal{Y}$. When $\zeta(x^{\star}) = 0$, this theorem follows immediately since $x^{\star} $ is a global minimizer of $F$ with $F(x^{\star}) = 0$. We next focus on the case when $\zeta(x^{\star}) > 0$. Let $\mathcal{K}$ be a subsequence such that $\{(x^{(t)},y^{(t)}):t\in\mathcal{K} \}$ converges to $(x^{\star},y^{\star})$. Then  $\lim_{t\in\mathcal{K}\to\infty} x^{(t+1)} = x^{\star}$ follows from \eqref{eq: d to 0}, and $\zeta(x^{\star})\leq Q_{\mathcal{I}_{\infty}} \eta(x^{\star},y^{\star})$ is derived by passing to the limit with $t\in\mathcal{K}\to\infty$ on the both sides of \eqref{eq: descent lemma} and noting that $\eta$ is continuous on $\mathcal{X}\times \mathcal{Y}$ by \Cref{Lemma: eta is Lipschitz continuous}. Hence, $\zeta(x^{\star})>0$ results in $\eta(x^{\star},y^{\star})>0$ and $Q_{\mathcal{I}_{\infty}}>0$. On the one hand, $\eta(x^{\star},y^{\star})>0$ and $(x^{\star},y^{\star}) \in \mathcal{X}\times \mathcal{Y}$ indicate $(x^{\star},y^{\star}) \in \mathcal{S}$. On the other hand, $Q_{\mathcal{I}_{\infty}}>0$ and \eqref{eq: d to 0} yield that both of $\{\|x^{(t+1)}-x^{(t)}\|:t\in\mathcal{K} \}$ and $\{\|y^{(t+1)}-y^{(t)}\|:t\in\mathcal{K} \}$ converge to zero.  Let $\widehat{x}^{(t)}\in\mathbb{R}^n$ be defined by $\widehat{x}^{(t)}_j = x^{(t),+}_j$ for $j\in\mathbb{N}_N$ and $\widehat{y}^{(t)} = y^{(t),+}$.
	Notice that for any $\xi>0$ and $t\in\mathbb{N}$, there holds in the sense of probability that
	\begin{align*}
		&\Prob{\|x^{(t+1)}-x^{(t)}\|_2^2+\|y^{(t+1)}-y^{(t)}\|_2^2 > \frac{\xi}{(N+1)} } \\
		&= \sum_{i=0}^{N} p_i^{(t)} \Prob{\|x^{(t),i}_i-x^{(t)}_i\|_2^2+\|y^{(t),i}-y^{(t)}\|_2^2 > \frac{\xi}{(N+1)}}\\
		&\geq p_{\min} \Prob{\|\widehat{x}^{(t)} - x^{(t)}\|_2^2+\|\widehat{y}^{(t)} - y^{(t)}\|_2^2 > \xi }.
	\end{align*}
	Therefore, under the prior hypothesis $\zeta(x^{\star})>0$, we deduce that $\{(\widehat{x}^{(t)},\widehat{y}^{(t)}):t\in\mathcal{K} \}$ converges to $(x^{\star},y^{\star})$ in probability. Invoking \cite[Theorem 6.3.1 (b)]{Resnick:A_Probability_Path}, we claim some subsequence $\mathcal{K}_2\subseteq \mathcal{K}$ such that $\{(\widehat{x}^{(t)},\widehat{y}^{(t)}):t\in\mathcal{K}_2 \}$ converges to $(x^{\star},y^{\star})$ almost surely under $\zeta(x^{\star})>0$. By passing to the limit with $t\in\mathcal{K}_2\to\infty$ after replacing $y^{(t),+}$ by $\widehat{y}^{(t)}$ in \eqref{eq: Step 2-Y for MPGAs} and replacing $x^{(t),+}_j$ by $\widehat{x}^{(t)}_j$ in \eqref{eq: Step 2-X for MPGAs}, we deduce that the following two relations hold almost surely under $\zeta(x^{\star})>0$:
	\begin{align*}
		&y^{\star}\in\prox_{\underline{\alpha} g^*} (y^{\star}+\underline{\alpha} x^{\star}),\\
		&x^{\star}_j\in\prox_{\underline{\alpha}_{\star} f_j}(x^{\star}_j-\underline{\alpha}_{\star}\nabla_j h(x^{\star})+\underline{\alpha}_{\star} Q(x^{\star},y^{\star})y^{\star}_j), ~~~~~j\in\mathbb{N}_N,
	\end{align*}
	owing to $\alpha_{(t),0}\in[\underline{\alpha},\overline{\alpha}]$, $\alpha_{(t),j}\in[\underline{\alpha}_{\star},\overline{\alpha}]$, $(x^{\star},y^{\star})\in\mathcal{S}$ and the continuity of $Q$ on $\mathcal{S}$ by \Cref{Lemma: eta is Lipschitz continuous}. 
	Consequently, this theorem is obtained with the help of \Cref{prop: stationary point of prox}.
\end{proof}

\section{Sequential convergence and convergence rate of monotone CMPGA} \label{section: Global convergence}

In this section, we investigate the sequential convergence and convergence rate of the entire solution sequence $\{ x^{(t)}:t\in\mathbb{N} \}$ generated by CMPGA with a monotone line-search scheme (CMPGA\_ML), i.e., CMPGA with the parameter $M=0$, under suitable assumptions.

\subsection{Sequential convergence of CMPGA\_ML}
In this subsection, we consider the sequential convergence of CMPGA\_ML. By assuming the KL property of $Q$ defined in \eqref{definition: function Q}, our convergence analysis will show that $\{(x^{(k,0)},y^{(k,0)}):k\in\mathbb{N}\}$ generated by CMPGA\_ML and $Q$ satisfy all the requirements in \Cref{proposition: KL convergence framework} under suitable conditions, and thus yields the sequential convergence of $\{ x^{(t)}:t\in\mathbb{N} \}$ to a critical point of $F$. Specifically, the boundedness of $\{(x^{(k,0)},y^{(k,0)}):k\in\mathbb{N}\}$ generated by CMPGA\_ML is a direct consequence of \Cref{lemma: MPGAc d to 0} \StatementNum{1} and the boundedness of $\mathcal{S}$ defined in \eqref{eq: definition of S}, while Items \StatementNum{1} and \StatementNum{3} of \Cref{proposition: KL convergence framework} are verified for $\{(x^{(k,0)},y^{(k,0)}):k\in\mathbb{N}\}$ and $Q$ in the next proposition. To simplify the notation, we denote $Q(x^{(k,i)},y^{(k,i)})$ by $Q_{(k,i)}$ for $k\in\mathbb{N}$ and $i\in\mathbb{N}_N^0$.

\begin{proposition}\label{proposition: CMPGA sufficient descent}
	Suppose that \Cref{Assumption: X is compact} holds. Let the sequence $\{(x^{(t)},y^{(t)}):t\in\mathbb{N} \}$ be generated by CMPGA\_ML, and $\left\{ \left(x^{(k,i)},y^{(k,i)}\right):(k,i)\in\mathbb{N}\times\mathbb{N}_N^0 \right\}$ be defined by \eqref{eq: z0504 2037}. Then the following two statements hold:
	\begin{enumerate}[label = {\upshape(\roman*)}]
		\item There exist $C_1>0$ and $K_1>0$ such that 
		\begin{equation*}\label{eq: sufficient decrease for MPGAc_ML}
			Q_{(k+1,0)} + C_1 \left( \|x^{(k+1,0)}-x^{(k,0)}\|^2_2 + Q_{(k+1,0)}\|y^{(k+1,0)}-y^{(k,0)}\|^2_2 \right) 
			\leq Q_{(k,0)}
		\end{equation*}
		holds for any $k\geq K_1$;	
		\item $\xi := \lim_{k\to\infty} Q_{(k,0)} $ exists and $Q$ takes the value $\xi$ at any accumulation point of $\{(x^{(k,0)},y^{(k,0)}):k\in\mathbb{N} \}$.
	\end{enumerate}
\end{proposition}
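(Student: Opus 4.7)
My plan is to derive (i) by telescoping the per-iteration descent inequality from \eqref{eq: descent lemma} across one full epoch of CMPGA\_ML, and to obtain (ii) as a direct consequence of the resulting monotonicity together with the continuity of $Q$ on $\mathcal{S}$.

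First, I would specialize \eqref{eq: descent lemma} to CMPGA\_ML, where $M=0$ forces $l(t)=t$. Applied to the transition $(k,i)\to(k,i+1)$ and divided by $\eta(x^{(k,i+1)},y^{(k,i+1)})$, which is uniformly bounded above by $\overline{g}_{\mathcal{X}}$ thanks to \Cref{Remark: h is global Lips and sup of g} (i), this yields
$$Q_{(k,i+1)} + \frac{C_0}{\overline{g}_{\mathcal{X}}}\bigl(\|x^{(k,i+1)}-x^{(k,i)}\|_2^2 + Q_{(k,i)}\|y^{(k,i+1)}-y^{(k,i)}\|_2^2\bigr) \leq Q_{(k,i)},$$
so in particular $\{Q_{(k,i)}\}$ is non-increasing in both $k$ and $i$.

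Next, I would sum the displayed inequality over $i=0,1,\dots,N$ and exploit the structure recorded in \eqref{eq: CMPGA}: the $i=0$ substep keeps $x^{(k,1)}=x^{(k,0)}$ (its $x$-increment vanishes), each $i\in\mathbb{N}_N$ substep keeps $y$ fixed (the corresponding $y$-increments vanish), and the block updates for $i=1,\dots,N$ modify disjoint coordinate blocks of $x$. Hence
$$\sum_{i=1}^{N}\|x^{(k,i+1)}-x^{(k,i)}\|_2^2 = \|x^{(k+1,0)}-x^{(k,0)}\|_2^2, \qquad \|y^{(k,1)}-y^{(k,0)}\|_2 = \|y^{(k+1,0)}-y^{(k,0)}\|_2,$$
and the telescoping in $Q_{(k,i)}$ collapses the summed inequality to
$$Q_{(k+1,0)} + \frac{C_0}{\overline{g}_{\mathcal{X}}}\bigl(\|x^{(k+1,0)}-x^{(k,0)}\|_2^2 + Q_{(k,0)}\|y^{(k+1,0)}-y^{(k,0)}\|_2^2\bigr) \leq Q_{(k,0)}.$$
Since monotonicity gives $Q_{(k,0)}\geq Q_{(k+1,0)}$, I can replace the coefficient $Q_{(k,0)}$ of the $y$-term on the left by the smaller $Q_{(k+1,0)}$, obtaining (i) with $C_1 := C_0/\overline{g}_{\mathcal{X}}$ (and in fact any $K_1\geq 0$ suffices).

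For (ii), the sequence $\{Q_{(k,0)}\}$ is non-increasing by (i) and bounded below by $0$, so $\xi:=\lim_{k\to\infty} Q_{(k,0)}$ exists in $[0,F(x^{(0)})]$. Any accumulation point $(x^{\star},y^{\star})$ of $\{(x^{(k,0)},y^{(k,0)})\}$ is also an accumulation point of the full CMPGA iterate sequence, so \Cref{theorem: subsequential convergent of MPGAc} (i) places it in $\mathcal{S}$; the continuity of $Q$ on $\mathcal{S}$ from \Cref{Lemma: eta is Lipschitz continuous} then yields $Q(x^{\star},y^{\star})=\xi$. The main obstacle is purely bookkeeping: correctly tracking within one epoch which coordinate block is modified at which substep so that the telescoped descent cleanly assembles the two-term quadratic bound against $\|x^{(k+1,0)}-x^{(k,0)}\|_2$ and $\|y^{(k+1,0)}-y^{(k,0)}\|_2$; once the cyclic structure and the block-separable form of $f$ are exploited, nothing substantive remains.
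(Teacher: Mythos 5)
Your proposal is correct and follows essentially the same route as the paper: divide the per-iteration descent inequality \eqref{eq: descent lemma for MPGAc} by $\eta\le\overline{g}_{\mathcal{X}}$, use the monotonicity coming from $M=0$, sum over one epoch using the block structure in \eqref{eq: CMPGA}, and obtain (i) with $C_1=C_0/\overline{g}_{\mathcal{X}}$; item (ii) then follows, as in the paper, from monotone boundedness, Theorem \ref{theorem: subsequential convergent of MPGAc} (i), and the continuity of $Q$ on $\mathcal{S}$. Your write-up merely makes the epoch bookkeeping (disjoint block increments and the single $y$-substep) more explicit than the paper does.
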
	

\begin{proof}
	We first prove Item \StatementNum{1}. The monotone line search scheme adopted by CMPGA\_ML implies that
	\begin{equation}\label{eq: z0317 2149}
		Q_{(k+1,0)} = Q_{(l((k+1)(N+1)+0))} \leq Q_{(l(k(N+1)+i))} = Q_{(k,i)}
	\end{equation}
	holds for any $i\leq N+1$. By dividing by $\eta(x^{(k,i+1)},y^{(k,i+1)})$ on the both sides of \eqref{eq: descent lemma for MPGAc} and combining \eqref{eq: z0317 2149}, we obtain for any $(k,i)\in\mathbb{N}\times\mathbb{N}_N^0$ that
	\begin{equation}\label{eq: z1209 1957}
		Q_{(k,i+1)} + \frac{C_0}{\overline{g}_{\mathcal{X}}} (\|x^{(k,i+1)} - x^{(k,i)}\|^2_2 + Q_{(k,i)}\|y^{(k,i+1)}-y^{(k,i)}\|^2_2)  \leq Q_{(k,i)},
	\end{equation} 
	where $\overline{g}_{\mathcal{X}}\geq \eta(x^{(k,i+1)},y^{(k,i+1)})$ is given by \Cref{Remark: h is global Lips and sup of g} \StatementNum{1}. Summing up \eqref{eq: z1209 1957} over $i\in\mathbb{N}_N^0$, we derive Item \StatementNum{1} with $C_1 = C_0/\overline{g}_{\mathcal{X}}$ in view of \eqref{eq: CMPGA} and \eqref{eq: z0317 2149}.
	
	
	Then, since Item \StatementNum{1} shows that the nonnegative scalar sequence $\{Q_{(k,0)}:k\in\mathbb{N} \}$ decreases monotonously, we conclude Item \StatementNum{2} by the continuity of $Q$ on $\mathcal{S}$ (see \Cref{Lemma: eta is Lipschitz continuous}) and the fact that each accumulation point of $\{(x^{(k,0)},y^{(k,0)}):k\in\mathbb{N} \}$ belongs to $\mathcal{S}$ (see \Cref{theorem: subsequential convergent of MPGAc} \StatementNum{1}). 
\end{proof}

Now it remains to prove that $\{(x^{(k,0)},y^{(k,0)}):k\in\mathbb{N} \}$ generated by CMPGA\_ML satisfies Item \StatementNum{2} of \Cref{proposition: KL convergence framework} with $H:=Q$. To this end, we introduce the following two assumptions and a technical lemma.

\begin{assumption}\label{Assumption: f is L continuous}
	The function $f$ is locally Lipschitz continuous on $\mathcal{X}$.
\end{assumption}

\begin{assumption}\label{Assumption: g* calm}
	The function $g^*$ satisfies calmness condition on $\dom(g^*)$.
\end{assumption}

\begin{lemma}\label{lemma: Q is Lips on S when f is Lips on X}
	Suppose that Assumptions \ref{Assumption: X is compact} and \ref{Assumption: f is L continuous} hold. Let $\{(x^{(t)},y^{(t)}):t\in\mathbb{N} \}$ be generated by CMPGA\_ML. Then there exists $L_Q>0$ such that
	\begin{equation*}
		|Q_{(t_1)}-Q_{(t_2)}|\leq L_Q\Big (\|x^{(t_1)}-x^{(t_2)}\|_2 + \min\{Q_{(t_1)},Q_{(t_2)} \}\|y^{(t_1)}-y^{(t_2)}\|_2  \Big )
	\end{equation*} 
	holds for any $t_1,t_2\in\mathbb{N}$. 
\end{lemma}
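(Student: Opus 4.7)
The plan is to exploit the quotient structure $Q=\zeta/\eta$ and a uniform positive lower bound on the denominator along the sequence, then reduce everything to Lipschitz-type estimates on $\zeta$ and on $\eta$. The starting algebraic identity is
\[
Q_{(t_1)}-Q_{(t_2)}=\frac{\zeta(x^{(t_1)})-\zeta(x^{(t_2)})}{\eta(x^{(t_1)},y^{(t_1)})}+\frac{Q_{(t_2)}\bigl(\eta(x^{(t_1)},y^{(t_1)})-\eta(x^{(t_2)},y^{(t_2)})\bigr)}{\eta(x^{(t_1)},y^{(t_1)})},
\]
obtained by adding and subtracting $\zeta(x^{(t_2)})\eta(x^{(t_1)},y^{(t_1)})$ in the cross-multiplied numerator. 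Combining this identity with its twin (swap $t_1\leftrightarrow t_2$) and keeping whichever estimate carries the smaller $Q$-coefficient produces $\min\{Q_{(t_1)},Q_{(t_2)}\}$ as the multiplier of the $\eta$-difference, which is exactly the factor demanded by the statement.

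Next I would secure the denominator control $\underline{\eta_c}:=\inf_{t}\eta(x^{(t)},y^{(t)})>0$. The iterates lie in $\mathcal{S}\subseteq\mathcal{X}\times\mathcal{Y}$ by \Cref{theorem: MPGA is well-defined} \StatementNum{2} and \Cref{Lemma: y falls into Y}, hence the sequence is bounded and admits accumulation points, so \Cref{theorem: subsequential convergent of MPGAc} \StatementNum{1} applies and delivers $\underline{\eta_c}>0$. Both denominators in the identity above are therefore $\ge\underline{\eta_c}$.

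It remains to Lipschitz-bound the two numerator differences. For $\zeta=f+h$, \Cref{Assumption: f is L continuous} plus a finite-subcover contradiction argument on the compact set $\mathcal{X}$ upgrades the local Lipschitz continuity of $f$ to a global modulus on $\mathcal{X}$; the same upgrade applies to $h$ since \Cref{Remark: h is global Lips and sup of g} \StatementNum{2} makes $\nabla h$ uniformly bounded on $\mathcal{X}_{\mu}\supseteq\mathcal{X}$. Hence $\zeta$ admits a Lipschitz modulus $L_\zeta$ on $\mathcal{X}$. For $\eta$, the decomposition
\[
\eta(x_1,y_1)-\eta(x_2,y_2)=\langle x_1-x_2,y_1\rangle+\langle x_2,y_1-y_2\rangle-\bigl(g^*(y_1)-g^*(y_2)\bigr),
\]
together with the Lipschitz continuity of $g^*$ on $\mathcal{Y}$ established inside the proof of \Cref{Lemma: eta is Lipschitz continuous} and the boundedness of $\mathcal{X},\mathcal{Y}$, yields constants $C_x,C_y>0$ with $|\eta(x_1,y_1)-\eta(x_2,y_2)|\le C_x\|x_1-x_2\|_2+C_y\|y_1-y_2\|_2$ for $(x_1,y_1),(x_2,y_2)\in\mathcal{X}\times\mathcal{Y}$.

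Assembling these pieces in the identity of the first paragraph gives
\[
|Q_{(t_1)}-Q_{(t_2)}|\le\frac{L_\zeta+C_x\min\{Q_{(t_1)},Q_{(t_2)}\}}{\underline{\eta_c}}\|x^{(t_1)}-x^{(t_2)}\|_2+\frac{C_y\min\{Q_{(t_1)},Q_{(t_2)}\}}{\underline{\eta_c}}\|y^{(t_1)}-y^{(t_2)}\|_2.
\]
Because $Q_{(t)}\le F(x^{(0)})$ by \Cref{theorem: MPGA is well-defined} \StatementNum{2}, the $x$-coefficient is dominated by the fixed constant $(L_\zeta+C_xF(x^{(0)}))/\underline{\eta_c}$, whereas the $y$-coefficient must keep the $\min\{Q_{(t_1)},Q_{(t_2)}\}$ factor intact. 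Choosing $L_Q$ to be the maximum of these two constants gives the desired inequality. The only delicate point is precisely this asymmetric bookkeeping: the $x$-slot may absorb $\min\{Q_{(t_1)},Q_{(t_2)}\}$ into a constant, but retaining it on the $y$-slot is essential so that the estimate matches the $C_1(\|x^{(k+1,0)}-x^{(k,0)}\|_2^2+Q_{(k+1,0)}\|y^{(k+1,0)}-y^{(k,0)}\|_2^2)$ form of the sufficient decrease provided by \Cref{proposition: CMPGA sufficient descent} in the subsequent KL convergence analysis.
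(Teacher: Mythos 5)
Your proposal is correct and follows essentially the same route as the paper's proof: the same add-and-subtract quotient decomposition (the paper's WLOG $Q_{(t_1)}\geq Q_{(t_2)}$ plays the role of your "twin" symmetrization), the lower bound $\underline{\eta_c}>0$ from \Cref{theorem: subsequential convergent of MPGAc} \StatementNum{1}, global Lipschitz continuity of $\zeta$ on $\mathcal{X}$ and of $\eta$ on $\mathcal{X}\times\mathcal{Y}$ via \Cref{Lemma: eta is Lipschitz continuous}, and the bound $Q_{(t)}\leq F(x^{(0)})$ to absorb the $\min$ factor into the $x$-coefficient. No gaps.
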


\begin{proof}
	Assume $Q_{(t_1)} \geq Q_{(t_2)}$ without loss of generality. Then, we have  
	\begin{align}\notag 
		|Q_{(t_1)}-Q_{(t_2)}| &= \left| \frac{\zeta(x^{(t_1)})}{\eta_{(t_1)}} -\frac{\zeta(x^{(t_2)})}{\eta_{(t_1)}} + \frac{\zeta(x^{(t_2)})}{\eta_{(t_1)}} - \frac{\zeta(x^{(t_2)})}{\eta_{(t_2)}}	\right| \\
		&\leq \frac{|\zeta(x^{(t_1)})-\zeta(x^{(t_2)})|}{\underline{\eta_c}}  +
		\frac{|\eta_{(t_1)}-\eta_{(t_2)}|}{\underline{\eta_c}}Q_{(t_2)},\label{eq: z1209 1730}
	\end{align}
	where $\eta_{(t_i)} = \eta(x^{(t_i)},y^{(t_i)})$, $i=1,2$, and $\underline{\eta_c}>0$ is given by \Cref{theorem: subsequential convergent of MPGAc} \StatementNum{1}. Notice that $\zeta$ is globally Lipschitz continuous on $\mathcal{X}$ under Assumptions \ref{Assumption: X is compact} and \ref{Assumption: f is L continuous}, and $\eta$ is Lipschitz continuous on $\mathcal{S}$ by \Cref{Lemma: eta is Lipschitz continuous}. This lemma is derived from \eqref{eq: z1209 1730} and the fact that $Q_{(t_2)}\leq F(x^{(0)})$.
\end{proof}

With the help of Assumptions \ref{Assumption: X is compact}-\ref{Assumption: g* calm} and \Cref{lemma: Q is Lips on S when f is Lips on X}, we have the following proposition regarding the relative error condition of \Cref{proposition: KL convergence framework}.

\begin{proposition}\label{proposition: CMPGA satisfies relative error}
	Suppose that Assumptions \ref{Assumption: X is compact}-\ref{Assumption: g* calm} hold. Let the sequence $\{(x^{(t)},y^{(t)}):t\in\mathbb{N} \}$ be generated by CMPGA\_ML, and $(x^{(k,i)},y^{(k,i)})$ be defined by \eqref{eq: z0504 2037}. Then, there exist $\widetilde{C}_2>0$, $K_2>0$ and $w^{k+1}\in
	\widehat{\partial}	 Q(x^{(k+1,0)},y^{(k+1,0)})$ such that
	\begin{equation*}
		\|w^{k+1}\|_2 \leq \widetilde{C}_2 \left( \|x^{(k+1,0)}-x^{(k,0)}\|_2+Q_{(k+1,0)}\|y^{(k+1,0)}-y^{(k,0)}\|_2  \right)
	\end{equation*}
	holds for any $k\geq K_2$.
\end{proposition}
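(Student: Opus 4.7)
The plan is to construct an explicit element $w^{k+1}$ of $\widehat{\partial}Q(x^{(k+1,0)},y^{(k+1,0)})$ from the optimality conditions of the proximal subproblems solved during the $k$-th epoch, and then to estimate its norm against the per-epoch displacements in $x$ and $y$. The trivial case $Q_{(k+1,0)}=0$ (which forces $\zeta(x^{(k+1,0)})=0$ and makes $(x^{(k+1,0)},y^{(k+1,0)})$ a global minimizer of $Q$) is handled by taking $w^{k+1}=0$. For the non-trivial case, Theorem~\ref{theorem: subsequential convergent of MPGAc}~\StatementNum{1} gives $\eta_{(k+1,0)}\geq\underline{\eta_c}>0$, and Proposition~\ref{ppsition:2.2}~\StatementNum{2} applied with $\varphi_1=\zeta$, $\varphi_2=g$---legitimate under Assumption~\ref{Assumption: g* calm} and the continuity of $\zeta$ on $\dom(f)$---produces the element
\begin{equation*}
	w^{k+1}:=\frac{1}{\eta_{(k+1,0)}}\Bigl(v_1-Q_{(k+1,0)}y^{(k+1,0)},\ Q_{(k+1,0)}(v_2-x^{(k+1,0)})\Bigr)\in\widehat{\partial}Q(x^{(k+1,0)},y^{(k+1,0)})
\end{equation*}
for any $v_1\in\widehat{\partial}\zeta(x^{(k+1,0)})$ and $v_2\in\partial g^*(y^{(k+1,0)})$.

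The selection of $v_1$ and $v_2$ comes directly from the CMPGA\_ML updates within the epoch. Step~2-Y at $(k,0)$ together with Lemma~\ref{Lemma: Step 2-Y}~\StatementNum{1} supplies $v_2:=\alpha_{(k,0)}^{-1}(y^{(k,0)}-y^{(k+1,0)})+x^{(k,0)}\in\partial g^*(y^{(k+1,0)})$. For each $i\in\mathbb{N}_N$, Fermat's rule applied to the Step~2-X subproblem, combined with the epoch identities in~\eqref{eq: CMPGA} (namely $x^{(k,i)}_i=x^{(k,0)}_i$, $x^{(k,i+1)}_i=x^{(k+1,0)}_i$, and $y^{(k,i)}=y^{(k+1,0)}$), yields
\begin{equation*}
	v_{1,i}:=\alpha_{(k,i)}^{-1}\bigl(x^{(k,0)}_i-x^{(k+1,0)}_i\bigr)-\nabla_i h(x^{(k,i)})+Q_{(k,i)}y^{(k+1,0)}_i\in\widehat{\partial}f_i\bigl(x^{(k+1,0)}_i\bigr).
\end{equation*}
The block-separability of $\widehat{\partial}f$, together with $\widehat{\partial}\zeta=\widehat{\partial}f+\nabla h$ (since $h$ is continuously differentiable on $\Omega$), then gives $v_1:=(v_{1,i})_{i=1}^N+\nabla h(x^{(k+1,0)})\in\widehat{\partial}\zeta(x^{(k+1,0)})$.

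It remains to bound $\|w^{k+1}\|_2$. For the $y$-block, $\|v_2-x^{(k+1,0)}\|_2\le\underline{\alpha}^{-1}\|y^{(k+1,0)}-y^{(k,0)}\|_2+\|x^{(k+1,0)}-x^{(k,0)}\|_2$. For the $x$-block, the $i$-th component of $v_1-Q_{(k+1,0)}y^{(k+1,0)}$ simplifies to
\begin{equation*}
	\alpha_{(k,i)}^{-1}\bigl(x^{(k,0)}_i-x^{(k+1,0)}_i\bigr)+\bigl(\nabla_i h(x^{(k+1,0)})-\nabla_i h(x^{(k,i)})\bigr)+\bigl(Q_{(k,i)}-Q_{(k+1,0)}\bigr)y^{(k+1,0)}_i,
\end{equation*}
which I would bound summand-by-summand: the first by $\underline{\alpha}_{\star}^{-1}\|x^{(k+1,0)}_i-x^{(k,0)}_i\|_2$; the second via the Lipschitz modulus of $\nabla h$ on $\mathcal{X}\subseteq\mathcal{X}_{\mu}$ from Remark~\ref{Remark: h is global Lips and sup of g}~\StatementNum{2}, using the crucial observation $\|x^{(k+1,0)}-x^{(k,i)}\|_2\le\|x^{(k+1,0)}-x^{(k,0)}\|_2$ (these vectors coincide in the blocks $j<i$); and the third via Lemma~\ref{lemma: Q is Lips on S when f is Lips on X}---whose $y$-term vanishes thanks to $y^{(k,i)}=y^{(k+1,0)}$---together with the uniform bound on $\mathcal{Y}$ from Lemma~\ref{Lemma: y falls into Y}. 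Dividing by $\eta_{(k+1,0)}\ge\underline{\eta_c}$ and absorbing the residual pure-$x$ term coming from the $y$-block through the uniform bound $Q_{(k+1,0)}\le F(x^{(0)})$ then produces the desired $\widetilde{C}_2$. The main obstacle is precisely this bookkeeping: ensuring that the $Q$-difference term is absorbed into a multiple of $\|x^{(k+1,0)}-x^{(k,0)}\|_2$ via Lemma~\ref{lemma: Q is Lips on S when f is Lips on X} rather than producing an uncontrolled $\|y\|$ contribution outside the $Q_{(k+1,0)}$-weighted term, so that the two increments combine with the correct weights on the right-hand side.
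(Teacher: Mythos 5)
Your proposal is correct and follows essentially the same route as the paper's proof: it invokes \Cref{ppsition:2.2}~\StatementNum{2} to write an explicit element of $\widehat{\partial}Q(x^{(k+1,0)},y^{(k+1,0)})$, builds its components from the optimality conditions of Step~2-Y and Step~2-X within one epoch, and bounds the pieces using $\underline{\eta_c}>0$, $Q_{(k+1,0)}\leq F(x^{(0)})$, the Lipschitz modulus of $\nabla h$, \Cref{lemma: Q is Lips on S when f is Lips on X}, the epoch identities \eqref{eq: CMPGA}, and the boundedness of $\mathcal{Y}$, exactly as the paper does with its block-diagonal matrices $\Lambda^{(k)},\Xi^{(k)}$. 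Your separate treatment of the degenerate case $Q_{(k+1,0)}=0$ (where the paper's appeal to \Cref{ppsition:2.2}~\StatementNum{2} would not literally apply since it needs $\zeta>0$) is a small extra touch of care, not a change of method.
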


\begin{proof}	
	By \Cref{ppsition:2.2} \StatementNum{2}, it suffices to show that some $(w^{k+1}_x,w^{k+1}_y)$ satisfies
	\begin{align}
		&w^{k+1}_x \in  \frac{\widehat{\partial} \zeta(x^{(k+1,0)}) - Q_{(k+1,0)} y^{(k+1,0)}}{\eta_{(k+1,0)}},\label{eq: z0328 1625} \\
		&w^{k+1}_y \in \frac{ Q_{(k+1,0)}(\partial g^*(y^{(k+1,0)})- x^{(k+1,0)})}{\eta_{(k+1,0)}},\label{eq: z0328 1626}
	\end{align}
	and both of $\|w^{k+1}_x\|_2$ and $\|w^{k+1}_y\|_2$ can be bounded by $ \|x^{(k+1,0)}-x^{(k,0)}\|_2+Q_{(k+1,0)}\|y^{(k+1,0)}-y^{(k,0)}\|_2$.
	
	Firstly, it follows from \eqref{eq: Step 2-Y for MPGAc} and \Cref{Lemma: Step 2-Y} \StatementNum{1} that
	$\frac{1}{\alpha_{(k,0)}}(y^{(k,0)}-y^{(k,1)})+x^{(k,0)}\in \partial g^*(y^{(k,1)}) $
	holds for some $\alpha_{(k,0)}\in[\underline{\alpha},\overline{\alpha}]$. By setting
	\begin{equation*}
		w^{k+1}_y = \frac{\frac{Q_{(k+1,0)}}{\alpha_{(k,0)}}(y^{(k,0)}-y^{(k,1)}) + Q_{(k+1,0)}(x^{(k,0)}-x^{(k+1,0)})}{\eta_{(k+1,0)}},
	\end{equation*}
	we derive \eqref{eq: z0328 1626} and $\|w^{k+1}_y\|_2 \leq \frac{Q_{(k+1,0)}}{\underline{\alpha}\underline{\eta_c}} \|y^{(k,0)}-y^{(k+1,0)}\|_2 + \frac{Q_{(k+1,0)}}{\underline{\eta_c}} \|x^{(k,0)}-x^{(k+1,0)}\|_2$ due to $y^{(k,1)} = y^{(k+1,0)}$. 
	This together with the fact that $Q_{(k+1,0)}\leq F(x^{(0)})$ implies some $C_{2,y} >0$ such that
	\begin{equation}\label{eq: z0328 1800}
		\|w^{k+1}_y\|_2 \leq C_{2,y} \left( \|x^{(k,0)}-x^{(k+1,0)}\|_2 + Q_{(k+1,0)}\|y^{(k,0)}-y^{(k+1,0)}\|_2 \right).
	\end{equation}
	Secondly, it is derived from \eqref{eq: Step 2-X for MPGAc} that
	\begin{equation}\label{eq: z1210 2141}
		\frac{1}{\alpha_{(k,i)}}\left(x^{(k,i)}_i-x^{(k,i+1)}_i\right) -  \nabla_i h(x^{(k,i)}) + Q_{(k,i)}y^{(k,i)}_i \in  \widehat{\partial} f_i(x^{(k,i+1)}_i)
	\end{equation}
	holds for some $\alpha_{(k,i)}\in[\underline{\alpha}_{\star},\overline{\alpha}]$. Let  $\Lambda^{(k)},\Xi^{(k)}\in\mathbb{R}^{n\times n}$ be block diagonal matrices whose $i$-th diagonal matrices are $\frac{1}{\alpha_{(k,i)}}I_{n_i}$ and $Q_{(k,i)}I_{n_i}$, respectively, with $I_{n_i}\in\mathbb{R}^{n_i\times n_i}$ being the identity matrix, $i\in\mathbb{N}_N$, and $\wideparen{h}^{(k)}\in\mathbb{R}^n$ be defined by
	$$\wideparen{h}^{(k)} := \left[ \nabla_1 h(x^{(k,1)}),\nabla_2 h(x^{(k,2)}),...,\nabla_N h(x^{(k,N)})\right].$$ 
	Then, by combining \eqref{eq: z1210 2141} over $i\in\mathbb{N}_N$, there holds for CMPGA\_ML that
	\begin{equation}\label{eq: 0625 13}
		\Lambda^{(k)}(x^{(k,0)}-x^{(k+1,0)}) -  \wideparen{h}^{(k)} + \Xi^{(k)} y^{(k+1,0)}\in \widehat{\partial}  f(x^{(k+1,0)}).
	\end{equation}
	Therefore, we derive \eqref{eq: z0328 1625} from \eqref{eq: 0625 13} with
	\begin{multline}\label{eq: z03101603}
		w^{k+1}_x = \Big ( \Lambda^{(k)}(x^{(k,0)}-x^{(k+1,0)}) + \nabla h(x^{(k+1,0)}) - \wideparen{h}^{(k)}\\
		+ \Xi^{(k)} y^{(k+1,0)} - Q_{(k+1,0)} y^{(k+1,0)} 
		\Big ) \eta_{(k+1,0)}^{-1}.
	\end{multline}
	For the term $\nabla h(x^{(k+1,0)}) - \wideparen{h}^{(k)}$ on the right side of \eqref{eq: z03101603}, we have
	\begin{align}
		\|\nabla h(x^{(k+1,0)}) - \wideparen{h}^{(k)}\|_2
		=  \sqrt{\sum_{i=1}^{N} \|\nabla_i h(x^{(k+1,0)}) - \nabla_i h(x^{(k,i)})\|_2^2 } \notag \\
		\leq \sqrt{ \sum_{i=1}^{N} L^2 \|x^{(k+1,0)} - x^{(k,i)}\|_2^2 }
		\leq L \sqrt{N} \|x^{(k+1,0)} - x^{(k,0)}\|_2, \label{eq: z03101612}
	\end{align}
	where the last relation holds due to $\|x^{(k+1,0)} - x^{(k,i)}\|_2\leq \|x^{(k+1,0)} - x^{(k,0)}\|_2$ (see \eqref{eq: CMPGA}).	Besides, according to the term $\Xi^{(k)} y^{(k+1,0)} - Q_{(k+1,0)} y^{(k+1,0)}$ on the right side of \eqref{eq: z03101603}, by exploiting \Cref{lemma: Q is Lips on S when f is Lips on X},	there holds with $I_{n}\in\mathbb{R}^{n\times n}$ being the identity matrix that
	\begin{align}
		&\left\| \Xi^{(k)} - Q_{(k+1,0)} I_{n} \right\|_2 
		= \sup\left\{ |Q_{(k,i)}-Q_{(k+1,0)}| : i\in\mathbb{N}_N \right\} \notag \\
		& \leq L_Q \sup \left\{   \|x^{(k,i)} - x^{(k+1,0)}\|_2 + Q_{(k+1,0)}  \|y^{(k,i)}-y^{(k+1,0)}\|_2 :i\in\mathbb{N}_N  \right\} 
		\notag \\ &
		\leq L_Q  \|x^{(k,0)} - x^{(k+1,0)}\|_2, \label{eq: z03101816}
	\end{align}
	where the second relation follows from \Cref{lemma: Q is Lips on S when f is Lips on X}, and the third relation follows from \eqref{eq: CMPGA}.  Hence, the equation \eqref{eq: z03101603} leads to some $C_{2,x}>0$ so that
	\begin{equation}
		\|w^{k+1}_x\|_2 
		\leq C_{2,x}  \|x^{(k,0)}-x^{(k+1,0)}\|_2  ,  \label{eq: z0328 1759}
	\end{equation}
	owing to $\|\Lambda^{(k)}\|_2\leq \underline{\alpha}_{\star}^{-1}$, \eqref{eq: z03101612}, \eqref{eq: z03101816}, the boundedness of $\{y^{(k,0)}:k\in\mathbb{N}\}$ and \Cref{theorem: subsequential convergent of MPGAc} \StatementNum{1}. Finally, this lemma follows from \eqref{eq: z0328 1800} and \eqref{eq: z0328 1759}.
\end{proof}

In view of \Cref{lemma: MPGAc d to 0} \StatementNum{3} and \Cref{proposition: CMPGA satisfies relative error}, we get the following corollary.

\begin{corollary}\label{corollary: CMPGA subdiff converge to 0}
	Suppose that Assumptions \ref{Assumption: X is compact}-\ref{Assumption: g* calm} hold. Let the sequence $\{(x^{(t)},y^{(t)}):t\in\mathbb{N} \}$ be generated by CMPGA\_ML, and $(x^{(k,i)},y^{(k,i)})$ be defined by \eqref{eq: z0504 2037}. Then, 
	\begin{equation}\label{eq: CMPGA subdiff converge to 0}
		\lim_{k\to\infty} \dist\left( 0,\widehat{\partial} Q(x^{(k,0)},y^{(k,0)}) \right) = 0.
	\end{equation}
\end{corollary}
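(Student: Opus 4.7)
The plan is to combine the relative-error bound from Proposition \ref{proposition: CMPGA satisfies relative error} with the step-size vanishing in Lemma \ref{lemma: MPGAc d to 0}\StatementNum{3}. Concretely, Proposition \ref{proposition: CMPGA satisfies relative error} produces, for all sufficiently large $k$, a vector $w^{k+1}\in\widehat{\partial}Q(x^{(k+1,0)},y^{(k+1,0)})$ with
\[
\|w^{k+1}\|_2 \leq \widetilde{C}_2\Bigl(\|x^{(k+1,0)}-x^{(k,0)}\|_2 + Q_{(k+1,0)}\|y^{(k+1,0)}-y^{(k,0)}\|_2\Bigr),
\]
so the conclusion \eqref{eq: CMPGA subdiff converge to 0} will follow as soon as each of the two bracketed terms is shown to tend to $0$.

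For the $x$-term, I would exploit the identity $(x^{(k,N+1)},y^{(k,N+1)})=(x^{(k+1,0)},y^{(k+1,0)})$ and apply the triangle inequality across one epoch: $\|x^{(k+1,0)}-x^{(k,0)}\|_2 \leq \sum_{i=0}^{N}\|x^{(k,i+1)}-x^{(k,i)}\|_2$. Since Lemma \ref{lemma: MPGAc d to 0}\StatementNum{3} yields $\|x^{(k,i+1)}-x^{(k,i)}\|_2^2 \leq \|x^{(k,i+1)}-x^{(k,i)}\|_2^2 + Q_{c_\infty}\|y^{(k,i+1)}-y^{(k,i)}\|_2^2\to 0$ for every $i\in\mathbb{N}_N^0$, each summand vanishes and thus $\|x^{(k+1,0)}-x^{(k,0)}\|_2\to 0$.

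The main subtlety is the $y$-term, because Lemma \ref{lemma: MPGAc d to 0}\StatementNum{3} only supplies $Q_{c_\infty}\|y^{(k,i+1)}-y^{(k,i)}\|_2^2\to 0$, whereas the coefficient in the bound is $Q_{(k+1,0)}$ rather than $Q_{c_\infty}$. Here I would use the cyclic structure \eqref{eq: CMPGA} to note that $y$ is only updated at $i=0$, so $\|y^{(k+1,0)}-y^{(k,0)}\|_2 = \|y^{(k,1)}-y^{(k,0)}\|_2$, and then split into two cases. If $Q_{c_\infty}>0$, Lemma \ref{lemma: MPGAc d to 0}\StatementNum{3} with $i=0$ forces $\|y^{(k,1)}-y^{(k,0)}\|_2\to 0$, and since $Q_{(k+1,0)}\leq F(x^{(0)})$ by \eqref{eq: descent lemma2}, the product tends to $0$. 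If $Q_{c_\infty}=0$, then the monotonicity of $\{Q_{(k,0)}\}$ in CMPGA\_ML (together with Lemma \ref{lemma: MPGAc d to 0}\StatementNum{2}) yields $Q_{(k+1,0)}\to 0$, and compactness of $\mathcal{Y}$ from Lemma \ref{Lemma: y falls into Y} bounds $\|y^{(k+1,0)}-y^{(k,0)}\|_2$ uniformly, so again the product vanishes.

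Putting both pieces together gives $\|w^{k+1}\|_2\to 0$, and since $\dist(0,\widehat{\partial}Q(x^{(k+1,0)},y^{(k+1,0)}))\leq \|w^{k+1}\|_2$, the corollary follows. The only non-routine ingredient is the two-case argument for the $y$-term; aside from that the proof is a direct combination of the two cited results.
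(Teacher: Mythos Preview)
Your proof is correct and follows exactly the route the paper indicates (combining Lemma~\ref{lemma: MPGAc d to 0}\StatementNum{3} with Proposition~\ref{proposition: CMPGA satisfies relative error}); the paper merely cites these two results without writing out the argument, and your two-case analysis on whether $Q_{c_\infty}>0$ or $Q_{c_\infty}=0$ is precisely the detail needed to reconcile the coefficient $Q_{(k+1,0)}$ in Proposition~\ref{proposition: CMPGA satisfies relative error} with the coefficient $Q_{c_\infty}$ in Lemma~\ref{lemma: MPGAc d to 0}\StatementNum{3}. One could avoid the case split by instead invoking Proposition~\ref{proposition: CMPGA sufficient descent}\StatementNum{1}, which directly gives $\|x^{(k+1,0)}-x^{(k,0)}\|_2^2 + Q_{(k+1,0)}\|y^{(k+1,0)}-y^{(k,0)}\|_2^2 \leq C_1^{-1}(Q_{(k,0)}-Q_{(k+1,0)})\to 0$, but your argument is equally valid.
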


We remark that, by following the same arguments, one can show that results in \Cref{lemma: Q is Lips on S when f is Lips on X} and \Cref{proposition: CMPGA satisfies relative error} also hold when $\{(x^{(t)},y^{(t)}):t\in\mathbb{N} \}$ is generated by CMPGA equipped with a nonmonotone line-search scheme, i.e., $M>0$. Consequently, the convergence result \eqref{eq: CMPGA subdiff converge to 0} obtained in \Cref{corollary: CMPGA subdiff converge to 0} holds for CMPGA, regardless of the choice of $M$.

With the help of \Cref{proposition: CMPGA sufficient descent} and \Cref{proposition: CMPGA satisfies relative error}, we are now ready to establish the sequential convergence of CMPGA\_ML.


\begin{theorem}\label{theorem: CMPGA is global convergence}
	Suppose that Assumptions 2-4 hold.  If $Q$ satisfies the KL property at any $(x,y)\in\mathcal{S}$, then the sequence $\{ x^{(t)}:t\in\mathbb{N} \}$ generated by CMPGA\_ML converges to a critical point $x^{\star}$ of $F$.
\end{theorem}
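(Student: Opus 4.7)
The plan is to invoke the KL-based convergence framework of \Cref{proposition: KL convergence framework} on the epoch-sampled sequence $\{(x^{(k,0)},y^{(k,0)}):k\in\mathbb{N}\}$ with $H:=Q$ and weights $a_k:=Q_{(k+1,0)}$, and then to transfer the resulting convergence of $\{x^{(k,0)}\}$ to the full iterate sequence $\{x^{(t)}\}$ by means of the intra-epoch estimate in \Cref{lemma: MPGAc d to 0}\StatementNum{3}. Boundedness of the epoch-sampled sequence is automatic: by \Cref{lemma: MPGAc d to 0}\StatementNum{1} it lies in $\mathcal{S}\subseteq\mathcal{X}\times\mathcal{Y}$, which is bounded under \Cref{Assumption: X is compact} and \Cref{Lemma: y falls into Y}.

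The three conditions of \Cref{proposition: KL convergence framework} are verified as follows. The sufficient decrease condition is exactly \Cref{proposition: CMPGA sufficient descent}\StatementNum{1}. The continuity condition, namely that $\xi:=\lim_{k\to\infty} Q_{(k,0)}$ exists and $Q\equiv\xi$ on the accumulation set $\Upsilon$, follows from \Cref{proposition: CMPGA sufficient descent}\StatementNum{2}, the continuity of $Q$ on $\mathcal{S}$ supplied by \Cref{Lemma: eta is Lipschitz continuous}, and the inclusion $\Upsilon\subseteq\mathcal{S}$ given by \Cref{theorem: subsequential convergent of MPGAc}\StatementNum{1}. For the relative error condition, \Cref{proposition: CMPGA satisfies relative error} supplies $w^{k+1}\in\widehat{\partial} Q(x^{(k+1,0)},y^{(k+1,0)})\subseteq\partial Q(x^{(k+1,0)},y^{(k+1,0)})$ with $\|w^{k+1}\|_2\leq \widetilde{C}_2(\|x^{(k+1,0)}-x^{(k,0)}\|_2+Q_{(k+1,0)}\|y^{(k+1,0)}-y^{(k,0)}\|_2)$, whereas the framework demands the weight $\sqrt{a_k}=\sqrt{Q_{(k+1,0)}}$ on the $y$-increment. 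The uniform bound $Q_{(k+1,0)}\leq F(x^{(0)})$ gives $Q_{(k+1,0)}\leq\sqrt{F(x^{(0)})}\,\sqrt{Q_{(k+1,0)}}$, so this mismatch is absorbed into an enlarged constant $C_2:=\widetilde{C}_2\max\{1,\sqrt{F(x^{(0)})}\}$.

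Since by hypothesis $Q$ satisfies the KL property at every point of $\mathcal{S}\supseteq\Upsilon$, \Cref{proposition: KL convergence framework} then yields $\sum_{k=0}^{\infty}\|x^{(k+1,0)}-x^{(k,0)}\|_2<+\infty$ and $x^{(k,0)}\to x^{\star}$ for some $x^{\star}\in\mathbb{R}^n$. To upgrade this to convergence of $\{x^{(t)}\}$, \Cref{lemma: MPGAc d to 0}\StatementNum{3} gives $\|x^{(k,i+1)}-x^{(k,i)}\|_2\to 0$ for each $i\in\mathbb{N}_N^0$, and telescoping over one epoch yields $\max_{i}\|x^{(k,i)}-x^{(k,0)}\|_2\to 0$; hence $x^{(t)}\to x^{\star}$. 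Finally, using boundedness of $\{y^{(t)}\}\subseteq\mathcal{Y}$ to extract a subsequence along which $(x^{(t)},y^{(t)})\to(x^{\star},y^{\star})$, \Cref{theorem: subsequential convergent of MPGAc}\StatementNum{2} identifies $x^{\star}$ as a critical point of $F$. The principal obstacle is the weight mismatch between the $Q_{(k+1,0)}$ delivered by \Cref{proposition: CMPGA satisfies relative error} and the $\sqrt{a_k}$ demanded by the framework, which the uniform upper bound $Q\leq F(x^{(0)})$ resolves, together with the bookkeeping needed to lift convergence from the epoch-sampled subsequence to the full iterate sequence.
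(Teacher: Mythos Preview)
Your proof is correct and follows essentially the same route as the paper: verify the three hypotheses of \Cref{proposition: KL convergence framework} via \Cref{proposition: CMPGA sufficient descent} and \Cref{proposition: CMPGA satisfies relative error}, absorb the weight mismatch using $Q_{(k+1,0)}\leq F(x^{(0)})$ exactly as you do, and then lift convergence of $\{x^{(k,0)}\}$ to $\{x^{(t)}\}$ before invoking \Cref{theorem: subsequential convergent of MPGAc}\StatementNum{2}. The only minor difference is in the lifting step: the paper uses the block structure \eqref{eq: CMPGA} to obtain the sharper summability $\sum_{t}\|x^{(t+1)}-x^{(t)}\|_2<\infty$ directly from $\sum_{k}\|x^{(k+1,0)}-x^{(k,0)}\|_2<\infty$, whereas you appeal to \Cref{lemma: MPGAc d to 0}\StatementNum{3} to get only $\max_i\|x^{(k,i)}-x^{(k,0)}\|_2\to 0$; both suffice for the present statement, though the paper's version is the one reused in the subsequent rate analysis.
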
	

\begin{proof}
	\Cref{proposition: CMPGA satisfies relative error} yields some $K_2>0$ and some $w^{k+1}\in
	\widehat{\partial}	 Q(x^{(k+1,0)},y^{(k+1,0)})$ such that
	\begin{equation*}
		\|w^{k+1}\|_2 \leq C_2 \left( \|x^{(k+1,0)}-x^{(k,0)}\|_2+\sqrt{Q_{(k+1,0)}}\|y^{(k+1,0)}-y^{(k,0)}\|_2  \right)
	\end{equation*}
	holds for any $k\geq K_2$ with $C_2 := \widetilde{C_2}\max\{1,\sqrt{F(x^{(0)})} \}$ owing to $Q_{(k+1,0)}\leq F(x^{(0)})$. This indicates that the condition \StatementNum{2} in \Cref{proposition: KL convergence framework} holds for the sequence $\{ (x^{(k,0)},y^{(k,0)}):k\in\mathbb{N} \} $ generated by CMPGA\_ML. Then, invoking \Cref{proposition: CMPGA sufficient descent}, we derive from \Cref{proposition: KL convergence framework} that
	\begin{equation}\label{eq: z1211 1627}
		\sum_{k=0}^{\infty} \|x^{(k+1,0)} - x^{(k,0)}\|_2 < \infty.
	\end{equation}
	Notice that, according to CMPGA, the relation \eqref{eq: CMPGA} yields 
	\begin{equation}\label{eq: z0317 2239}
		\sum_{i=1}^N \|x^{(k,i+1)} - x^{(k,i)}\|_2 = \sum_{i=1}^N \|x^{(k+1,0)}_i - x^{(k,0)}_i\|_2 \leq \sqrt{N} \|x^{(k+1,0)} - x^{(k,0)}\|_2.
	\end{equation}
	In view of \eqref{eq: z1211 1627} and \eqref{eq: z0317 2239}, we derive 
	\begin{equation*}
		\sum_{t=0}^{\infty} \|x^{(t+1)} - x^{(t)}\|_2
		= \sum_{k=0}^{\infty} \sum_{i=1}^N \|x^{(k,i+1)} - x^{(k,i)}\|_2 
		< +\infty.
	\end{equation*}
	Therefore, the sequence $\{ x^{(t)}:t\in\mathbb{N} \}$ converges to some  $x^{\star}\in\mathbb{R}^n$, and \Cref{theorem: subsequential convergent of MPGAc} \StatementNum{2} demonstrates that the $x^{\star}$ belongs to $\dom(F)$ and is a critical point of $F$. This completes the proof.
\end{proof}	

Finally, we shall show that Assumptions \ref{Assumption: f is L continuous}-\ref{Assumption: g* calm} and the KL condition in \Cref{theorem: CMPGA is global convergence} hold for the sparse signal recovery problems \eqref{problem:L1dL2} and \eqref{problem:L1dSK}, which together with \Cref{Assumption: X is compact} lead to the sequential convergence of CMPGA\_ML for solving these two problems. Recall that $f$ in both problems can be viewed as the sum of $\ell_1$ norm and the indicator function on $\{x\in\mathbb{R}^n:\underline{x} \leq  x \leq \overline{x} \} $, while $g^*$ is indicator functions on the $\ell_2$ unit ball and $\{x\in\mathbb{R}^n: \|x\|_{\infty}\leq 1, \|x\|_1 \leq K \} $ respectively. Hence, Assumptions \ref{Assumption: f is L continuous} and \ref{Assumption: g* calm} are automatically fulfilled for problems \eqref{problem:L1dL2} and \eqref{problem:L1dSK}. In the literature, the KL property of a concrete function is often verified via showing that it is a proper, closed and semi-algebraic function. Since $Q$ may be not lower semicontinuous on $\mathbb{R}^n\times \mathbb{R}^n$, we could not deduce its KL property even if it is a proper semi-algebraic function. To remedy this issue, we introduce a potential function $Q_{\epsilon}:\mathbb{R}^n\times \mathbb{R}^n \to (-\infty,+\infty] $ for $\epsilon>0$, which is defined as the sum of $Q$ and the indicator function on the set $\{(x,y)\in\mathbb{R}^n\times \mathbb{R}^n: \eta(x,y)\geq \epsilon \} $. Clearly, for any $\epsilon>0$, $Q_{\epsilon}$ is closed on $\mathbb{R}^n\times \mathbb{R}^n$. Furthermore, we show the connection between the KL property of $Q$ and $Q_{\epsilon}$ in the next proposition.

\begin{proposition}\label{proposition: Q+eta is KL -> Q is KL}
	Suppose that $g^*$ is continuous on $\dom(g^*)$. If for any $\epsilon>0$, $Q_{\epsilon}$ is a KL function, then $Q$ is also a KL function.
\end{proposition}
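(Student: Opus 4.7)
The plan is to verify the KL property of $Q$ pointwise by localizing to a neighborhood on which $Q$ coincides with $Q_{\epsilon}$ for a suitably chosen $\epsilon>0$, and then transferring the KL inequality of $Q_{\epsilon}$ over to $Q$.

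First, I would fix an arbitrary $(\bar x,\bar y)\in\dom(\partial Q)\subseteq\dom(Q)$, so that $\bar x\in\dom(f)$, $\bar y\in\dom(g^*)$, and $\eta(\bar x,\bar y)>0$. I would then set $\epsilon:=\eta(\bar x,\bar y)/2>0$ and use the continuity of $g^*$ on $\dom(g^*)$ (the standing hypothesis) together with the obvious continuity of $\langle\cdot,\cdot\rangle$ to conclude that $\eta$ is continuous at $(\bar x,\bar y)$ relative to $\mathbb{R}^n\times\dom(g^*)$. This yields some $r>0$ such that $\eta(x,y)>\epsilon$ for every $(x,y)\in B((\bar x,\bar y),r)\cap(\dom(f)\times\dom(g^*))$.

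Next, I would compare $Q$ and $Q_{\epsilon}$ on $U:=B((\bar x,\bar y),r)$. On the set $U\cap(\dom(f)\times\dom(g^*))$ the constraint $\eta\geq\epsilon$ is inactive by the previous step, so both functions equal $\zeta(x)/\eta(x,y)$; outside $\dom(f)\times\dom(g^*)$ both are $+\infty$. Hence $Q\equiv Q_{\epsilon}$ on $U$, which immediately gives $\widehat\partial Q(z)=\widehat\partial Q_{\epsilon}(z)$ and $\partial Q(z)=\partial Q_{\epsilon}(z)$ for every $z\in U$, because the Fr\'echet (respectively limiting) subdifferential depends only on the local behavior of the function. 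In particular $(\bar x,\bar y)\in\dom(\partial Q_{\epsilon})$ and $Q(\bar x,\bar y)=Q_{\epsilon}(\bar x,\bar y)$.

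Now I would invoke the hypothesis that $Q_{\epsilon}$ is a KL function: there exist $\epsilon_0\in(0,+\infty]$, $\delta_0>0$, and a continuous concave desingularizing function $\phi:[0,\epsilon_0)\to\mathbb{R}_+$ such that
\begin{equation*}
\phi'(Q_{\epsilon}(z)-Q_{\epsilon}(\bar x,\bar y))\,\dist(0,\partial Q_{\epsilon}(z))\geq 1
\end{equation*}
for every $z\in\mathcal{B}^{\epsilon_0}_{Q_{\epsilon}}((\bar x,\bar y),\delta_0)$. Shrinking $\delta_0$ so that $\delta_0\leq r$ and using $Q\equiv Q_{\epsilon}$, $\partial Q\equiv\partial Q_{\epsilon}$ on $U$, the same inequality holds verbatim with $Q$ in place of $Q_{\epsilon}$ for every $z\in\mathcal{B}^{\epsilon_0}_{Q}((\bar x,\bar y),\delta_0)$. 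Since $(\bar x,\bar y)\in\dom(\partial Q)$ was arbitrary, this establishes that $Q$ satisfies the KL property at every point of $\dom(\partial Q)$, i.e.\ $Q$ is a KL function. The only subtle point is ensuring $Q$ and $Q_{\epsilon}$ genuinely agree on an open neighborhood (so their generalized subdifferentials agree locally), and this is exactly what the choice $\epsilon<\eta(\bar x,\bar y)$ combined with continuity of $g^*$ guarantees.
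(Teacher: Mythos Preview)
Your proof is correct and follows essentially the same approach as the paper's: fix $(\bar x,\bar y)\in\dom(\partial Q)$, set $\epsilon=\tfrac12\eta(\bar x,\bar y)$, use the continuity of $g^*$ on $\dom(g^*)$ to obtain a neighborhood on which $Q$ and $Q_\epsilon$ coincide, and then transfer the KL property locally. The paper's argument is condensed to three sentences, while you have spelled out the details (in particular the verification that the two functions agree on a full open ball and hence have the same limiting subdifferentials there), but the underlying idea is identical.
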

\begin{proof}
	Let $(x,y)\in\dom(\partial Q)$ and $\epsilon:=\frac{1}{2}\eta(x,y)$. Then one can check that $Q = Q_{\epsilon} $ holds on $\mathcal{B}((x,y),\delta)$ for some $\delta>0$ due to the continuity of $g^*$ on $\dom(g^*)$. Therefore, $Q$ satisfies the KL property at $(x,y)$ since $Q_{\epsilon}$ is a KL function.
\end{proof}

By the definition of a semi-algebraic function, one can easily check that for any $\epsilon>0$, the respective $Q_{\epsilon}$ for problems \eqref{problem:L1dL2} and \eqref{problem:L1dSK} is a proper closed semi-algebraic function, which together with \cite[Theorem 4.1]{Attouch-bolt-redont-soubeyran:2010} indicates that $Q_{\epsilon}$ is a KL function for any $\epsilon>0$. Hence, with the help of \Cref{proposition: Q+eta is KL -> Q is KL}, we conclude that the respective $Q$ for these two problems is a KL function. Using the above display and the discussions below \Cref{Assumption: X is compact}, we finally obtain the following theorems concerning the sequential convergence of CMPGA\_ML for solving problems \eqref{problem:L1dL2} and \eqref{problem:L1dSK}, respectively.

\begin{theorem}
	The sequence $\{ x^{(t)}:t\in\mathbb{N} \}$ generated by CMPGA\_ML for problem \eqref{problem:L1dL2} converges to a critical point of $F$, if $F(x^{(0)})< 1+\frac{\lambda}{2}\|b\|_2^2$.
\end{theorem}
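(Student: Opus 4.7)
The plan is to show that problem \eqref{problem:L1dL2} together with the initial-point condition satisfies every hypothesis of \Cref{theorem: CMPGA is global convergence}; once that verification is done, the conclusion is immediate. Three things must be checked: \Cref{Assumption: X is compact} (compactness of the level set $\mathcal{X}$), Assumptions \ref{Assumption: f is L continuous}--\ref{Assumption: g* calm} (local Lipschitz continuity of $f$ and calmness of $g^*$), and the KL property of $Q$ at every point of $\mathcal{S}$.

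First I would handle \Cref{Assumption: X is compact}. Boundedness of $\mathcal{X}$ is automatic from the box constraint $\underline{x}\leq x\leq\overline{x}$, so only closedness is at stake. For the $L_1/L_2$ model one has $\zeta(x)=\|x\|_1+\iota_{[\underline{x},\overline{x}]}(x)+\tfrac{\lambda}{2}\|x\|_2\|Ax-b\|_2^2$ and $g(x)=\|x\|_2$, so the set $\mathcal{O}=\{x:\zeta(x)=g(x)=0\}$ equals $\{0\}$. Because $\|z\|_1/\|z\|_2\geq 1$ on $\mathbb{R}^n\setminus\{0\}$ (with equality along $1$-sparse directions) and $\tfrac{\lambda}{2}\|Az-b\|_2^2\to\tfrac{\lambda}{2}\|b\|_2^2$ as $z\to 0$, one obtains $\liminf_{z\to 0}F(z)=1+\tfrac{\lambda}{2}\|b\|_2^2$. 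Hence the hypothesis $F(x^{(0)})<1+\tfrac{\lambda}{2}\|b\|_2^2$ is exactly condition \eqref{eq:assumption for F0}, so \Cref{proposition: X is closed} delivers closedness of $\mathcal{X}$.

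Next I would verify Assumptions \ref{Assumption: f is L continuous} and \ref{Assumption: g* calm}. Each $f_i$ is the sum of $\|\cdot\|_1$ (globally $1$-Lipschitz) and $\iota_{[\underline{x}_i,\overline{x}_i]}$, which vanishes on the bounded box in which every point of $\mathcal{X}$ lies; hence $f$ is locally (in fact globally, on $\mathcal{X}$) Lipschitz. Since $g=\|\cdot\|_2$, the conjugate is $g^*=\iota_{\{y:\|y\|_2\leq 1\}}$, which is identically $0$ on its domain and therefore satisfies the calmness condition on $\dom(g^*)$ with modulus $0$.

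Finally I would establish the KL property of $Q$ using \Cref{proposition: Q+eta is KL -> Q is KL}: it suffices to show that $Q_\epsilon$ is a KL function for every $\epsilon>0$. The numerator $\zeta$ and denominator $\eta(x,y)=\langle x,y\rangle-g^*(y)$ are semi-algebraic (polynomials, $\ell_1$ and $\ell_2$ norms, and indicators of semi-algebraic sets are all semi-algebraic, and semi-algebraicity is preserved by finite sums, products, and quotients on semi-algebraic domains), so $Q_\epsilon$ is a proper closed semi-algebraic function, and by the Bolte--Daniilidis--Lewis theorem cited just above \Cref{lemma: Uniformized KL property} it is a KL function. Since $g^*$ is continuous on its domain, \Cref{proposition: Q+eta is KL -> Q is KL} then yields the KL property of $Q$ at every point of $\mathcal{S}$. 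With all hypotheses of \Cref{theorem: CMPGA is global convergence} met, the sequence $\{x^{(t)}\}$ converges to a critical point of $F$. The only step that requires care is the level-set computation in the first paragraph; everything else follows directly from structural observations already recorded in the paper.
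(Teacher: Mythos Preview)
Your proposal is correct and follows essentially the same route as the paper: verify \Cref{Assumption: X is compact} via \Cref{proposition: X is closed} (using the liminf computation at the origin and the box constraint for boundedness), verify Assumptions~\ref{Assumption: f is L continuous}--\ref{Assumption: g* calm} directly from the explicit forms of $f$ and $g^*$, and establish the KL property of $Q$ through the semi-algebraicity of $Q_\epsilon$ and \Cref{proposition: Q+eta is KL -> Q is KL}, then invoke \Cref{theorem: CMPGA is global convergence}. The paper presents these verifications in the surrounding discussion rather than as a standalone proof, but the content is identical.
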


\begin{theorem}\label{theorem: CMPGA converges to critical}
	The sequence $\{ x^{(t)}:t\in\mathbb{N} \}$ generated by CMPGA\_ML for problem \eqref{problem:L1dSK} globally converges to a critical point of $F$. 
\end{theorem}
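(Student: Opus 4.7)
The plan is to derive this theorem as a direct corollary of \Cref{theorem: CMPGA is global convergence}, so the task reduces to verifying, for the specific model \eqref{problem:L1dSK}, each of its four prerequisites: \Cref{Assumption: X is compact} (compactness of the level set $\mathcal{X}$), \Cref{Assumption: f is L continuous} (local Lipschitz continuity of $f$ on $\mathcal{X}$), \Cref{Assumption: g* calm} (calmness of $g^*$ on its domain), and the KL property of $Q$ at every point of $\mathcal{S}$. The structural ingredients of \eqref{problem:L1dSK} are $h(x) = \frac{\lambda}{2}\|Ax-b\|_2^2$, $g(x) = \|x\|_{(K)}$, and $f(x) = \|x\|_1 + \iota_{[\underline{x},\overline{x}]}(x)$, which I will exploit repeatedly.

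For \Cref{Assumption: X is compact}, I first observe that $\zeta(x) = \|x\|_1 + \frac{\lambda}{2}\|Ax-b\|_2^2 > 0$ for all $x\in\mathbb{R}^n$, since $\zeta(0) = \frac{\lambda}{2}\|b\|_2^2 > 0$ by $b\neq 0$, while $\zeta(x)\geq \|x\|_1 > 0$ for $x\neq 0$. Thus $\zeta$ and $g$ never vanish simultaneously, which by \cite[Proposition 4.4]{NaZhang-QiaLi:2022SIAM-OPT} (invoked in the discussion below \Cref{Assumption: X is compact}) yields lower semicontinuity of $F$ and hence closedness of $\mathcal{X}$. Boundedness follows from $\mathcal{X}\subseteq\{x:\underline{x}\leq x\leq\overline{x}\}$, so compactness is obtained independently of the choice of $x^{(0)}$. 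For \Cref{Assumption: f is L continuous}, I note that on $\mathcal{X}$ the indicator $\iota_{[\underline{x},\overline{x}]}$ vanishes, so $f$ coincides with the globally Lipschitz $\ell_1$-norm; hence $f$ is even globally (and therefore locally) Lipschitz on $\mathcal{X}$. For \Cref{Assumption: g* calm}, the Fenchel conjugate of the vector $K$-norm is $g^* = \iota_{\{y:\|y\|_\infty\leq 1,\,\|y\|_1\leq K\}}$, which is identically zero on its domain, and so the calmness condition holds trivially with modulus $0$ at every point.

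The remaining and slightly more technical step is the KL property of $Q$. Here I will appeal to \Cref{proposition: Q+eta is KL -> Q is KL}, which reduces the verification to showing that $Q_\epsilon$ is a KL function for every $\epsilon>0$. The key observation is that $Q_\epsilon$ is the ratio of two polynomials (after isolating the box constraint) plus indicator functions on semi-algebraic sets: the numerator $\|x\|_1+\frac{\lambda}{2}\|Ax-b\|_2^2$ is a piecewise-polynomial semi-algebraic function; the denominator $\langle x,y\rangle - g^*(y)$ is polynomial on $\dom(g^*)$, which itself is a polyhedron; and the super-level set $\{\eta\geq\epsilon\}$ is semi-algebraic. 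Hence $Q_\epsilon$ is a proper closed semi-algebraic function, so by \cite[Theorem 3.1]{Bolte-Daniilidis-Lewis:2007SIAMOPT} it is a KL function. Continuity of $g^*$ on $\dom(g^*)$ (being an indicator of a closed convex set) together with \Cref{proposition: Q+eta is KL -> Q is KL} then transfers the KL property from $Q_\epsilon$ to $Q$. With all four hypotheses of \Cref{theorem: CMPGA is global convergence} in place, the conclusion follows.

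The main obstacle, and the only step that needs real care rather than bookkeeping, is the semi-algebraicity of $Q_\epsilon$: one must verify that the graph of $Q_\epsilon$ can be described by finitely many polynomial equations and inequalities. I expect this to be straightforward because each piece ($\ell_1$-norm, quadratic, vector $K$-norm's conjugate as an intersection of polyhedra, and the super-level set of $\eta$) is manifestly semi-algebraic, and semi-algebraicity is preserved under finite unions, intersections, products, quotients over sets where the denominator is bounded away from zero, and sums.
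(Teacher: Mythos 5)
Your proposal is correct and follows essentially the same route as the paper: compactness of $\mathcal{X}$ for an arbitrary $x^{(0)}$ via the everywhere-positive numerator and the box constraint, automatic verification of \Cref{Assumption: f is L continuous} and \Cref{Assumption: g* calm} from the $\ell_1$-plus-indicator structure of $f$ and the polyhedral-indicator form of $g^*$, and the KL property of $Q$ obtained by showing $Q_\epsilon$ is proper, closed and semi-algebraic and then invoking \Cref{proposition: Q+eta is KL -> Q is KL}, before applying \Cref{theorem: CMPGA is global convergence}. The only cosmetic difference is the citation used for the semi-algebraic-implies-KL step, which is immaterial.
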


\subsection{Convergence rate of CMPGA\_ML}
This subsection is devoted into the convergence rate analysis of CMPGA\_ML. We first deduce generally the asymptotic convergence rate of CMPGA\_ML based on the KL exponent of $Q$ in the next theorem. The proof follows a similar line of arguments to other convergence rate analysis based on the KL property (see \cite{Attouch-Bolte:MP2009,Wen-Chen-Pong:2018COA,Xu-Yin:SIAMIS:13}, for example) and thus are omitted here for brevity.

\begin{theorem}\label{theorem: convergence rate for KL exponents}
	Suppose that Assumptions 2-4 hold. Let $\{ x^{(t)}:t\in\mathbb{N} \}$ be the sequence generated by CMPGA\_ML. If $\{ x^{(t)}:t\in\mathbb{N} \}$ converges to some $x^{\star}\in\mathbb{R}^n$, and $Q$ satisfies the KL property at each $(x^{\star},y)\in\mathcal{S}$ with an exponent $\theta\in[0,1)$, then the following statements hold:
	\begin{enumerate}[label = {\upshape(\roman*)}, leftmargin=0.3cm, itemindent=0.3cm]
		\item If $\theta = 0$, $\{ x^{(t)}:t\in\mathbb{N} \}$ converges to $x^{\star}$ finitely;
		\item If $\theta \in (0,1/2]$, $\|x^{(t)}-x^{\star}\|_2\leq c_1 \tau^t$, $\forall t\geq T_1$ for some $T_1>0$, $c_1>0$, $\tau\in(0,1)$;
		\item If $\theta \in (1/2,1)$, $\|x^{(t)}-x^{\star}\|_2\leq c_2 t^{-(1-\theta)/(2\theta -1)}$, $\forall t\geq T_2$, for some $T_2>0$, $c_2>0$.
	\end{enumerate}	
\end{theorem}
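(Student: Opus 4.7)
The plan is to adapt the standard Attouch--Bolte KL-based convergence rate argument to the epoch-indexed sequence $\{(x^{(k,0)},y^{(k,0)}):k\in\mathbb{N}\}$ generated by CMPGA\_ML, and then transfer the resulting bounds back to the iteration index $t$. Let $\Upsilon$ denote the set of accumulation points of $\{(x^{(k,0)},y^{(k,0)}):k\in\mathbb{N}\}$. Since $x^{(k,0)}\to x^{\star}$ and $\{y^{(k,0)}\}\subseteq\mathcal{Y}$ is bounded, $\Upsilon\subseteq\{x^{\star}\}\times\mathcal{Y}$ is a compact subset of $\mathcal{S}$ by \Cref{theorem: subsequential convergent of MPGAc} (i), and \Cref{proposition: CMPGA sufficient descent} (ii) shows that $Q\equiv\xi$ on $\Upsilon$ with $\xi=\lim_{k\to\infty}Q_{(k,0)}$. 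Applying the uniformized KL inequality (\Cref{lemma: Uniformized KL property} (ii)) to $\Upsilon$ with exponent $\theta$ yields constants $c,\delta,\epsilon>0$ such that $\mathrm{dist}(0,\partial Q(x,y))\geq c\,(Q(x,y)-\xi)^{\theta}$ for all $(x,y)$ within distance $\delta$ of $\Upsilon$ and satisfying $\xi<Q(x,y)<\xi+\epsilon$; since $(x^{(k,0)},y^{(k,0)})$ approaches $\Upsilon$ and $Q_{(k,0)}\downarrow\xi$ by \Cref{proposition: CMPGA sufficient descent}, the iterates enter this KL region for every sufficiently large $k$.

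Next, set $\Delta_k:=Q_{(k,0)}-\xi\geq 0$ and $r_{k+1}:=\|x^{(k+1,0)}-x^{(k,0)}\|_2+\sqrt{Q_{(k+1,0)}}\,\|y^{(k+1,0)}-y^{(k,0)}\|_2$. Combining the sufficient-decrease estimate from \Cref{proposition: CMPGA sufficient descent} (i), which gives $\Delta_k-\Delta_{k+1}\geq C_1\,r_{k+1}^2$, with the relative-error bound from \Cref{proposition: CMPGA satisfies relative error} (after absorbing $\sqrt{Q_{(k+1,0)}}\leq\sqrt{F(x^{(0)})}$), which supplies $w^{k+1}\in\widehat{\partial}Q(x^{(k+1,0)},y^{(k+1,0)})\subseteq\partial Q(x^{(k+1,0)},y^{(k+1,0)})$ with $\|w^{k+1}\|_2\leq C_2\,r_{k+1}$, the KL inequality $\Delta_{k+1}^{\theta}\leq c^{-1}\|w^{k+1}\|_2$ produces the familiar recurrence $\Delta_{k+1}^{2\theta}\leq C\,(\Delta_k-\Delta_{k+1})$ for all large $k$. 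For $\theta=0$, any $k$ with $\Delta_{k+1}>0$ would force $\|w^{k+1}\|_2\geq c$, contradicting \Cref{corollary: CMPGA subdiff converge to 0}; hence $\Delta_{k+1}=0$ eventually and sufficient decrease pins $x^{(k+1,0)}$ down from some epoch on, which in turn pins every $x^{(t)}$ via \eqref{eq: CMPGA}. For $\theta\in(0,1/2]$ and $\theta\in(1/2,1)$, the recurrence yields $\Delta_k=O(\tau^{2k})$ for some $\tau\in(0,1)$ and $\Delta_k=O(k^{-1/(2\theta-1)})$ respectively, by a classical monotone-sequence lemma; these translate via $\sum_{j\geq k}r_{j+1}\leq c_3\,\Delta_k^{1-\theta}$, obtained by summing the KL inequality against the square-root of sufficient decrease in the usual way, into the corresponding bounds on the epoch tail sum.

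Finally, to pass from the epoch index $k$ to the iteration index $t=k(N+1)+i$ with $i\in\mathbb{N}_N^0$, the cyclic block structure \eqref{eq: CMPGA} gives $\|x^{(t)}-x^{(k,0)}\|_2\leq\|x^{(k+1,0)}-x^{(k,0)}\|_2\leq r_{k+1}$, so $\|x^{(t)}-x^{\star}\|_2\leq\sum_{j\geq k}r_{j+1}$; substituting $k=\lfloor t/(N+1)\rfloor$ produces the asserted linear and sublinear rates in $t$ with possibly larger constants and thresholds. The main technical obstacle I anticipate is controlling the coefficient $\sqrt{Q_{(k+1,0)}}$ inside $r_{k+1}$: when $\xi>0$, $Q_{(k+1,0)}$ is trapped in a positive interval for all large $k$ and $r_{k+1}$ is equivalent up to positive constants to $\|(x^{(k+1,0)}-x^{(k,0)},y^{(k+1,0)}-y^{(k,0)})\|_2$, reducing the argument to the template of \cite{Attouch-Bolte:MP2009,Wen-Chen-Pong:2018COA,Xu-Yin:SIAMIS:13}; when $\xi=0$, $x^{\star}$ is a global minimizer of $F$, the $y$-term drops out of $r_{k+1}$, and a separate (easier) argument based solely on $\|x^{(k+1,0)}-x^{(k,0)}\|_2$ is required to avoid losing information from the $y$-block in the recurrence.
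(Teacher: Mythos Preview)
Your proposal is correct and follows precisely the standard KL-based convergence-rate template that the paper explicitly defers to (citing \cite{Attouch-Bolte:MP2009,Wen-Chen-Pong:2018COA,Xu-Yin:SIAMIS:13}) and omits. The ingredients you assemble---the uniformized KL inequality on the accumulation set $\Upsilon\subseteq\{x^\star\}\times\mathcal{Y}$, the sufficient-decrease bound from \Cref{proposition: CMPGA sufficient descent}, the relative-error bound from \Cref{proposition: CMPGA satisfies relative error} with the $\sqrt{Q_{(k+1,0)}}$ rescaling (which is exactly how the paper passes from \Cref{proposition: CMPGA satisfies relative error} to the hypotheses of \Cref{proposition: KL convergence framework} in the proof of \Cref{theorem: CMPGA is global convergence}), and the transfer from the epoch index $k$ to the iteration index $t$ via \eqref{eq: CMPGA}---are all the pieces the paper has already set up for this purpose.
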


While it is routine to obtain the results in \Cref{theorem: convergence rate for KL exponents}, it is more challenging to estimate the KL exponent of $Q$ for a concrete model of \eqref{problem:root}. In what follows, we shall show that the KL exponent of $Q$ is $\frac{1}{2}$ under further assumptions on $f$, $h$ and $g$ in problem \eqref{problem:root}. These assumptions, in particular, hold for the sparse signal recovery problem \eqref{problem:L1dSK}. The following theorem tells that we can estimate the KL exponents of $Q$ via a non-fractional auxiliary function, whose proof is inspired by \cite[Theorem 4.2]{Zeng-Yu-Pong:2021SIAM-OPT}. We point out that the result of \cite[Theorem 4.2]{Zeng-Yu-Pong:2021SIAM-OPT} can not be directly applied to \Cref{proposition: KL exponents psi_alpha to Q}, since $g^*$ is not necessarily differentiable.

\begin{theorem}\label{proposition: KL exponents psi_alpha to Q}
	Let $(\overline{x},\overline{y})\in \dom(\partial Q)$, $\alpha := Q(\overline{x},\overline{y})$ and $g^*$ satisfy the calmness condition on $\dom(g^*)$. Let $\psi_{\alpha}:\mathbb{R}^n\times\mathbb{R}^n\to(-\infty,+\infty]$ be defined at $(x,y)\in\mathbb{R}^n\times \mathbb{R}^n$ as
	\begin{equation*}
		\psi_{\alpha}(x,y) := \begin{cases}
			\zeta(x) + \alpha(g^*(y)-\innerP{x}{y}),  &\text{if $(x,y)\in\dom(\zeta)\times\dom(g^*)$},\\
			+\infty, &\text{else.}
		\end{cases}
	\end{equation*}
	If $\psi_{\alpha}$ satisfies the KL property with the exponent $\theta\in [0,1)$ at $(\overline{x},\overline{y})$, then $Q$ satisfies the KL property with the same exponent $\theta\in [0,1)$ at $(\overline{x},\overline{y})$.
\end{theorem}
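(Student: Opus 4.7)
The key observation is the algebraic identity
\[
	\psi_\alpha(x,y) \;=\; \zeta(x) - \alpha\,\eta(x,y) \;=\; \eta(x,y)\bigl(Q(x,y)-\alpha\bigr)
\]
valid on $\dom(Q)$, together with $\psi_\alpha(\overline{x},\overline{y})=0$ (since $\zeta(\overline{x}) = \alpha\,\eta(\overline{x},\overline{y})$). My plan is therefore to compare $\partial Q$ with $\partial \psi_\alpha$ subgradient-by-subgradient, push the KL inequality for $\psi_\alpha$ through this identity, and absorb the lower-order error term by shrinking the KL neighborhood. Throughout I will work on an open ball around $(\overline{x},\overline{y})$ small enough that $\eta(x,y) \geq \tfrac{1}{2}\eta(\overline{x},\overline{y})$ (using that calmness of $g^*$ at $\overline{y}$ relative to $\dom(g^*)$ gives continuity of $\eta$ along $\dom(g^*)$) and such that $\|y\|_2$ and $Q(x,y)$ stay bounded.

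Next I would invoke \Cref{Corollary: limiting subdiff of frac}\StatementNum{2} on $Q$: for any $(x,y)\in\dom(\partial Q)$ near $(\overline{x},\overline{y})$ with $Q(x,y)>\alpha$ we have $\zeta(x) = Q(x,y)\eta(x,y) > 0$ and $\eta(x,y) > 0$, so the calmness assumption gives
\[
	\partial Q(x,y) \;=\; \frac{1}{\eta(x,y)}\bigl(\partial \zeta(x) - Q(x,y) y\bigr) \,\times\, \frac{Q(x,y)}{\eta(x,y)}\bigl(\partial g^*(y)-x\bigr),
\]
while by the separable-plus-smooth calculus
\[
	\partial \psi_\alpha(x,y) \;=\; \bigl(\partial\zeta(x) - \alpha y\bigr) \,\times\, \alpha\bigl(\partial g^*(y)-x\bigr).
\]
Given $(w_x,w_y)\in \partial Q(x,y)$, write $u_1 = \eta(x,y)\,w_x \in \partial\zeta(x)-Q(x,y)y$ and $u_2 = \tfrac{\eta(x,y)}{Q(x,y)}w_y \in \partial g^*(y)-x$ (or $u_2$ arbitrary when $\alpha=0$). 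Then $(u_1+(Q(x,y)-\alpha)y,\ \alpha u_2)$ lies in $\partial \psi_\alpha(x,y)$, and a direct norm estimate together with the local bounds on $\eta$, $Q$, $\alpha/Q$ and $\|y\|_2$ yields a constant $C>0$ (depending only on $(\overline{x},\overline{y})$) such that
\[
	\dist\bigl(0,\partial \psi_\alpha(x,y)\bigr) \;\leq\; C\bigl(\dist(0,\partial Q(x,y)) + (Q(x,y)-\alpha)\bigr).
\]

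Plugging the KL inequality for $\psi_\alpha$ with exponent $\theta$ into the left-hand side and using the identity $\psi_\alpha(x,y) = \eta(x,y)(Q(x,y)-\alpha)$ with the lower bound $\eta(x,y)\geq \tfrac{1}{2}\eta(\overline{x},\overline{y})$ gives
\[
	c'\,\bigl(Q(x,y)-\alpha\bigr)^\theta \;\leq\; C\bigl(\dist(0,\partial Q(x,y)) + (Q(x,y)-\alpha)\bigr)
\]
for some $c'>0$. The last step is to absorb the stray $(Q(x,y)-\alpha)$ on the right by writing it as $(Q(x,y)-\alpha)^{1-\theta}(Q(x,y)-\alpha)^\theta$ and shrinking $\epsilon$ so that $C(Q(x,y)-\alpha)^{1-\theta} \leq c'/2$ throughout the neighborhood; rearranging gives $(Q(x,y)-\alpha)^\theta \leq C''\,\dist(0,\partial Q(x,y))$, which is exactly the KL property of $Q$ with exponent $\theta$ at $(\overline{x},\overline{y})$.

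The main technical obstacle is verifying that the quotient-rule formula for $\partial Q$ is applicable throughout the KL neighborhood. This could fail at $(\overline{x},\overline{y})$ itself when $\alpha = 0$, since \Cref{Corollary: limiting subdiff of frac}\StatementNum{2} requires $\zeta>0$. However, the KL inequality needs only to hold at points with $Q(x,y) > Q(\overline{x},\overline{y}) = \alpha$, and on this set $\zeta(x) = Q(x,y)\eta(x,y) > 0$, so the formula still applies and the argument above goes through uniformly in $\alpha\geq 0$. The other care-point is ensuring calmness of $g^*$ gives enough regularity for the subdifferential calculus; this is exactly the hypothesis of \Cref{Corollary: limiting subdiff of frac}\StatementNum{2}, so no additional work is needed.
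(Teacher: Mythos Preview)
Your proposal is correct and follows essentially the same approach as the paper: relate $\partial Q$ to $\partial\psi_\alpha$ via \Cref{Corollary: limiting subdiff of frac}\StatementNum{2}, use the identity $\psi_\alpha=\eta(Q-\alpha)$, apply the KL inequality for $\psi_\alpha$, and absorb the lower-order $(Q-\alpha)$ term by shrinking the neighborhood. The only presentational difference is that the paper runs the comparison as a downward chain of inequalities starting from $\dist(0,\partial Q)$, whereas you construct an explicit element of $\partial\psi_\alpha$ from one of $\partial Q$; one small imprecision is that your claimed \emph{equality} $\partial\psi_\alpha=(\partial\zeta-\alpha y)\times\alpha(\partial g^*-x)$ is only an inclusion when $\alpha=0$ (the $y$-component becomes the normal cone to $\dom(g^*)$), but since you only use the inclusion direction this does not affect the argument.
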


\begin{proof}
	Since $\psi_{\alpha}$ has the KL exponent $\theta$ at $(\overline{x},\overline{y})$ and $\psi_{\alpha}(\overline{x},\overline{y}) = 0$, there exist $\epsilon,\delta_1,c>0$ such that
	\begin{equation}\label{eq: z0904 1620}
		\dist(0,\partial\psi_{\alpha}(x,y))\geq c(\psi_{\alpha}(x,y))^{\theta}
	\end{equation}
	holds for any $(x,y)\in\mathcal{B}^{\epsilon}_{\psi_{\alpha}}((\overline{x},\overline{y}),\delta_1)$. It is worthy noting that $\mathcal{B}^{\epsilon}_{\psi_{\alpha}}((\overline{x},\overline{y}),\delta_1) \subseteq \dom(\zeta)\times\dom(g^*) $. Let $\eta(x,y):=\innerP{x}{y}-g^*(y)$ and $G:=\|\overline{y}\|_2 + 2\eta(\overline{x},\overline{y}) + \delta_1$. Owing to the continuity of $\eta$ and $\frac{\psi_{\alpha}^{1-\theta}}{\eta}$ at $(\overline{x},\overline{y})$ and the fact that  $\frac{\psi_{\alpha}^{1-\theta}(\overline{x},\overline{y})}{\eta(\overline{x},\overline{y})} = 0$, there exists some $\delta_2\leq \delta_1$, such that, for any $(x,y)\in\mathcal{B}^{\epsilon}_{\psi_{\alpha}}((\overline{x},\overline{y}),\delta_2)$, it holds
	\begin{align}\label{eq: z0904 1521}
		&\frac{\eta(\overline{x},\overline{y})}{2}\leq \eta(x,y)\leq G,\\
		&\frac{\psi_{\alpha}^{1-\theta}(x,y)}{\eta(x,y)}\leq \frac{c}{2G}.\label{eq: z0904 1522}
	\end{align}
	Set $\epsilon' = \epsilon/G$. Then  $0<\psi_{\alpha}(x,y)<\epsilon'G=\epsilon$ holds for any $(x,y)\in\mathcal{B}^{\epsilon'}_{Q}((\overline{x},\overline{y}),\delta_2)$ upon the fact that 
	\begin{equation}\label{eq: z0904 1622}
		\psi_{\alpha}(x,y) = (Q(x,y)-\alpha)\eta(x,y).
	\end{equation}
	Therefore, we deduce that \eqref{eq: z0904 1521} and \eqref{eq: z0904 1522} hold for any $(x,y)\in\mathcal{B}^{\epsilon'}_{Q}((\overline{x},\overline{y}),\delta_2)$.
	
	Let $(x,y)\in\mathcal{B}^{\epsilon'}_{Q}((\overline{x},\overline{y}),\delta_2)\cap\dom(\partial Q)$. Then we have $\zeta(x)>0$ and $\eta(x,y)>0$.
	By \Cref{Corollary: limiting subdiff of frac} \StatementNum{2}, there holds that
	\begin{align*}
		&\dist(0,\partial Q(x,y))
		= \frac{1}{\eta(x,y)}\dist \Big ( 0,(\partial\zeta(x)-Q(x,y)y) \times Q(x,y)(\partial g^*(y)-x) \Big )\\
		&= \frac{1}{\eta(x,y)}\inf\Big\{ \sqrt{\|u-Q(x,y)y\|^2_2+\|Q(x,y)(v-x)\|^2_2}:(u,v)\in\partial\zeta(x)\times\partial g^*(y) \Big\}\\
		&\overset{(\uppercase\expandafter{\romannumeral 1})}{\geq}
		\frac{1}{\sqrt{2}G}\Big ( \inf\{\|u-Q(x,y)y\|_2:u\in\partial\zeta(x)  \} +\inf\{\|Q(\overline{x},\overline{y})(v-x)\|_2:v\in\partial g^*(y) \} \Big )\\
		&\overset{(\uppercase\expandafter{\romannumeral 2})}{\geq}	
		\frac{1}{\sqrt{2}G}\Big ( \inf\{\|u-\alpha y\|_2:u\in\partial\zeta(x)  \} +\inf\{\|\alpha (v-x)\|_2:v\in\partial g^*(y) \} \Big )\\
		&\qquad -|Q(x,y)-\alpha | \frac{\|y\|_2}{\sqrt{2}G}
		\overset{(\uppercase\expandafter{\romannumeral 3})}{\geq}
		\frac{1}{\sqrt{2}G} \dist(0,\partial \psi_{\alpha}(x,y)) -\frac{1}{\sqrt{2}}|Q(x,y)-\alpha |\\
		&\overset{(\uppercase\expandafter{\romannumeral 4})}{\geq}
		\frac{1}{\sqrt{2}}\left( \frac{c}{G}(\psi_{\alpha}(x,y))^{\theta}-\frac{\psi_{\alpha}(x,y)}{\eta(x,y)} \right) 
		= \frac{1}{\sqrt{2}}\left( \frac{c}{G}-\frac{\psi_{\alpha}^{1-\theta}(x,y)}{\eta(x,y)} \right) (\psi_{\alpha}(x,y))^{\theta}\\
		&\overset{(\uppercase\expandafter{\romannumeral 5})}{\geq} \frac{c}{2\sqrt{2}G}\psi_{\alpha}^{\theta}(x,y) 
		\overset{(\uppercase\expandafter{\romannumeral 6})}{=}\frac{c}{2\sqrt{2}G} (\eta(x,y))^{\theta} (Q(x,y)-\alpha)^{\theta}\\
		&\overset{(\uppercase\expandafter{\romannumeral 7})}{\geq} \frac{c}{2\sqrt{2}G}\left( \frac{\eta(\overline{x},\overline{y})}{2} \right)^{\theta} (Q(x,y)-\alpha)^{\theta},
	\end{align*}
	where $(\uppercase\expandafter{\romannumeral 1})$ follows from \eqref{eq: z0904 1521}, $Q(\overline{x},\overline{y}) < Q(x,y)$, and the fact that $\sqrt{\alpha^2+\beta^2}\geq (\alpha+\beta)/\sqrt{2}$ holds for any $\alpha,\beta\geq 0$; $(\uppercase\expandafter{\romannumeral 2})$ comes from the triangle inequality; $(\uppercase\expandafter{\romannumeral 3})$ is derived from
	the relation $(\partial\zeta(x)-\alpha y)\times \alpha(\partial g^*(y)-x) \subseteq \partial \psi_{\alpha}(x,y) $ and the fact that $\|y\|_2 \leq \delta_2+\|\overline{y}\|_2 \leq G$; $(\uppercase\expandafter{\romannumeral 4})$ holds due to \eqref{eq: z0904 1620} and \eqref{eq: z0904 1622}; $(\uppercase\expandafter{\romannumeral 5})-(\uppercase\expandafter{\romannumeral 7})$ respectively come from \eqref{eq: z0904 1522}, \eqref{eq: z0904 1622} and \eqref{eq: z0904 1521}.	
\end{proof}

It is proved by \cite[Corollary 5.2]{Li-Pong:2018Foundations_of_computational_mathematics} that the proper closed function $f:\mathbb{R}^n \to (-\infty,+\infty]$ is a KL function with the exponent $1/2$, if $f$ can be written as 
\begin{equation}\label{eq: z03071436}
	f(x) = \min_{1\leq i\leq T}\left\{\Psi_i(x) + \frac{1}{2}x^{\top}P_ix + q_i^{\top}x+s_i \right \},
\end{equation}
where $\Psi_i:\mathbb{R}^n\to(-\infty,+\infty]$ are proper closed polyhedral functions, $P_i\in\mathbb{R}^{n\times n}$ are symmetric, $q_i\in\mathbb{R}^n$ and $s_i\in\mathbb{R}$ for $i\in\mathbb{N}_T$ and some $T\in\mathbb{N}$.
Recall that a function $\varphi:\mathbb{R}^n \to (-\infty,+\infty]$ is said to be a polyhedral function if its epigraph $\epi(\varphi):=\{(x,\alpha)\in\mathbb{R}^{n+1}:\varphi(x)\leq \alpha  \} $ is a polyhedral set, and a set $\mathcal{S}\subseteq \mathbb{R}^n$ is said to be polyhedral if there exists a finite set $\mathcal{A}\subseteq \mathbb{R}^{n+1}$ such that $\mathcal{S} = \bigcap_{a\in\mathcal{A}} \{x\in\mathbb{R}^n:\innerP{a}{(x,1)}\leq 0  \} $. For the case when $f$ has the form \eqref{eq: z03071436}, the next proposition demonstrates that the KL exponents of $Q$ and $F$ are $1/2$ under some further requirements. 


\begin{proposition}\label{proposition: KL=1/2 when g is polyhedral}
	Suppose that $f$ can be formulated as \eqref{eq: z03071436}, $h$ is a quadratic function, and one of the following assumption on $g$ holds:
	\begin{enumerate}[label = {\upshape(\roman*)}, leftmargin=0.5cm, itemindent=0cm]
		\item $g$ is a polyhedral function;
		\item $g$ is a positive semi-definite quadratic function, that is, $g(x) = \frac{1}{2}x^{\top}Px+q^{\top}x+s$ for some $q\in\mathbb{R}^n$, $s\in\mathbb{R}$, and symmetric positive semi-definite matrix $P\in\mathbb{R}^{n\times n}$.
	\end{enumerate}
	Then the function $F$ given by \eqref{definition: function F} and the function $Q$ given by \eqref{definition: function Q} are both KL functions with the exponent $\theta = 1/2$.
\end{proposition}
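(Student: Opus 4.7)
The plan is to reduce the KL-exponent estimate for both $F$ and $Q$ to the classical result of Li and Pong on minima of polyhedral-plus-quadratic functions. More precisely, \Cref{proposition: KL exponents psi_alpha to Q} transfers the KL exponent $\tfrac12$ from the auxiliary function $\psi_{\alpha}$ to $Q$ (once we verify calmness of $g^{*}$ at the relevant point), and \Cref{theorem: KL exponents Q to F} then transfers it from $Q$ to $F$. Thus the entire proof reduces to \emph{(a)} establishing calmness of $g^{*}$ on $\dom(g^{*})$ and \emph{(b)} showing that, for the relevant $\alpha \geq 0$, the auxiliary function $\psi_{\alpha}$ fits the template of \cite[Corollary~5.2]{Li-Pong:2018Foundations_of_computational_mathematics}, namely a pointwise minimum of finitely many polyhedral-plus-quadratic functions.

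For step \emph{(a)}, in case \StatementNum{1} the conjugate $g^{*}$ of a polyhedral $g$ is again polyhedral, and polyhedral convex functions are locally Lipschitz continuous on their domains, hence calm. In case \StatementNum{2} a direct Fenchel computation gives
\begin{equation*}
g^{*}(y) \;=\; \tfrac12 (y-q)^{\top} P^{\dagger} (y-q) - s \;+\; \iota_{q+\mathrm{range}(P)}(y),
\end{equation*}
where $P^{\dagger}$ denotes the Moore--Penrose pseudoinverse. This is the sum of a continuous quadratic function with the indicator of an affine subspace, so it is locally Lipschitz continuous on $\dom(g^{*})$ and hence calm. In both cases $g^{*}$ decomposes as a polyhedral piece plus a quadratic piece, which is exactly what we need below.

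For step \emph{(b)}, write $h(x) = \tfrac12 x^{\top} H x + c^{\top} x + d$ with $H$ symmetric. Since $f$ is of the form \eqref{eq: z03071436}, we obtain
\begin{equation*}
\zeta(x) = \min_{1\le i\le T} \Bigl\{ \Psi_{i}(x) + \tfrac12 x^{\top}(P_{i}+H)x + (q_{i}+c)^{\top} x + (s_{i}+d) \Bigr\}.
\end{equation*}
Absorbing the polyhedral-plus-quadratic decomposition of $g^{*}(y)$ into each branch $i$, and viewing the bilinear coupling $-\alpha\innerP{x}{y}$ as the quadratic form on $\mathbb{R}^{n}\times\mathbb{R}^{n}$ with symmetric matrix $\bigl(\begin{smallmatrix}0 & -\alpha I\\ -\alpha I & 0\end{smallmatrix}\bigr)$, we arrive at
\begin{equation*}
\psi_{\alpha}(x,y) = \min_{1\le i\le T} \Bigl\{ \widetilde\Psi_{i}(x,y) + \tfrac12 (x,y)^{\top} \widetilde P_{i} (x,y) + \widetilde q_{i}^{\top}(x,y) + \widetilde s_{i} \Bigr\},
\end{equation*}
in which each $\widetilde\Psi_{i}$ is the block-separable sum of polyhedral functions in $x$ and $y$ (hence polyhedral on $\mathbb{R}^{2n}$) and each $\widetilde P_{i}$ is symmetric. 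By \cite[Corollary~5.2]{Li-Pong:2018Foundations_of_computational_mathematics}, $\psi_{\alpha}$ is then a KL function with exponent $\tfrac12$.

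The main obstacle is the bookkeeping in step \emph{(b)} for case \StatementNum{2}, where one must keep the indicator $\iota_{q+\mathrm{range}(P)}(y)$ packaged with the polyhedral block $\widetilde\Psi_{i}$ and the pseudoinverse-quadratic with $\widetilde P_{i}$ so that the Li--Pong template is genuinely satisfied. Once these verifications are done, \Cref{proposition: KL exponents psi_alpha to Q} gives the KL exponent $\tfrac12$ for $Q$ at every point of $\dom(\partial Q)$, and \Cref{theorem: KL exponents Q to F} (invoking the calmness of $g^{*}$ established in step \emph{(a)}) transfers the same exponent to $F$ at every point of $\dom(\partial F)$, completing the proof.
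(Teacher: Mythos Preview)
Your proposal is correct and follows essentially the same approach as the paper: compute $g^{*}$ explicitly in each case to verify calmness on $\dom(g^{*})$ and to exhibit it as polyhedral-plus-quadratic, then write $\psi_{\alpha}$ as a pointwise minimum of polyhedral-plus-quadratic functions on $\mathbb{R}^{2n}$, apply \cite[Corollary~5.2]{Li-Pong:2018Foundations_of_computational_mathematics}, and transfer the exponent $\tfrac12$ to $Q$ via \Cref{proposition: KL exponents psi_alpha to Q} and then to $F$ via \Cref{theorem: KL exponents Q to F}. The paper carries out the same steps, with the only cosmetic difference that it writes out the block matrix $\bigl(\begin{smallmatrix} P_i & -\alpha I_n\\ -\alpha I_n & \alpha P_*\end{smallmatrix}\bigr)$ explicitly and unifies cases \StatementNum{1} and \StatementNum{2} into a single formula $g^{*}(y)=\widehat{\Psi}_{*}(y)+\iota_{\mathcal{A}}(y)+\tfrac12 y^{\top}P_{*}y+q_{*}^{\top}y+s_{*}$.
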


\begin{proof}
	For Item \StatementNum{1}, it is revealed by \cite[Theorem 19.2]{Rockafellar:70} that $\dom(g^*)$ is polyhedral, and $g^* = \widehat{\Psi}_* + \iota_{\dom(g^*)}$ holds for some real-valued polyhedral function $\widehat{\Psi}_*$. As to Item \StatementNum{2}, we have $g^*(y) = \frac{1}{2}(y-q)^{\top}P^{\dagger}(y-q)-s + \iota_{\mathrm{Range}(P)+\{q \}}(y),$
	where $P^{\dagger}$ denotes the Moore-Penrose inverse of $P$. Generally speaking, either for Item \StatementNum{1} or Item \StatementNum{2}, the conjugate of $g$ takes the form
	$$g^*(y) = \widehat{\Psi}_*(y) + \iota_{\mathcal{A}}(y) + \frac{1}{2}\, y^{\top}P_*y + q^{\top}_* y + s_*, $$
	where $\widehat{\Psi}_*:\mathbb{R}^n\to\mathbb{R}$ is a real-valued polyhedral function, $\mathcal{A}\subseteq\mathbb{R}^n$ is a polyhedral set, $P_*\in\mathbb{R}^{n\times n}$ is symmetric, $q_*\in\mathbb{R}^{n}$ and $s_*\in\mathbb{R}$. Then, the auxiliary function introduced in \Cref{proposition: KL exponents psi_alpha to Q} can be rewritten as
	\begin{multline*}
		\psi_{\alpha}(x,y) =\\ \min_{1\leq i\leq T}
		\left\{ 
		\frac{1}{2}\Vxy^{\top} \begin{bmatrix} P_i & -\alpha I_n \\ -\alpha I_n & \alpha P_* \end{bmatrix} \Vxy 
		+ \begin{bmatrix} q_i \\ \alpha  q_*	\end{bmatrix}^{\top} \Vxy + \widetilde{s}_i +h(x) + \widetilde{\Psi}_i(x,y)	 \right\},
	\end{multline*}
	where $\alpha := Q(\overline{x},\overline{y})$ for the given $(\overline{x},\overline{y})\in\dom(\partial Q)$, $\widetilde{s}_i := s_i + \alpha s_*$, and $\widetilde{\Psi}_i(x,y):= \Psi_i(x) + \alpha\widehat{\Psi}_*(y) + \iota_{\mathcal{A}}(y) $ is a polyhedral function for each $i\in\mathbb{N}_T$. 
	Also by exploiting  \cite[Corollary 5.2]{Li-Pong:2018Foundations_of_computational_mathematics}, we derive that, for an arbitrary $\alpha\geq 0$, the $\psi_{\alpha}$ is a KL function with the exponent $\theta=1/2$. Note that $g^*$ satisfies the calmness condition on $\dom(g^*)$, since $g^*(y)$ takes the value by the real-valued convex function $\widehat{\Psi}_*(y) + y^{\top}P_*y + q^{\top}_* y + s_* $ for any $y\in\dom(g^*)$. Therefore, we deduce that $Q$ is a KL function with the exponent $\theta=1/2$ with the help of \Cref{proposition: KL exponents psi_alpha to Q}, and $F$ is a KL function with the same exponent $\theta=1/2$ with the help of \Cref{theorem: KL exponents Q to F}. 
\end{proof}

We notice that the vector $K$-norm is a polyhedral function, since
\begin{multline*}
	\epi(\|\cdot\|_{(K)}) = \{(x,\alpha)\in\mathbb{R}^{n+1}: \|x\|_{(K)}\leq \alpha \} \\
	= \bigcap_{a\in\mathcal{A}}\{(x,\alpha)\in\mathbb{R}^{n+1}:a^{\top}x\leq \alpha \}
	= \bigcap_{a\in\mathcal{A}}\left\{(x,\alpha)\in\mathbb{R}^{n+1}:\innerP{
		\begin{bmatrix}
			a\\-1
	\end{bmatrix}}
	{\begin{bmatrix}
			x\\ \alpha
	\end{bmatrix}} \leq 0 \right\},
\end{multline*}
where $\mathcal{A}=\{a\in\{-1,0,1 \}^n:\|a\|_0 = K \}$ is a finite set. Thanks to \Cref{proposition: KL=1/2 when g is polyhedral}, we conclude that the extended objective $F$ and $Q$ for problem \eqref{problem:L1dSK} are both KL functions with exponent $\frac{1}{2}$. By \Cref{theorem: CMPGA converges to critical} and \Cref{theorem: convergence rate for KL exponents}, it leads to the following convergence rate theorem concerning CMPGA\_ML for solving problem \eqref{problem:L1dSK}.

\begin{theorem}
	The solution sequence $\{ x^{(t)}:t\in\mathbb{N} \}$ generated by CMPGA\_ML for problem \eqref{problem:L1dSK} converges R-linearly to a critical point of $F$.
\end{theorem}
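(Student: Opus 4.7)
The plan is to assemble the R-linear convergence statement by combining the existing global convergence result for CMPGA\_ML with the KL-based rate theorem, once the $L_1/S_K$ model is verified to fall into the polyhedral-plus-quadratic setting that gives a KL exponent of $\tfrac{1}{2}$. Since Theorem~\ref{theorem: CMPGA converges to critical} already delivers convergence of $\{x^{(t)}\}$ to a critical point $x^{\star}$ of $F$ for problem~\eqref{problem:L1dSK}, the remaining work is purely a rate estimate, and the natural tool is Theorem~\ref{theorem: convergence rate for KL exponents}(ii), which produces R-linear convergence precisely when the KL exponent of $Q$ at relevant limit points is at most $\tfrac{1}{2}$.

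The key steps, in order, are as follows. First, I would check that Assumptions~\ref{Assumption: X is compact}--\ref{Assumption: g* calm} hold for model~\eqref{problem:L1dSK}: the level set $\mathcal{X}$ is compact because the numerator is always positive and the constraint $\underline{x}\le x\le\overline{x}$ is bounded (as noted right after Assumption~\ref{Assumption: X is compact}), $f$ is the sum of the $\ell_1$ norm and the indicator of a box, hence locally Lipschitz on $\mathcal{X}$, and $g^{*}$, being the indicator of the polytope $\{y:\|y\|_\infty\le 1,\ \|y\|_1\le K\}$, satisfies the calmness condition on its domain (it is actually constant there). Second, I would invoke Theorem~\ref{theorem: CMPGA converges to critical} to fix the limit $x^{\star}$ and observe that each corresponding accumulation point of $\{(x^{(k,0)},y^{(k,0)})\}$ lies in $\mathcal{S}$ by Theorem~\ref{theorem: subsequential convergent of MPGAc}(i). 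Third, I would establish that $Q$ satisfies the KL property with exponent $\tfrac{1}{2}$ at every $(x^{\star},y)\in\mathcal{S}$: since $f$ in \eqref{problem:L1dSK} is a sum of a polyhedral function (the $\ell_1$ norm plus a box indicator) and hence fits the template~\eqref{eq: z03071436}, $h(x)=\tfrac{\lambda}{2}\|Ax-b\|_2^2$ is quadratic, and the vector $K$-norm $g(x)=\|x\|_{(K)}$ is polyhedral (as shown via the explicit epigraph description just before the final theorem), Proposition~\ref{proposition: KL=1/2 when g is polyhedral}(i) yields exactly this conclusion. Fourth, I would apply Theorem~\ref{theorem: convergence rate for KL exponents}(ii) with $\theta=\tfrac{1}{2}$ to conclude that $\|x^{(t)}-x^{\star}\|_2\le c_1\tau^t$ for some $c_1>0$ and $\tau\in(0,1)$, which is R-linear convergence by definition.

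The potential obstacle is the KL hypothesis required by Theorem~\ref{theorem: convergence rate for KL exponents}, which asks for an exponent bound at every $(x^{\star},y)\in\mathcal{S}$ rather than just at $(x^{\star},y^{\star})$ for one specific $y^{\star}$. However, Proposition~\ref{proposition: KL=1/2 when g is polyhedral} is stated as a global conclusion that $Q$ is a KL function with exponent $\tfrac{1}{2}$ (at every point of $\dom(\partial Q)$), so this uniformity is automatic once we have checked that the polyhedral-plus-quadratic hypotheses of that proposition are met. Consequently no extra work on the KL side is needed, and the proof reduces to citing Theorem~\ref{theorem: CMPGA converges to critical}, Proposition~\ref{proposition: KL=1/2 when g is polyhedral}(i), and Theorem~\ref{theorem: convergence rate for KL exponents}(ii) in sequence, with the structural verifications above as the only genuine content.
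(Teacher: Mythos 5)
Your proposal is correct and follows essentially the same route as the paper: verify Assumptions \ref{Assumption: X is compact}--\ref{Assumption: g* calm} for problem \eqref{problem:L1dSK}, obtain the KL exponent $\tfrac{1}{2}$ of $Q$ from \Cref{proposition: KL=1/2 when g is polyhedral} (using that the vector $K$-norm is polyhedral, $f$ fits the template \eqref{eq: z03071436}, and $h$ is quadratic), and then combine \Cref{theorem: CMPGA converges to critical} with \Cref{theorem: convergence rate for KL exponents} \StatementNum{2} to get the R-linear rate. Your remark that the needed uniformity over $(x^{\star},y)\in\mathcal{S}$ is automatic because the proposition asserts $Q$ is a KL function with exponent $\tfrac{1}{2}$ everywhere on $\dom(\partial Q)$ is exactly the reading the paper relies on.
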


\section{Numerical experiments} \label{section: Numerical experiments}

In this section, we conduct some preliminary numerical experiments to test the performance of our proposed algorithms, namely, CMPGA and RMPGA. All the experiments are conducted in MATLAB R2022a on a desktop with an Intel Core i5-9500 CPU (3.00 GHz) and 16GB of RAM.

In our numerical tests, we focus on the two ratio regularized sparse signal recovery problems \eqref{problem:L1dL2} and \eqref{problem:L1dSK} with highly coherent matrices $A$, on which the standard $\ell_1$ sparse signal recovery model usually fails. Following the works of \cite{Rahimi-Wang-Dong-Lou:2019SIAM-SC,Wang-Yan-Rahimi-Lou:2020IEEE}, the matrix $A$ is generated by oversampled discrete cosine transform (DCT), i.e., $A=[a_1,a_2,...,a_n]\in\mathbb{R}^{m\times n}$ with
\begin{equation}\label{def: DCT-Matrix A}
	a_j = \frac{1}{\sqrt{m}}\cos\left( \frac{2\pi \omega j}{D} \right), ~j\in\mathbb{N}_n,
\end{equation}
where $\omega\in\mathbb{R}^m$ is a random vector following the uniform distribution in $[0,1]^m$ and $D>0$ is a parameter measuring how coherent the matrix is. The parameters of the proposed algorithms are set as follows. We set $M=2$, $\sigma = 10^{-6}$, $\gamma = 0.5$, $\widetilde{\alpha}_Y \equiv 1000$ and $\overline{\alpha}=10^8$ throughout the experiments. Motivated from Barzilai-Borwein spectral method \cite{Barzilai-Borwein:IJNA1998}, $\widetilde{\alpha}_{(t)}$ is updated by the following formula:
\begin{equation}\label{eq: BB step-size}
	\widetilde{\alpha}_{(t)} = 
	\begin{cases}
		\max\left\{ \underline{\alpha}, \min \left\{\overline{\alpha},\frac{\|\Delta x^{(t)}\|^2_2}{|\innerP{\Delta x^{(t)}}{\Delta h^{(t)}} |}  \right\} \right\}, & \text{if  } \left|\innerP{\Delta x^{(t)}}{\Delta h^{(t)}} \right|\geq 10^{-12},\\
		\widetilde{\alpha}_{(t-1)}, &\text{else,}
	\end{cases}
\end{equation}
where $\Delta x^{(t)} := x^{(t)}-x^{(t-1)}$, $\Delta h^{(t)} := \nabla h(x^{(t)})-\nabla h(x^{(t-1)}) $. Moreover, we set $\widetilde{\alpha}_{(0)} = 1$ and $\underline{\alpha} = 10^{-8}$ in \eqref{eq: BB step-size} for solving problem \eqref{problem:L1dL2}, while both $\widetilde{\alpha}_{(0)}$ and $\underline{\alpha}$ are chosen as $1.99/(\lambda\|A\|_2^2)$ for problem \eqref{problem:L1dSK} due to the global Lipschitz differentiability of $h$ involved. Besides, the index $i\in\mathbb{N}_N^0$ is picked according to the uniform probability distribution in Step 2 of RMPGA, i.e., $p_j^{(t)}\equiv 1/(N+1)$ holds for any $j\in\mathbb{N}_N^0$ and $t\in\mathbb{N}$. 

\subsection{$L_1/S_K$ sparse signal recovery problem \eqref{problem:L1dSK}}\label{section: Exp-L1dSK}

We first generate random instances of problem \eqref{problem:L1dSK}, following similar experimental setting to those of \cite[Section 5]{Li-Shen-Zhang-Zhou:2022ACHA}. Given $m\in\mathbb{N}$, $n\in\mathbb{N}$ and $D>0$, the matrix $A$ is simulated as \eqref{def: DCT-Matrix A}. The ground truth $x^{\dagger}\in\mathbb{R}^n$ is an $r$-sparse signal that has a minimum separation of $2D$ in its support. Moreover, the nonzero entries of $x^{\dagger}$ are set as $1$ or $-1$ according to two-point distribution. We compute $b\in\mathbb{R}^m$ by $b=Ax^{\dagger}$ and choose $\underline{x} = -2\times \One_n$ and $\overline{x} = 2\times \One_n$, where $\One_n$ denotes the $n$-dimensional vector with all entries being 1. Now we get an instance of problem \eqref{problem:L1dSK} of which $x^{\dagger}$ is a critical point as shown in \cite[Section 5]{Li-Shen-Zhang-Zhou:2022ACHA}. 

We compare the proposed algorithms with PGSA\_NL \cite{NaZhang-QiaLi:2022SIAM-OPT} and PGSA\_BE \cite{Li-Shen-Zhang-Zhou:2022ACHA} for solving problem \eqref{problem:L1dSK}. The parameters of these two algorithms are set as suggested in \cite[Section 5]{Li-Shen-Zhang-Zhou:2022ACHA}. Note that the proposed algorithms involve the proximity operator with respect to the conjugate function of $g:=\|x\|_{(K)}$, which can be evaluated using the fact $\prox_{\alpha g^*}(z) = z-\alpha\prox_{g/\alpha}(\frac{z}{\alpha})$, for $\alpha>0$ and $z\in\mathbb{R}^n$. In our implementations, we adopt Algorithm 3 in \cite{Bogdan-Berg-Sabatti-Su-Candes:2015SLOPE} to compute $\prox_{\|\cdot\|_{(K)}}$, of which the computational complexity is $O(n\log n + n)$. Throughout the experiments, we fix $m=640$, $n=5400$, $r=100$, $\lambda = 200$, and vary the coherent parameter $D$ from $\{1,2,...,9,10 \}$. For each $D$, we generate 50 instances randomly as described above. For each instance, the same initial point $x^{(0)} = x^{\dagger}+0.2\widetilde{e}$ is selected for all the algorithms, where the entries of $\widetilde{e}\in\mathbb{R}^n$ are drawn randomly from the uniform distribution on $[-1,1]^n$. Moreover, all the algorithms are terminated when they reach the precision  
\begin{equation}\label{eq: z0730 1610}
	\frac{\|x^{(k,0)}-x^{\dagger}\|_2}{\|x^{\dagger}\|_2} < 10^{-3}.
\end{equation}

\Cref{fig:blocksvstime} shows a study of the performance of the proposed algorithms with different number of blocks $N$. Figures \ref{fig:blocksvstime}(a) and \ref{fig:blocksvstime}(b) respectively plot the averaged number of epochs ($n/(N+1)$ iterations) and computational time over 50 instances with $D=10$, when CMPGA and RMPGA use various $N$ ranging from 1 to 40. As we can see from \Cref{fig:blocksvstime}(a), the number of epochs required decreases when the number of blocks is increased, which indicates that using larger the number of blocks is more efficient in terms of the overall computation work (e.g., measured in total flops). However, this does not mean shorter computational time. One can observe from \Cref{fig:blocksvstime}(b) that using an appropriate number of blocks may take the least amount of computational time, which is not surprising since less number of blocks may  benefit better from modern multi-core computers for parallel computing and thus help to reduce the actual computational time.

\Cref{Table: L1dSk-Epoch} summarizes the results averaged over 50 instances with $D$ for PGSA\_NL, PGSA\_BE, CMPGA with $N=1$ and $8$, and RMPGA with $N=8$. For each $D$, the averaged number of epochs and computational time (in seconds) for these algorithms are presented in Tables \ref{Table: L1dSk-Epoch} and \ref{Table: L1dSk-time}, respectively. We can find that CMPGA with $N=1$ performs comparably to PGSA\_NL and PGSA\_BE, while CMPGA and RMPGA with $N=8$ outperform the other algorithms.


\begin{table}[htbp]
	\centering
	\caption{Results for averaged epochs for solving problem \eqref{problem:L1dSK}.}
	\begin{tabular}{llrrrrr}
		\toprule
		\raisebox{-3pt}[0pt]{$D$} & ~~~~~ & \raisebox{-3pt}[0pt]{PGSA\_NL} & \raisebox{-3pt}[0pt]{PGSA\_BE} & $\mathop{\mathrm{CMPGA}}\limits_{\tiny(N=1)}$ & $\mathop{\mathrm{CMPGA}}\limits_{(N=8)}$ & $\mathop{\mathrm{RMPGA}}\limits_{(N=8)}$ \\
		\midrule
		1  &   & 170   & 217   & 163   & 65    & 81  \\
		2  &   & 171   & 217   & 163   & 64    & 81  \\
		3  &   & 187   & 226   & 178   & 64    & 86  \\
		4  &   & 199   & 236   & 191   & 71    & 94  \\
		5  &   & 212   & 242   & 203   & 82    & 107  \\
		6  &   & 224   & 251   & 215   & 93    & 120  \\
		7  &   & 236   & 262   & 225   & 105   & 134  \\
		8  &   & 247   & 272   & 235   & 121   & 145  \\
		9  &   & 257   & 285   & 244   & 133   & 155  \\
		10 &   & 267   & 295   & 253   & 148   & 165  \\
		\bottomrule
	\end{tabular}%
	\label{Table: L1dSk-Epoch}%
\end{table}%

\begin{table}[htbp]
	\caption{Results for averaged computational time for solving problem \eqref{problem:L1dSK}.}
	\begin{tabular}{llrrrrr}
		\toprule
		\raisebox{-3pt}[0pt]{$D$} & ~~~~~ & \raisebox{-3pt}[0pt]{PGSA\_NL} & \raisebox{-3pt}[0pt]{PGSA\_BE} & $\mathop{\mathrm{CMPGA}}\limits_{\tiny(N=1)}$ & $\mathop{\mathrm{CMPGA}}\limits_{(N=8)}$ & $\mathop{\mathrm{RMPGA}}\limits_{(N=8)}$ \\
		\midrule
		1  &   & 0.379  & 0.437  & 0.485  & 0.200  & 0.210  \\
		2  &   & 0.370  & 0.418  & 0.466  & 0.187  & 0.205  \\
		3  &   & 0.402  & 0.435  & 0.508  & 0.188  & 0.213  \\
		4  &   & 0.427  & 0.455  & 0.543  & 0.205  & 0.231  \\
		5  &   & 0.455  & 0.464  & 0.574  & 0.227  & 0.255  \\
		6  &   & 0.484  & 0.483  & 0.619  & 0.258  & 0.289  \\
		7  &   & 0.501  & 0.500  & 0.629  & 0.282  & 0.310  \\
		8  &   & 0.527  & 0.517  & 0.657  & 0.316  & 0.335  \\
		9  &   & 0.552  & 0.547  & 0.686  & 0.346  & 0.366  \\
		10 &   & 0.581  & 0.573  & 0.713  & 0.384  & 0.378  \\
		\bottomrule
	\end{tabular}%
	\label{Table: L1dSk-time}%
\end{table}%



\begin{figure}[h]
	\centering
	\caption{Performance of CMPGA and RMPGA with different number of blocks.}
	\subfigure[Epochs versus block number.]{\includegraphics[width=0.49\linewidth]{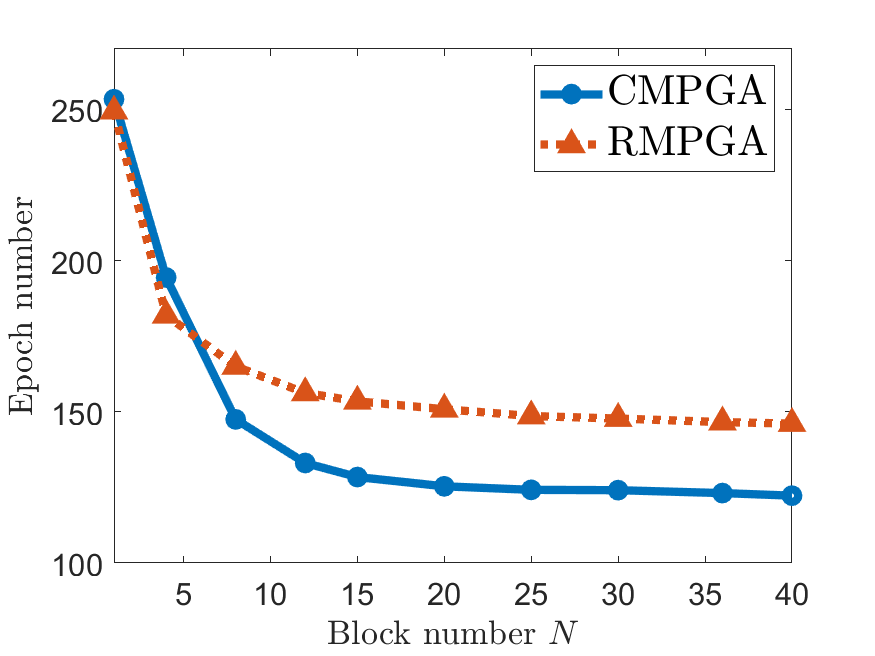}}
	\subfigure[Computational time versus block number.]{\includegraphics[width=0.49\linewidth]{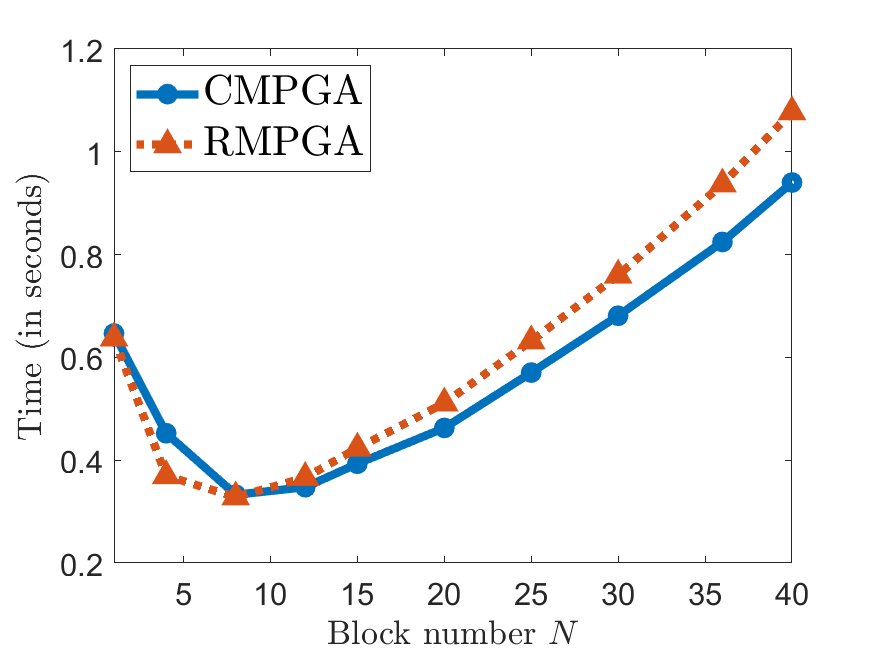}}
	\label{fig:blocksvstime}
\end{figure}

\subsection{$L_1/L_2$ sparse signal recovery problem \eqref{problem:L1dL2}}\label{section: Exp-L1dL2}

This subsection is devoted into numerical experiments on the proposed algorithms for solving problem \eqref{problem:L1dL2}. First, we describe how to construct an instance of problem \eqref{problem:L1dL2} in our test. Given positive integers $m,n$ and a positive number $D$, a matrix $A\in\mathbb{R}^{m\times n}$ is produced by \eqref{def: DCT-Matrix A}. As suggested in \cite{Wang-Yan-Rahimi-Lou:2020IEEE}, the ground truth $x^{\dagger}\in\mathbb{R}^n$ is simulated as an $r$-sparse signal, which has a minimum seperation of at least $2D$ in its support. Besides, we require the dynamic range of $x^{\dagger}$, which is defined by the ratio of the maximum and minimum in the absolute values of its nonzero entries, to be 1000. In our implementations, the nonzero entries of $x^{\dagger}$ is generated by the following MATLAB command:
\begin{equation*}
	\texttt{sign(randn(mu,1)).*10.\^{}(3*rand(mu,1)); }
\end{equation*}
Accordingly, we set $\underline{x} = -1000\times \One_n$ and $\overline{x} = 1000\times \One_n$ in problem \eqref{problem:L1dL2}, where $\One_n$ denotes the $n$-dimensional vector with all entries being 1. Given $A,x^{\dagger},\underline{x},\overline{x}$ and a model parameter $\lambda>0$, we can construct a vector $b\in\mathbb{R}^n$ such that $x^{\dagger}$ is a critical point of problem \eqref{problem:L1dL2} following a similar way to the work of \cite[Section 6.1]{TaoMin:2022SIAM-SC}. Now we obtain a concrete instance of problem \eqref{problem:L1dL2} with $x^{\dagger}\in\mathbb{R}^n$ being one of its critical points.

Next we give the initial point and the stopping criteria for the proposed algorithms. Recall that to satisfy \Cref{Assumption: X is compact}, the initial point $x^{(0)}$ for the proposed algorithms can be selected such that $F(x^{(0)})< 1+\frac{\lambda}{2}\|b\|_2^2$. For $j\in\mathbb{N}_n$, we denote by $e_j \in \mathbb{R}^n$ the one-sparse vector whose $j$-th entry is 1. In view of $-\underline{x} = \overline{x} = 1000\times \One_n$, it is easy to verify that for any $j\in\mathbb{N}_n$ with $a^{\top}_j b \neq 0$, $\mathrm{sign}(a_j^{\top}b)\min\{|a_j^{\top}b|/ \|a_j\|^2_2,\overline{x} \} e_j$ satisfies the aforementioned condition required for $x^{(0)}$. In our tests, we simply find $j_0:=\min \{j\in\mathbb{N}_n: a^{\top}_j b \neq 0 \}$ and set the initial point $x^{(0)} =  \mathrm{sign}(a_{j_0}^{\top}b)\min\{|a_{j_0}^{\top}b|/ \|a_{j_0}\|^2_2,\overline{x} \} e_{j_0}$ for all the proposed algorithms. Furthermore, we terminate the proposed algorithms when 
\begin{equation}\label{eq: L1dL2 Exp-termination}
	\frac{\dist\left(0,\partial Q(x^{(t)},y^{(t)})\right)}{\|(x^{(t)},y^{(t)}) \|_2} < 10^{-7}.
\end{equation}
Below we show that for problem \eqref{problem:L1dL2} the distance $\dist\left(0,\partial Q(x^{(t)},y^{(t)})\right)$ at any $(x,y)\in\dom(Q)$ can be computed in an explicit way. Let $\mathcal{C}$ denote the box $\{x\in\mathbb{R}^{n}: \underline{x}\leq x \leq \overline{x}  \}$. In the case of $\zeta(x)>0$, by \Cref{Corollary: limiting subdiff of frac} \StatementNum{2}, we see that
\begin{equation}\label{eq: z1017 2152}
	\begin{split}
		\dist^2(0,\partial Q(x,y)) = \eta^{-2}(x,y)
		\Big ( &\dist^2(Q(x,y)y-\nabla h(x),\partial \iota_{\mathcal{C}}(x) + \partial\|\cdot\|_1(x))\\
		&+Q^2(x,y)\dist^2(x,\partial(\|\cdot\|_2)^*(y)) \Big ).
	\end{split}
\end{equation}
Note that $\partial \iota_{\mathcal{C}}(x) + \partial\|\cdot\|_1(x)$ is a cuboid in $\mathbb{R}^n$, while $\partial(\|\cdot\|_2)^*(y) = \{0_n \} $ for $\|y\|_2< 1$ and $\partial(\|\cdot\|_2)^*(y) \ \{\beta y:\beta\geq 0\} $ for $\|y\|_2= 1$. Hence, $\dist\left(0,\partial Q(x,y)\right)$ can be directly evaluated via \eqref{eq: z1017 2152} if $\zeta(x)>0$. On the other hand, if $\zeta(x)=0$, it is trivial that $Q$ attains its global minimum at $(x,y)$ and thus $\dist\left(0,\partial Q(x,y)\right) = 0$.

We compare the proposed algorithms with the alternating direction method of multipliers (ADMM) for solving problem \eqref{problem:L1dL2}, which is recently developed in \cite{Wang-Yan-Rahimi-Lou:2020IEEE}. Followig the notations and suggestions in \cite[Section 5]{Wang-Yan-Rahimi-Lou:2020IEEE}, we set $\rho_1=\rho_2=\beta=1$, and choose $\overline{\epsilon} = 10^{-6}$ and $\texttt{j\_max} = 5$ for the inner iterations of ADMM. Besides, the initial point and stopping criteria of ADMM are set as the same as those of the proposed algorithms. In particular, for ADMM, we set $y^{(t)} := \nabla g(x^{(t)}) = \frac{x^{(t)}}{\|x^{(t)}\|_2}$ to verify the stopping condition \eqref{eq: L1dL2 Exp-termination}.

In our experiments, we set $(m,n,r) = (64R,540R,\widetilde{r}R)$ with $\widetilde{r}\in\{6,8,10,12,14  \}$ and $R\in\{8,10,12  \}$. Also, we fix the coherent parameter $D=1$ and the model parameter $\lambda = 2\times 10^{-4}$. For each triple $(m,n,r)$, we generate 50 instances of problem \eqref{problem:L1dL2} as described above. Then we run ADMM, RMPGA with the block number $N=8$, as well as CMPGA with $N=1$ and $8$, for solving the above instances of problem \eqref{problem:L1dL2}. The computational results averaged over the 50 instances with the same $(m,n,r)$ are presented in Tables \ref{Table: L1dL2-Obj} and \ref{Table: L1dL2-time}, which correspond to the objective values and computational time\footnote{The computational time reported does not include the time for verifying the stopping criteria \eqref{eq: L1dL2 Exp-termination}.} (in seconds), respectively. One can observe that the proposed algorithms significantly outperform ADMM in terms of CPU time, while the objective values found by the tested algorithms are comparable. Also, CMPGA with $N=8$ takes about one half of the CPU time needed by the ones with $N=1$ to achieve the same accuracy, which tells that properly exploiting the block separable structure of $f$ can help to improve the computational efficiency. Moreover, we observe that the deterministic algorithm (CMPGA with $N=8$) slightly outperform its randomized counterpart (RMPGA with $N=8$), which is reasonable as a similar observations have been found in the literature, see \cite[Section 3.2]{Amir-Luba:2013SIAM-opt}, for example.

\begin{table}[htbp]
	\centering
	%
	\caption{Results for objective values for solving problem \eqref{problem:L1dL2}.}
	\label{Table: L1dL2-Obj}
	\begin{tabular}{lrrrrr}
		\toprule
		\raisebox{-3pt}[0pt]{$(m,n,r)$} &   ~~~~~~    & \raisebox{-3pt}[0pt]{ADMM}  & $\mathop{\mathrm{CMPGA}}\limits_{\tiny(N=1)}$ & $\mathop{\mathrm{CMPGA}}\limits_{(N=8)}$ & $\mathop{\mathrm{RMPGA}}\limits_{(N=8)}$ \\
		\midrule
		$(512,4320,48)$ &       & 84.35  & 84.35  & 84.35  & 84.35  \\
		$(512,4320,64)$ &       & 68.99  & 68.99  & 68.99  & 68.99  \\
		$(512,4320,80)$ &       & 73.51  & 73.51  & 73.51  & 73.51  \\
		$(512,4320,96)$ &       & 28.35  & 28.35  & 28.35  & 28.35  \\
		$(512,4320,112)$ &       & 24.90  & 24.90  & 24.90  & 24.90  \\
		&       &       &       &       &  \\
		$(640,5400,60)$ &       & 31.66  & 31.66  & 31.66  & 31.66  \\
		$(640,5400,80)$ &       & 36.73  & 36.73  & 36.73  & 36.73  \\
		$(640,5400,100)$ &       & 22.90  & 22.90  & 22.90  & 22.90  \\
		$(640,5400,120)$ &       & 22.53  & 22.53  & 22.53  & 22.53  \\
		$(640,5400,140)$ &       & 47.41  & 47.41  & 47.41  & 47.41  \\
		&       &       &       &       &  \\
		$(768,6480,72)$ &       & 19.15  & 19.15  & 19.15  & 19.15  \\
		$(768,6480,96)$ &       & 19.41  & 19.41  & 19.41  & 19.41  \\
		$(768,6480,120)$ &       & 13.73  & 13.73  & 13.73  & 13.73  \\
		$(768,6480,144)$ &       & 21.00  & 21.00  & 21.00  & 21.00  \\
		$(768,6480,168)$ &       & 24.60  & 24.60  & 24.60  & 24.60  \\
		\bottomrule
	\end{tabular}%
\end{table}%

\begin{table}[htbp]
	\centering
	\caption{Results for computational time for solving problem \eqref{problem:L1dL2}.}
	\label{Table: L1dL2-time}%
	\begin{tabular}{lrrrrr}
		\toprule
		\raisebox{-3pt}[0pt]{$(m,n,r)$} &   ~~~~~~    & \raisebox{-3pt}[0pt]{ADMM}  & $\mathop{\mathrm{CMPGA}}\limits_{\tiny(N=1)}$ & $\mathop{\mathrm{CMPGA}}\limits_{(N=8)}$ & $\mathop{\mathrm{RMPGA}}\limits_{(N=8)}$ \\
		\midrule
		$(512,4320,48)$ &       & 3.94  & 0.16  & 0.12  & 0.12  \\
		$(512,4320,64)$ &       & 4.24  & 0.15  & 0.09  & 0.14  \\
		$(512,4320,80)$ &       & 4.67  & 0.16  & 0.10  & 0.15  \\
		$(512,4320,96)$ &       & 5.64  & 0.21  & 0.11  & 0.19  \\
		$(512,4320,112)$ &       & 6.82  & 0.27  & 0.15  & 0.22  \\
		&       &       &       &       &  \\
		$(640,5400,60)$ &       & 6.58  & 0.23  & 0.13  & 0.17  \\
		$(640,5400,80)$ &       & 7.23  & 0.31  & 0.16  & 0.21  \\
		$(640,5400,100)$ &       & 7.99  & 0.31  & 0.15  & 0.24  \\
		$(640,5400,120)$ &       & 9.48  & 0.37  & 0.16  & 0.29  \\
		$(640,5400,140)$ &       & 11.17  & 0.48  & 0.23  & 0.39  \\
		&       &       &       &       &  \\
		$(768,6480,72)$ &       & 9.90  & 0.32  & 0.16  & 0.24  \\
		$(768,6480,96)$ &       & 10.92  & 0.44  & 0.22  & 0.29  \\
		$(768,6480,120)$ &       & 12.80  & 0.72  & 0.37  & 0.38  \\
		$(768,6480,144)$ &       & 15.00  & 0.63  & 0.28  & 0.57  \\
		$(768,6480,168)$ &       & 18.14  & 0.82  & 0.39  & 0.63  \\
		\bottomrule
	\end{tabular}%
\end{table}%





\backmatter

%
%
%
%
\bmhead{Acknowledgments}

We would like to thank Prof. Lixin Shen for his helpful discussions and valuable suggestions.


\section*{Declarations}


\begin{itemize}
\item Funding: The work of Na Zhang
was supported in part by the National Natural Science Foundation of China grant number
12271181, by the Guangzhou Basic Research Program grant number 202201010426
and by the Basic and Applied Basic Research Foundation of Guangdong Province grant number
2023A1515030046.
The work of Qia Li was supported in part by
the National Natural Science Foundation of China grant 11971499 and the Guangdong
Province Key Laboratory of Computational Science at the Sun Yat-sen University
(2020B1212060032). 
\end{itemize}


%
%
%
%
%

\begin{appendices}
	\section{The proof of Proposition \ref{proposition: KL convergence framework}}
	\label{Appendix: the proof of KL convergence}
	\begin{proof}
		For convenience, we let
		\begin{equation*}
			d_k := \|x^{k+1}-x^k\|_2^2 + a_{k}\|y^{k+1}-y^k\|^2_2
			\text{~~and~~}
			\gamma_k := \phi(H(x^k,y^k)-\xi).
		\end{equation*}
		It follows from Condition \StatementNum{1} that $\{H(x^k,y^k):k\in\mathbb{N} \}$ is non-increasing. This together with Condition \StatementNum{3} implies $H(x^k,y^k) \geq \xi$ for any $k\in\mathbb{N}$. If $H(x^{k_0},y^{k_0}) = \xi$ for some $k_0\in\mathbb{N}$, Condition \StatementNum{1} yields that $H(x^k,y^k) = \xi$ and $d_k = 0$ hold for any $k\geq k_0$. Then, we obtain this proposition from Condition \StatementNum{2} immediately. Hence, we only need to consider the case when
		\begin{equation}\label{eq: z0330 2104}
			H(x^k,y^k) > \xi
		\end{equation}
		holds for any $k\in\mathbb{N}$.
		
		We first show that there exist $K_3\geq 0$ and a continuous concave function $\phi:[0,\epsilon) \to \mathbb{R}_+ $ satisfying Items \StatementNum{1}-\StatementNum{2} in \Cref{Def:KL_property} such that there holds, for $k\geq K_3$,
		\begin{equation}\label{eq: z0330 2101}
			\phi'(H(x^k,y^k)-\xi)\mathrm{~dist}(0,\partial H(x^k,y^k)) \geq 1.
		\end{equation}
		Since $\{(x^k,y^k):k\in\mathbb{N} \}$ is bounded, the set $\Upsilon$ is compact. In view of Condition \StatementNum{3} and \eqref{eq: z0330 2104}, for any $\delta>0$ and $\epsilon>0$, there exists $K_3>0$ such that
		\begin{displaymath}
			(x^k,y^k) \in \{(x,y)\in\mathbb{R}^n\times \mathbb{R}^m: \dist((x,y),\Upsilon)<\delta,~\xi<H(x,y)<\xi+\epsilon \}
		\end{displaymath}
		holds for any $k\geq K_3$. Hence, by invoking \Cref{lemma: Uniformized KL property} \StatementNum{1}, we deduce that \eqref{eq: z0330 2101} holds for any $k\geq K_3$.
		
		Then, for $k\geq K := \max\{K_1,K_2,K_3 \}$, we have
		\begin{align}
			\sqrt{d_k} \notag
			&\overset{(\uppercase\expandafter{\romannumeral 1})}{\leq}
			\sqrt{\frac{1}{C_1}(H(x^k,y^k)-H(x^{k+1},y^{k+1}))}\\
			&\overset{(\uppercase\expandafter{\romannumeral 2})}{\leq}  \sqrt{\frac{1}{C_1}(\gamma_k-\gamma_{k+1})/\phi'(H(x^k,y^k)-\xi )}\notag \\
			&\overset{(\uppercase\expandafter{\romannumeral 3})}{\leq}  \sqrt{\frac{2C_2}{C_1}(\gamma_k-\gamma_{k+1})\frac{1}{2}\left( \|x^k-x^{k-1}\|_2 + \sqrt{a_{k-1}} \|y^k-y^{k-1}\|_2 \right)  } \notag\\
			&\overset{(\uppercase\expandafter{\romannumeral 4})}{\leq}  \frac{C_2}{C_1}(\gamma_k-\gamma_{k+1}) +  \frac{1}{4}\left( \|x^k-x^{k-1}\|_2 + \sqrt{a_{k-1}}\|y^k-y^{k-1}\|_2 \right)  \notag\\
			&= \frac{C_2}{C_1}(\gamma_k-\gamma_{k+1}) +  \frac{1}{4}\sqrt{ d_{k-1}+2\sqrt{a_{k-1}}\|x^k-x^{k-1}\|_2\|y^k-y^{k-1}\|_2 }\notag\\
			&\overset{(\uppercase\expandafter{\romannumeral 5})}{\leq}  \frac{C_2}{C_1}(\gamma_k-\gamma_{k+1}) +  \frac{1}{4}\sqrt{ 2d_{k-1} } 
			\leq\frac{C_2}{C_1}(\gamma_k-\gamma_{k+1}) +  \frac{1}{2}\sqrt{ d_{k-1} },\label{eq: z1016 511}
		\end{align}
		where the inequality $(\uppercase\expandafter{\romannumeral 1})$ follows from Condition \StatementNum{1}; the inequality $(\uppercase\expandafter{\romannumeral 2})$ follows from the concavity of $\phi$; the inequality $(\uppercase\expandafter{\romannumeral 3})$ follows from \eqref{eq: z0330 2101} and Condition \StatementNum{2}; the inequalities $(\uppercase\expandafter{\romannumeral 4})$ and $(\uppercase\expandafter{\romannumeral 5})$ follow from the facts that $\sqrt{\alpha\beta}\leq \frac{\alpha+\beta}{2}$ and $2\alpha\beta\leq\alpha^2+\beta^2$ hold for any $\alpha,\beta>0$, respectively. By summing \eqref{eq: z1016 511} from $k=K+1$ to $k=J>K+1$, we have 
		\begin{equation*}
			\sum_{k=K+1}^{J}\sqrt{d_k} \leq \frac{C_2}{C_1}(\gamma_{K+1}-\gamma_{J+1}) + \frac{1}{2}\sum_{k=K}^{J-1}\sqrt{d_k}.
		\end{equation*}
		It follows that
		\begin{equation}\label{eq: z1016 2205}
			\frac{1}{2}\sum_{k=K+1}^{J-1}\sqrt{d_k} \leq \frac{C_2}{C_1}(\gamma_{K+1}-\gamma_{J+1})-\sqrt{d_J}+\frac{1}{2}\sqrt{d_K}\leq \frac{C_2}{C_1}\gamma_{K+1}+\frac{1}{2}\sqrt{d_K}.
		\end{equation}
		By passing to the limit with $J\to\infty$ on the both sides of \eqref{eq: z1016 2205}, we obtain $\sum_{k=0}^{+\infty}\sqrt{d_k} < +\infty$ and Item \StatementNum{1}. Item \StatementNum{2} is the direct result of Item \StatementNum{1}, and Item \StatementNum{3} follows from Item \StatementNum{1} and Condition \StatementNum{2}.  
	\end{proof}
\end{appendices}






\end{document}